\documentclass[a4paper,11pt]{amsart}
\usepackage{amsmath,  amsfonts, amssymb, amsthm, amscd}
\usepackage{graphicx}
\usepackage{enumerate}
\usepackage{float}
\usepackage{fontawesome5}
\usepackage{eso-pic}
\usepackage{charter}
\usepackage{url}
\usepackage{color}
\usepackage[utf8]{inputenc}
\usepackage{tikz}
\usetikzlibrary{arrows}
\usepackage[a4paper]{geometry}
\usepackage[colorlinks=true,linkcolor=blue,citecolor=magenta]{hyperref}
\usepackage{epsfig, enumerate}
\usepackage{graphicx}
\usepackage{enumerate}
\usepackage{booktabs}
\usepackage{dsfont}
\usepackage[utf8]{inputenc}
\usepackage[T1]{fontenc}
\usepackage{mathrsfs}
\usepackage{a4wide}
\usetikzlibrary{calc}
\usetikzlibrary{external}

\pdfoptionpdfminorversion=7
\setcounter{secnumdepth}{2}
\frenchspacing
\numberwithin{equation}{section}


\newtheorem{theorem}{Theorem}[section]
\newtheorem{lemma}[theorem]{Lemma}
\newtheorem{proposition}[theorem]{Proposition}
\newtheorem{corollary}[theorem]{Corollary}

\newtheorem{definition}[theorem]{Definition}


\newcommand{\mc}[1]{{\mathcal #1}}
\newcommand{\mf}[1]{{\mathfrak #1}}

\newcommand{\bb}[1]{{\mathbb #1}}

\newcommand{\<}{\langle}
\renewcommand{\>}{\rangle}

\renewcommand{\epsilon}{\varepsilon}

\def\centerarc[#1](#2)(#3:#4:#5){\draw[#1] ($(#2)+({#5*cos(#3)},{#5*sin(#3)})$) arc (#3:#4:#5);}
\newcommand{\pfrac}[2]{\genfrac{}{}{}{1}{#1}{#2}}

\newcommand{\bola}{\textrm{\Large\textperiodcentered}}

\newcommand\topo[2]{\genfrac{}{}{0pt}{}{#1}{#2}}





\renewcommand{\tilde}{\widetilde}          
\DeclareMathSymbol{\leqslant}{\mathalpha}{AMSa}{"36} 
\DeclareMathSymbol{\geqslant}{\mathalpha}{AMSa}{"3E} 
\DeclareMathSymbol{\eset}{\mathalpha}{AMSb}{"3F}     


\newcommand{\T}{\mathbb{T}}

\renewcommand{\epsilon}{\varepsilon}


\newenvironment{myenumerate}{
	\renewcommand{\theenumi}{\arabic{enumi}}
	\renewcommand{\labelenumi}{{\rm(\theenumi)}}
	\begin{list}{\labelenumi}
		{
			\setlength{\itemsep}{0.4em}
			\setlength{\topsep}{0.5em}
			\setlength\leftmargin{2.45em}
			\setlength\labelwidth{2.05em}
			\setlength{\labelsep}{0.4em}
			\usecounter{enumi}
		}
	}
	{\end{list}
}




\newcommand{\beq}{\begin{equation}}
\newcommand{\eeq}{\end{equation}}
\newcommand{\ba}{\begin{aligned}}
	\newcommand{\ea}{\end{aligned}}




\usetikzlibrary{shapes.misc}
\usetikzlibrary{shapes.symbols}
\usetikzlibrary{shapes.geometric}
\usetikzlibrary{decorations}
\usetikzlibrary{decorations.markings}

\tikzstyle{tinydots}=[dash pattern=on \pgflinewidth off 2*\pgflinewidth]


\setcounter{tocdepth}{4}
\setcounter{secnumdepth}{4}
\let\oldtocsection=\tocsection
\let\oldtocsubsection=\tocsubsection
\let\oldtocsubsubsection=\tocsubsubsection
\renewcommand{\tocsection}[2]{\hspace{0em}\oldtocsection{#1}{#2}}
\renewcommand{\tocsubsection}[2]{\hspace{1em}\oldtocsubsection{#1}{#2}}
\renewcommand{\tocsubsubsection}[2]{\hspace{2em}\oldtocsubsubsection{#1}{#2}}
\DeclareRobustCommand{\SkipTocEntry}[5]{}

\begin{document}

\title[Superdiffusive Scaling Limits for SEP Slow Bonds]{Superdiffusive Scaling Limits for the Symmetric\\ Exclusion Process with Slow Bonds}

\author{Dirk Erhard}
\address{UFBA\\
	Instituto de Matem\'atica, Campus de Ondina, Av. Adhemar de Barros, S/N. CEP 40170-110\\
	Salvador, Brazil}
\curraddr{}
\email{erharddirk@gmail.com}
\thanks{}

\author{Tertuliano Franco}
\address{UFBA\\
	Instituto de Matem\'atica, Campus de Ondina, Av. Adhemar de Barros, S/N. CEP 40170-110\\
	Salvador, Brazil}
\curraddr{}
\email{tertu@ufba.br}
\thanks{}

\author{Tiecheng Xu}
\address{UFBA\\
	Instituto de Matem\'atica, Campus de Ondina, Av. Adhemar de Barros, S/N. CEP 40170-110\\
	Salvador, Brazil}
\thanks{}
\curraddr{}
\email{xutcmath@gmail.com}

\subjclass[2010]{60K35}

\begin{abstract}
	In \cite{fgn1},  the hydrodynamic limit in the diffusive scaling of the symmetric simple exclusion process with a finite number of slow bonds of strength $n^{-\beta}$ has been studied. Here $n$ is the scaling parameter and $\beta>0$ is fixed. As shown in \cite{fgn1}, when $\beta>1$, such a limit is given by the heat equation with Neumann boundary conditions. In this work, we find more non-trivial super-diffusive scaling limits for this dynamics. Assume that there are $k$ equally spaced slow bonds in the system.
	If $k$ is fixed and the time scale is $k^2n^\theta$, with $\theta\in (2,1+\beta)$, the density is asymptotically  constant in each of the $k$ boxes, and equal to the initial expected mass in that box, i.e., there is no time evolution.  If $k$ is fixed and the time scale is $k^2n^{1+\beta}$, then  the density is also spatially constant in each box, but evolves in time according to the discrete heat equation. Finally, if the time scale is $k^2n^{1+\beta}$ and, additionally, the number of boxes $k$ increases to infinity, then the system converges to the  continuous heat equation on the torus, with no boundary conditions.
	\end{abstract}

\maketitle

\tableofcontents

\section{Introduction}\label{s1}
The subject of interacting particle systems involving boundary conditions and  how those boundary dynamics influence its macroscopic limits has attracted a lot of attention in recent decades. As just one example among many, Franco, Gonçalves, and Neumann studied the hydrodynamic limit of symmetric exclusion processes with a slow bond in \cite{fgn1}. The exclusion process is a well-known and widely studied model in Probability and Statistical Mechanics. It involves random walks with a hard-core interaction, meaning that each vertex of the graph can host at most one particle.

In a symmetric exclusion process, the jump rates are associated with the edges of the graph: when the Poisson clock of an edge rings, the particles at the two vertices connected by that edge swap places. If we consider as the underlying graph a discrete torus with
$n$ vertices, the slow bond corresponds to an edge whose jump rate is smaller than of the other bonds. More precisely, while most edges have a jump rate of one, the slow bond has a rate of $n^{-\beta}$, where $\beta\geq 0$ is fixed parameter.

Combining \cite{fgn1} and \cite{fgn2}, it follows that the hydrodynamic limit of the symmetric exclusion process exhibits a phase transition in $\beta$. If $\beta\in [0,1)$, the hydrodynamic limit is driven by the heat equation on the torus (that is, no boundary condition appears in the limit). If $\beta=1$, then the hydrodynamic equation is the heat equation with Robin boundary condition at the macroscopic point close to the slow bond. Moreover,  this Robin boundary condition can be interpreted as Fourier's Law at that point. Finally, if $\beta>1$, then the hydrodynamic equation is given by the heat equation with Neumann boundary conditions, meaning that the slow bond in this case is strong enough to create an impenetrable barrier in the limit.

In this paper, we investigate the case $\beta>1$ in more detail. We show the existence of three more scaling limits for this system. To be more precise, we consider not a single slow bond, but $k$ slow bonds of rate $n^{-\beta}$, $\beta>1$, equally spaced in the discrete torus with $nk$ sites. A \textit{box} refers to the $n$ sites between two consecutive slow bonds.

	If $k$ is fixed and the time scale  is  $k^2n^{2+\theta}$, with $\theta\in (0,\beta-1)$, then  the particle density evolution   is asymptotically  constant in each of the $k$ boxes, and equal to the initial mass in each box between two slow bonds;  that is, the equilibrium  is achieved instantaneously inside each box, and  there is  no sharing of mass between boxes (thus no time evolution).

	 If $k$ is fixed and the time scale is $k^2n^{1+\beta}$, then  the density is also spatially constant in each box, but now the boxes shares masses; furthermore  the vector of densities (of each box) evolves in time according to the discrete heat equation.

	 Finally, if the time scale is $k^2n^{1+\beta}$ and, additionally, the number of boxes $k$ increases to infinity, under a proper definition of the empirical measure, the system converges to the  continuous heat equation on the torus, with no boundary conditions. In other words, each box can be seen as a single site, those sites interact and its hydrodynamic limit is the same as the one of the homogeneous symmetric exclusion process.

	 It is worth mentioning the different approaches required here. For the case where $k$ is fixed, we follow Varadhan's Entropy Method, while for $k \to \infty $, we use the refined Yao's Relative Entropy Method, developed by Jara and Menezes~\cite{jm}. These two approaches were not merely a choice; as will become clear in the paper, when considering the time scale $k^2n^{1+\beta}$, the Replacement Lemma within a box (a fundamental component of the entropy method) only holds when $k $ does not grow too fast, namely, only if $k = o(n^{\frac{\beta-1}{2}}) $. On the other hand, as we will show, the refined Yao's Relative Entropy Method of Jara and Menezes works for any $k \to \infty$, but not when $k $ is fixed.

	 The paper is divided as follows: in Section~\ref{s2} we state results; in Section~\ref{s3} we deal with the scenario where $k$ is fixed by following the Varadhan's Entropy Method approach; in Section~\ref{s4} we deal with with the scenario where $k=k(n)$ converges to infinity by following the refined Yao's Relative Entropy Method of Jara and Menezes.

\section{Statement of results}\label{s2}

Denote by $\bb T = \bb R/\bb Z = [0, 1)$ the one-dimensional continuous torus and by $\bb T_{nk} = \bb Z/nk\bb Z =\{0,\ldots, nk - 1\}$ the one-dimensional discrete torus with $nk$ points,  where $k=k(n)$ will be specified later. Define also the state space $\Omega_{nk}:=\{0,1\}^{\mathbb{T}_{nk}}$.

Fix $\alpha>0$ and $\beta\geq 0$. We consider the \emph{exclusion process with slow bonds} which  is the Markov process $\{\eta^n_t:t\geq{0}\}$ whose infinitesimal generator $\mc{L}_{n}$ acts on local functions $f:\Omega_{nk}\rightarrow{\mathbb{R}}$ as
\begin{equation}\label{ln}
	(\mc{L}_{n}f)(\eta)=\sum_{x\in \bb T_{nk}}\,\xi^{nk}_{x,x+1}\,[f(\eta^{x,x+1})-f(\eta)]\,,
\end{equation}
where
$\eta^{x,x+1}$ is the configuration obtained from $\eta$ by exchanging the occupation variables $\eta(x)$ and $\eta(x+1)$, i.e.,
\begin{equation}\label{exchange}
	(\eta^{x,x+1})(y)=\left\{\begin{array}{cl}
		\eta(x),& \mbox{if}\,\,\, y=x+1\,,\\
		\eta(x+1),& \mbox{if} \,\,\,y=x\,,\\
		\eta(y),& \mbox{otherwise}
	\end{array}
	\right.
\end{equation}
the conductantes $\xi^{nk}_{x,x+1}$ are given by
\begin{equation*}
	\xi^{nk}_{x,x+1}\;=\;
	\begin{cases}
		\alpha n^{-\beta}\,,& \text{ if } x= jn-1 \text{ for } j\in\{1,2,\ldots,k\},\\
		1\,,& \text{ otherwise. }\\
	\end{cases}
\end{equation*}
Throughout this article we shall assume that $\beta>1$, and when we say that the process is \textbf{speeded up by a function} $f(n)$, we mean that we are considering the process under the generator $f(n) \mc L_n$.

Let $\mathcal{M}$ be the compact space of non-negative measures on $\bb T$ with total mass bounded by one endowed with the weak topology. Denote by $\Pi_t^n \in \mc M$ the usual empirical measure on $\bb T$ associated to the configuration $\eta_t$:
\begin{equation*}
	\Pi_t^n(du)\;=\; \Pi^n(\eta_t,du)\;:=\;\frac{1}{nk}\sum_{x\in\bb T_{nk}}\eta_t(x) \delta_{x/nk}(du)\,.
\end{equation*}
 We would like to investigate the scaling limit of $\Pi^n$. The behavior of the system we shall prove is the following: under the time scale $k^2n^{2+\theta}$ for any $\theta>0$, the process reaches equilibrium immediately inside each box, but the behavior exhibits a phase transition at $\theta=\beta-1$. Namely, the average occupations of boxes stay unchanged if $\theta<\beta-1$, and they evolve asymptotically according to a heat equation at the critical value $\theta=\beta-1$.
 In any case, the information that the process equilibrates immediately inside each box plays a fundamental role. This motivates the next definitions.
We first define the measure $\widetilde{\Pi}^n$ by
\begin{equation*}
	\widetilde{\Pi}_t^n(du)= \widetilde{\Pi}^n(\eta_t,du):=\frac{1}{nk}\sum_{i=0}^{k-1}\sum_{x=in}^{(i+1)n-1}\Big(\frac{1}{n}\sum_{y=in}^{(i+1)n-1}\eta_t(y)\Big)\delta_{x/nk}(du)\,.
\end{equation*}
That is, compared to the definition of $\Pi^n$, 
instead of assigning mass $\frac{1}{nk}\eta(x)$ at point $\frac{x}{nk}\in\bb T$, we assign $\frac{1}{nk}$ times the average occupation in the $i-$th box at this point. We shall show that $\Pi^n$ and $\widetilde{\Pi}^n$ are close in a  certain sense, which indicates an immediate mixing of the process inside each box.  It remains to derive the scaling limit of $\widetilde{\Pi}^n$. Since for $\widetilde{\Pi}^n$, what we are really interested in is the evolution of $\frac{1}{n}\sum_{y=in}^{(i+1)n-1}\eta_t(y)$,  thus instead of dealing with $\widetilde{\Pi}^n$, we consider the following simplified measure.

Define $\pi_t^n$ to be the averaged  empirical measure on $\bb T$ associated to the configuration $\eta_t$:
\begin{equation}\label{pidef}
	\pi_t^n(du)\;=\; \pi^n(\eta_t,du)\;:=\;\frac{1}{k}\sum_{i=0}^{k-1}\Big(\frac{1}{n}\sum_{x=in}^{(i+1)n-1}\eta_t(x)\Big)\delta_{i/k}(du)\,.
\end{equation}
Differently from the measure $\widetilde{\Pi}^n$, here we treat each box as a whole and assign to each point $i/k$ a random mass which is $1/k$ times the average  occupation in the $i$-th box.  Since there is at most one particle allowed at each site, both $\widetilde{\Pi}^n$ and $\pi^n$ are random elements of $ \mc M$.

Let $\gamma:\bb T\to\bb [0,1]$ be a continuous density profile and let $\mu^n$ be the sequence of Bernoulli product measures of slowly varying parameter associated to the profile $\gamma$, i.e.,:
\begin{equation}\label{mun}
	\mu^n\big\{\eta: \eta(x)=1\big\}\;=\;\gamma(x/kn)\,,
\end{equation}
for all $x\in\bb T_{kn}$. Assume that the process $\{\eta_t^n:t\geq 0\}$ starts from the measure $\mu^n$.

As we discussed above, instead of deriving the scaling limit of $\Pi^n$,  we will first show that $\Pi^n$ and $\widetilde{\Pi}^n$ are close with respect to the weak topology, then we obtain the scaling limit of $\pi^n$. In fact the process equilibrates immediately inside each box  under time scale $k^2 n^{2+\delta}$ for any $\delta>0$. This is the content of the next proposition.

\begin{proposition}[Immediate equilibrium inside each box]\label{mixing}
	\qquad Assume that $k$ is fixed or\break $k=k(n)\uparrow\infty$. Fix $\theta>0$ and speed up the process by $k^2n^{2+\theta}$. Then, for any $t\in[0,T]$ and any function $G\in C^{1,2}([0,T]\times \bb T)$,
	$$\limsup_{n\to\infty}\bb E_{\mu^n}\Big[\Big|\int_0^t \frac{1}{nk}\sum_{i=0}^{k-1}\sum_{x=in}^{(i+1)n-1}G_s\big(\pfrac{x}{nk}\big)\Big( \eta_s(x)\,-\,\frac{1}{n}\sum_{y=in}^{(i+1)n-1}\eta_s(y)\big) \Big)ds\Big|\Big]\;=\;0\,.$$
\end{proposition}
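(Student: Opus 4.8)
The plan is to run a Replacement-Lemma type estimate inside Varadhan's entropy method; the crucial ingredient is that the symmetric exclusion process on a segment of $n$ sites has spectral gap bounded below by $c\,n^{-2}$ for a universal constant $c>0$, uniformly over the number of particles. Write $B_i:=\{in,\ldots,(i+1)n-1\}$ for the $i$-th box, let $\nu=\nu_{1/2}$ be the uniform Bernoulli product measure (reversible for $\mc L_n$, hence for the sped-up generator $k^2n^{2+\theta}\mc L_n$), and write the integrand in the proposition as $V^n_s(\eta_s)$, where
\[
V^n_s(\eta)\;:=\;\frac1{nk}\sum_{i=0}^{k-1}\sum_{x\in B_i}G_s\big(\tfrac{x}{nk}\big)\Big(\eta(x)-\tfrac1n\sum_{y\in B_i}\eta(y)\Big)\;=\;\sum_{i=0}^{k-1}V^{n,i}_s(\eta),
\]
with $V^{n,i}_s(\eta)=\tfrac1{nk}\sum_{x\in B_i}\big(G_s(\tfrac{x}{nk})-\bar G^i_s\big)\eta(x)$ and $\bar G^i_s:=\tfrac1n\sum_{x\in B_i}G_s(\tfrac{x}{nk})$; thus $V^{n,i}_s$ depends on $\eta$ only through the coordinates in $B_i$. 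The key algebraic point is that the $\nu$-conditional expectation of $V^{n,i}_s$ given the number of particles in $B_i$ vanishes, because the conditional law of $(\eta(x))_{x\in B_i}$ given $\sum_{y\in B_i}\eta(y)=m$ is uniform on configurations with $m$ particles, under which each $\eta(x)$ has mean $m/n$; equivalently, $V^{n,i}_s$ is orthogonal in $L^2(\nu)$ to the kernel of the exclusion generator restricted to the internal edges of $B_i$, which is precisely what lets the spectral gap enter.

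First I would apply the entropy inequality relative to $\nu$ with scaling parameter $Bnk$, where $B=B_n>0$ is a constant to be fixed later:
\[
\bb E_{\mu^n}\Big[\Big|\int_0^t V^n_s\,ds\Big|\Big]\;\le\;\frac{H(\mu^n\,|\,\nu)}{Bnk}\;+\;\frac1{Bnk}\log\bb E_\nu\Big[\exp\Big(Bnk\,\Big|\int_0^t V^n_s\,ds\Big|\Big)\Big].
\]
Since $\mu^n$ is a product measure, $H(\mu^n\,|\,\nu)\le nk\log 2$, so the first term is at most $(\log 2)/B$. For the second term, use $e^{|a|}\le e^a+e^{-a}$ and the time-dependent Feynman--Kac inequality to reduce matters to bounding, for each $s\in[0,T]$,
\[
\sup_f\Big\{\pm\int V^n_s\,f\,d\nu\;-\;\frac{k^2n^{2+\theta}}{Bnk}\,\mc D_n(\sqrt f)\Big\},
\]
the supremum being over densities $f$ with respect to $\nu$ and $\mc D_n$ the Dirichlet form of $\mc L_n$. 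Then I drop the nonnegative slow-bond part of $\mc D_n$, use $\mc D_n\ge\sum_{i=0}^{k-1}\mc D_{B_i}$ with $\mc D_{B_i}$ the Dirichlet form of the exclusion dynamics on the internal edges of $B_i$, split the supremum over the boxes (legitimate since $V^{n,i}_s$ is supported on $B_i$), and invoke the standard entropy-method bound together with the conditional-mean-zero property and the segment spectral gap to obtain
\[
\sup_f\Big\{\pm\int V^n_s f\,d\nu-A\,\mc D_n(\sqrt f)\Big\}\;\le\;\frac{C_0}{A}\sum_{i=0}^{k-1}\big\langle V^{n,i}_s,(-\mc L_{B_i})^{-1}V^{n,i}_s\big\rangle_\nu\;\le\;\frac{C_0\,n^2}{c\,A}\sum_{i=0}^{k-1}\var_\nu\big(V^{n,i}_s\big),
\]
with $A:=\tfrac{k^2n^{2+\theta}}{Bnk}=\tfrac{kn^{1+\theta}}{B}$, $C_0$ universal, and the pseudo-inverse taken on the orthogonal complement of $\ker\mc L_{B_i}$.

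It remains to control $\var_\nu(V^{n,i}_s)$. Since $G\in C^{1,2}([0,T]\times\bb T)$ and $B_i$ occupies a macroscopic interval of length $1/k$, we have $|G_s(x/nk)-\bar G^i_s|\le k^{-1}\|\partial_u G\|_\infty$ for $x\in B_i$, so, the $\eta(x)$ being i.i.d. under $\nu$,
\[
\var_\nu\big(V^{n,i}_s\big)\;=\;\frac1{(nk)^2}\sum_{x\in B_i}\big(G_s(\tfrac{x}{nk})-\bar G^i_s\big)^2\var_\nu(\eta(x))\;\le\;\frac{\|\partial_u G\|_\infty^2}{4\,n\,k^4}.
\]
Summing over the $k$ boxes and substituting $A=kn^{1+\theta}/B$, the displayed supremum is $O\big(B\,n^{-\theta}k^{-4}\big)$ uniformly in $s\in[0,T]$, hence $\bb E_{\mu^n}\big[\,\big|\int_0^t V^n_s\,ds\big|\,\big]\le C(G,T)\big(B^{-1}+B\,n^{-\theta}k^{-4}\big)$. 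Choosing $B=B_n:=n^{\theta/2}$ — which tends to infinity because $\theta>0$ and satisfies $B_n=o(n^\theta k^4)$ for every $k\ge 1$ — the right-hand side tends to $0$; the bound is insensitive to whether $k$ is fixed or $k=k(n)\uparrow\infty$, and $\beta$ plays no role here since the slow bonds are simply discarded.

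I expect the only genuinely delicate step to be the entropy/Feynman--Kac bookkeeping — choosing the scaling $Bnk$ so that the relative-entropy cost stays $O(1/B)$ while the Dirichlet form still controls the exponential moment — together with, inside the standard bound invoked above, the routine but slightly technical passage from $\mc D(\sqrt f)$ to the quantity $\langle\,\cdot\,,(-\mc L_{B_i})^{-1}\,\cdot\,\rangle_\nu$ (if one wants full rigour this needs truncating $f$ at some level $M$ and estimating the error), and quoting the segment spectral gap in its form uniform over the particle number in a box; none of these points is conceptually deep, and the argument is in essence a one-block estimate performed at the scale of a single box.
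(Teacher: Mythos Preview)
Your overall architecture --- entropy inequality, Feynman--Kac, then controlling the resulting variational problem box by box --- is exactly what the paper does. The divergence is at the step where you claim
\[
\sup_f\Big\{\int V^{n,i}_s\,f\,d\nu - A\,\mf D_{B_i}(\sqrt f)\Big\}\;\le\;\frac{C_0}{A}\,\big\langle V^{n,i}_s,\,(-\mc L_{B_i})^{-1}V^{n,i}_s\big\rangle_\nu
\]
with a universal $C_0$. This is \emph{not} a standard entropy-method bound, and it is \emph{false} for general conditionally mean-zero $V$: take $V$ to be the centred indicator of a single configuration; the left side is of order $\|V\|_\infty$ for moderate $A$, while the right side is exponentially small. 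The usual perturbative eigenvalue estimate only gives $\lambda\le\mathrm{Var}(V)/(A\gamma-\|V\|_\infty)$, which needs $A\gamma>\|V\|_\infty$; with $A=kn^{1+\theta}/B$, $\gamma\asymp n^{-2}$ and $\|V^{n,i}\|_\infty\asymp k^{-2}$, this reads $k^3 n^{\theta-1}\gg B$, incompatible with $B\to\infty$ when $k$ is fixed and $0<\theta\le 1$.

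What rescues your argument is that each $V^{n,i}_s$ is \emph{linear} in $\eta$. Writing $V^{n,i}_s=-\mc L_{B_i}u$ with $u=\sum_x a_x\eta(x)$ linear, one has $(u(\eta^{y,y+1})-u(\eta))^2\le(a_y-a_{y+1})^2$ pointwise, so the carr\'e du champ of $u$ is bounded in $L^\infty$ by twice its mean; then integration by parts plus Young gives your inequality with $C_0=1$. But this is exactly the paper's proof, stripped of the $H^{-1}$ language: the paper writes $V^{n,i}_s$ as a telescopic sum of nearest-neighbour gradients $\eta(y)-\eta(y+1)$ (an Abel summation), applies the change of variables $\eta\mapsto\eta^{y,y+1}$ and Young's inequality bond by bond, and matches the free constant to the Dirichlet form of the sped-up dynamics --- arriving at a bound $\frac{1}{A}+\frac{1}{kB}\mf D(\sqrt f)+Bn\|G\|_\infty^2-\frac{kn^{1+\theta}}{A}\mf D(\sqrt f)$ with $A=kn^{\theta/2}$, $B=k^{-1}n^{-(2+\theta)/2}$. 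So the spectral gap is a red herring here: once you justify your eigenvalue bound via the linear structure, you have reproduced the paper's direct argument, and the segment gap never enters.
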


The next two results are the laws of large numbers for the averaged empirical measure under the condition that $k$ is either constant or not too large.
\begin{theorem}\label{finitesmall}
	Fix $T>0$ and $k\in\bb N$. We speed up the process by $k^2n^{2+\theta}$ with $\theta<\beta-1$.  Then for every $t\in[0,T]$, the sequence of random measures $\pi^n_{t}$ converges in probability to the (time-independent) measure
	on the continuous torus $\bb T$
	\begin{equation*}
		\pi_t(du)\;=\;\sum_{i=0}^{k-1}\overline{\gamma}(i)\delta_{i/k}(du)
	\end{equation*}
	 where $\overline{\gamma}:\bb T_k\to[0,1]$ is the  function given by
	$\overline{\gamma}(i)=k\int_{i/k}^{(i+1)/k}\gamma(u)du.$
\end{theorem}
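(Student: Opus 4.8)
The plan is to exploit that, with $k$ fixed, the averaged empirical measure $\pi^n_t$ is supported on the fixed finite set $\{0,1/k,\dots,(k-1)/k\}\subset\bb T$ and is completely determined by the $k$ box-averages $m^n_i(\eta_t):=\frac1n\sum_{x=in}^{(i+1)n-1}\eta_t(x)$, $i=0,\dots,k-1$. Since $\mc M$ with the weak topology is a compact metric space and $\langle \pi^n_t,G\rangle=\frac1k\sum_{i=0}^{k-1}m^n_i(\eta_t)\,G(i/k)$ for every $G\in C(\bb T)$, it will be enough to show that, for each fixed $t\in[0,T]$ and each $i$, $m^n_i(\eta_t)\to\overline{\gamma}(i)$ in probability. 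The structural observation that makes this tractable is that the vector $(m^n_0,\dots,m^n_{k-1})$ is \emph{almost autonomous}: every interior exchange inside the $i$-th box preserves the number of particles there, so the only bonds that can change $m^n_i$ are the two slow bonds located at the endpoints of the $i$-th box.

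I would make this precise via Dynkin's formula. A direct computation from \eqref{ln} shows that the only nonzero contributions to $\mc L_n m^n_i$ come from the bonds $\{in-1,in\}$ and $\{(i+1)n-1,(i+1)n\}$, both of conductance $\alpha n^{-\beta}$, so that
\[
  k^2 n^{2+\theta}\,(\mc L_n m^n_i)(\eta)\;=\;\alpha k^2 n^{1+\theta-\beta}\,\big[\eta(in-1)-\eta(in)+\eta((i+1)n)-\eta((i+1)n-1)\big]\,.
\]
The bracket is bounded in absolute value by $4$, and $\theta<\beta-1$ forces $\alpha k^2 n^{1+\theta-\beta}\to 0$; hence $k^2n^{2+\theta}(\mc L_n m^n_i)\to 0$ uniformly in $\eta$. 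Consequently
\[
  M^n_i(t)\;:=\;m^n_i(\eta_t)-m^n_i(\eta_0)-\int_0^t k^2 n^{2+\theta}\,(\mc L_n m^n_i)(\eta_s)\,ds
\]
is a martingale whose drift term is $O(n^{1+\theta-\beta})$ and therefore vanishes as $n\to\infty$.

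Next I would control the martingale and the initial data. Only the two boundary slow bonds contribute to the predictable quadratic variation of $M^n_i$, each with a jump of size $O(1/n)$, so
\[
  \bb E_{\mu^n}\big[\langle M^n_i\rangle_t\big]\;\le\; k^2 n^{2+\theta}\cdot 2\alpha n^{-\beta}\cdot\tfrac1{n^2}\cdot t\;=\;2\alpha k^2 t\,n^{\theta-\beta}\,,
\]
which tends to $0$ since $\theta-\beta<-1$; thus $M^n_i(t)\to0$ in $L^2(\bb P_{\mu^n})$. For the initial data, under $\mu^n$ the variables $\eta_0(x)$ are independent with mean $\gamma(x/nk)$, so $\bb E_{\mu^n}[m^n_i(\eta_0)]=\frac1n\sum_{x=in}^{(i+1)n-1}\gamma(x/nk)$ is a Riemann sum that, by continuity of $\gamma$, converges to $k\int_{i/k}^{(i+1)/k}\gamma(u)\,du=\overline{\gamma}(i)$, while $\Var_{\mu^n}(m^n_i(\eta_0))\le\frac1{4n}\to0$. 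Hence $m^n_i(\eta_0)\to\overline{\gamma}(i)$ in probability, and combining this with the two limits above gives $m^n_i(\eta_t)\to\overline{\gamma}(i)$ in probability for every $i$ and every fixed $t$, which yields the claimed convergence $\pi^n_t\to\pi_t$.

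The argument is deliberately soft, because on the time scale $k^2 n^{2+\theta}$ with $\theta<\beta-1$ the slow bonds are still too slow to move any macroscopic mass between boxes; in particular no local-equilibrium or replacement input is needed and Proposition~\ref{mixing} is not invoked here, since $\pi^n$ only sees box-averages. I expect the only points requiring (minor) care to be: pinning down exactly which bonds feed into $\mc L_n m^n_i$ --- this is where the equally spaced placement of the slow bonds enters, and where the torus wrap-around for $i=0$ and $i=k-1$ must be handled --- and verifying the two exponent inequalities $1+\theta-\beta<0$ and $\theta-\beta<-1$, both of which hold precisely under $\theta<\beta-1$, with equality $\theta=\beta-1$ being exactly the threshold at which the drift prefactor $\alpha k^2 n^{1+\theta-\beta}$ stabilizes at $\alpha k^2$ and the discrete heat equation of the following theorem appears.
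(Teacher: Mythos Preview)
Your argument is correct and is genuinely simpler than the paper's. The paper follows Varadhan's entropy method throughout Section~\ref{s3}: it first establishes tightness of the laws $\bb Q_{\mu^n}$ on $D([0,T],\mc M)$, then characterizes limit points by showing $\langle\pi_t,G\rangle=\langle\pi_0,G\rangle$ for all $G$, and for the integral term \eqref{eq416} it invokes the replacement lemma (Corollary~\ref{corsmall}) to substitute the boundary occupations $\eta_s(in-1),\eta_s(in)$ by the box averages, after which a summation by parts produces the factor $n^{1+\theta-\beta}$. You bypass all of this by observing that the box averages $m^n_i$ are \emph{already} near-conserved quantities: the interior bonds do not touch them, so the raw drift $k^2n^{2+\theta}\mc L_n m^n_i$ is itself $O(n^{1+\theta-\beta})$ without any replacement, and the quadratic variation is $O(n^{\theta-\beta})$ for the same structural reason.

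What your approach buys is economy: no replacement lemma, no tightness, no Portmanteau argument---just Dynkin's formula and a law of large numbers at time zero. What the paper's approach buys is uniformity: its tightness is in the uniform topology, so it actually obtains convergence of $\pi^n_\cdot$ in $D([0,T],\mc M)$, which is slightly stronger than the fixed-$t$ statement you prove (and which is all the theorem asks for). More importantly, the paper's machinery---tightness and the replacement lemma---is set up once and reused verbatim for Theorem~\ref{finite}, where the drift does \emph{not} vanish and the limit is the discrete heat equation; there your shortcut no longer applies, because at $\theta=\beta-1$ the prefactor stabilizes at $\alpha k^2$ and one genuinely needs to replace the boundary sites by box averages to close the equation. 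So your route is optimal for this theorem in isolation, while the paper's route amortizes over both theorems.
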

\begin{theorem}\label{finite}
	Fix $T>0$ and speed up the process by $k^2n^{1+\beta}$.
	Assume that $k$ is fixed.  Then, for every $t\in[0,T]$, the sequence of random measures $\pi^n_{t}$ converges in probability to the measure
	\begin{equation}\label{defempd}
		\pi_t(du)\;=\;\sum_{i=0}^{k-1}\rho_t(i)\delta_{i/k}(du)
	\end{equation}
	on the continuous torus $\bb T$, where $\rho_t(\cdot)$ is the unique solution of the following discrete heat equation:
	\begin{equation}\label{dHDL}
		\begin{cases}
			\partial_t\rho_t(i)=\alpha \Delta_k\rho_t(i), \quad i\in \bb T_k, \vspace{4pt}\\
			\rho_0(i)=k\int_{i/k}^{(i+1)/k}\gamma(u)du,  \quad i\in \bb T_k
		\end{cases}
	\end{equation}
	where $\Delta_k$ is the discrete Laplacian, i.e., $\Delta_k\rho_t(i):= k^2\big[ \rho_t(i+1)+\rho_t(i-1)-2\rho_t(i)\big]$.
\end{theorem}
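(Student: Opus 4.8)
The plan is to run Varadhan's entropy method, which is unusually light here: since $k$ is fixed, the measure $\pi^n_t$ is a deterministic function of the $\bb R^k$-valued process $\big(\rho^n_t(0),\dots,\rho^n_t(k-1)\big)$, $\rho^n_t(i):=\frac1n\sum_{x=in}^{(i+1)n-1}\eta_t(x)$, so it suffices to identify the limit of this finite-dimensional process. First I would write the Dynkin decomposition $\rho^n_t(i)=\rho^n_0(i)+\int_0^t\big(k^2n^{1+\beta}\mc L_n G_i\big)(\eta_s)\,ds+M^{n,i}_t$, where $G_i(\eta):=\frac1n\sum_{x=in}^{(i+1)n-1}\eta(x)$. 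A short computation shows that only the two slow bonds bordering box $i$ contribute, since an exchange inside a box preserves its total mass, giving
\[
\big(k^2n^{1+\beta}\mc L_n G_i\big)(\eta)=\alpha k^2\big[\,\eta(in-1)-\eta(in)+\eta((i+1)n)-\eta((i+1)n-1)\,\big],
\]
a quantity bounded by $4\alpha k^2$, while $\langle M^{n,i}\rangle_t\le 2\alpha k^2 n^{-1}t$. Hence $M^{n,i}\to 0$ uniformly in $L^2$, the drift term is uniformly Lipschitz in $t$, so $(\rho^n_\cdot)_n$ is tight in $D([0,T],\bb R^k)$ with continuous limit points, and $\rho^n_0(i)\to\overline\gamma(i):=k\int_{i/k}^{(i+1)/k}\gamma$ in probability by the law of large numbers for the product measure $\mu^n$.

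The heart of the matter is a boundary Replacement Lemma: for every $j$,
\[
\limsup_{n\to\infty}\bb E_{\mu^n}\Big[\Big|\int_0^t\!\big(\eta_s(jn)-\rho^n_s(j)\big)ds\Big|\Big]=0,\qquad
\limsup_{n\to\infty}\bb E_{\mu^n}\Big[\Big|\int_0^t\!\big(\eta_s((j{+}1)n{-}1)-\rho^n_s(j)\big)ds\Big|\Big]=0,
\]
i.e.\ an occupation variable at the edge of a box may be replaced, in the time-integrated $L^1$-sense, by that box's average occupation. I would prove this by the entropy inequality combined with the Feynman--Kac / Rayleigh--Ritz bound: with $\nu_\rho$ the product Bernoulli measure of parameter $\rho\in(0,1)$, which is reversible for $\mc L_n$, the displayed expectation is dominated by $\frac{H(\mu^n\mid\nu_\rho)}{a}+\frac{T}{a}\sup_f\big\{a\langle V,f\rangle_{\nu_\rho}-k^2n^{1+\beta}\langle\sqrt f,(-\mc L_n)\sqrt f\rangle_{\nu_\rho}\big\}$ for $V(\eta)=\eta(jn)-\frac1n\sum_{x=jn}^{(j+1)n-1}\eta(x)$ and small $a>0$. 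Since $V$ is supported in box $j$ and has zero conditional mean given the number of particles in that box, the supremum is estimated using the spectral gap of the exclusion process on a segment of $n$ sites, which is of order $n^{-2}$; as we are in the super-diffusive regime the time scale $k^2n^{1+\beta}$ dominates this in-box relaxation scale $n^2$ --- this is exactly where the standing hypothesis $\beta>1$ is used --- and, after optimising over $a$ and choosing $\rho$ so that $H(\mu^n\mid\nu_\rho)=O(n)$, the bound vanishes. An alternative route, closer to \cite{fgn1} and making the role of Proposition~\ref{mixing} explicit, is to first replace $\eta(jn)$ by the empirical density over the $\epsilon n$ sites of box $j$ adjacent to the slow bond --- using only those unit-rate bonds and the same spectral-gap input --- and then invoke Proposition~\ref{mixing} to pass from that local average to the full box average. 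I expect this step to be the main obstacle: everything else is soft, whereas here one must quantify the instantaneous in-box equilibration at the scale $k^2n^{1+\beta}$.

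Granting the Replacement Lemma, the drift from the first step equals, with $o(1)$ error in $L^1(\bb P_{\mu^n})$, $\alpha k^2\big[\rho^n_s(i-1)+\rho^n_s(i+1)-2\rho^n_s(i)\big]=\alpha\Delta_k\rho^n_s(i)$, so every limit point of $(\pi^n_\cdot)$ is supported on trajectories $\pi_t=\sum_i r_t(i)\delta_{i/k}$ whose coordinates solve $r_t(i)=r_0(i)+\int_0^t\alpha\Delta_k r_s(i)\,ds$ with $r_0=\overline\gamma$. Since this is a linear system of $k$ ordinary differential equations it has a unique solution, necessarily $r\equiv\rho$; the limit is therefore deterministic, whence $\pi^n_t\to\pi_t$ in probability for every $t\in[0,T]$, which is the assertion.
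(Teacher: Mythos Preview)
Your proposal is correct and follows essentially the paper's route: Dynkin decomposition, vanishing martingale, a boundary Replacement Lemma proved via the entropy inequality and Feynman--Kac, and uniqueness of the limiting linear ODE. The only differences are cosmetic---the paper tests $\pi^n$ against smooth $G$ rather than working directly in $\bb R^k$ coordinates (equivalent since $k$ is fixed), and proves the Replacement Lemma (Lemma~\ref{repinf}) by writing $\eta(in)-\rho^n(i)$ as a telescoping sum of nearest-neighbour gradients and applying Young's inequality to each term against the Dirichlet form, rather than invoking the $O(n^{-2})$ spectral gap of SEP on a segment; note also that your parameter $a$ must be taken \emph{large} (of order $n^{(\beta-1)/2}$ in the paper's optimization), not small, for the term $H(\mu^n\mid\nu_\rho)/a$ to vanish.
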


\begin{figure}[H]
	\centering
	\begin{tikzpicture}[scale=1,smooth];
	\draw[->] (0,-0.5)--(0,4) node[anchor=east]{$\gamma$};
	\draw[->] (-0.5,0)--(13,0) node[anchor=north]{$u$};

	\draw[black = solid,  very thick ] plot   [domain=0:12](\x,{1+10*(\x/12)*(1-\x/12)});

	\foreach \i in {1,...,24} {
		\draw (\i/2, 0.1) -- (\i/2, -0.1) ;
	}

	\draw (0.5,-0.1) node[below]{\small $\frac{1}{kn}$};
	\draw[densely dashed] (2,2.8) -- (2,-0.1) node[below]{\small $\frac{n}{kn}$};
	\draw[very thick] (2,1.72) -- (0,1.72) node[left]{\tiny $\displaystyle k\int_0^{\frac{1}{k}}\!\!\!\gamma(u)du$};

	\draw[dashed] (2,2.8) -- (0,2.8) node[left]{\tiny $\displaystyle k\int_{\frac{1}{k}}^{\frac{2}{k}}\!\!\!\gamma(u)du$};

	\draw[densely dashed] (4,3.4) -- (4,-0.1) node[below]{\small $\frac{2n}{kn}$};
	\draw[very thick] (2,2.8) -- (4,2.8);

	\draw[densely dashed] (6,3.4) -- (6,-0.1);
	\draw[very thick] (4,3.4) -- (6,3.4);

	\draw[very thick] (6,3.4) -- (8,3.4);
	\draw[very thick] (8,2.8) -- (10,2.8);

	\draw[very thick] (10,1.72) -- (12,1.72);

	\draw[densely dashed] (8,3.4) -- (8,-0.1);
	\draw[densely dashed] (10,2.8) -- (10,-0.1);
	\draw[densely dashed] (12,1.72) -- (12,-0.1)node[below]{\small $\frac{kn}{kn}$};

	\end{tikzpicture}
	\bigskip

	\caption{The graph of the macroscopic profile $\gamma:\bb T \to [0,1]$ and its averages in each of the $k$ boxes of size $n$. Note the embedding of the discrete torus $\bb T_{kn}$ into the continuous torus $\bb T=[0,1)$.}\label{fig3}
\end{figure}
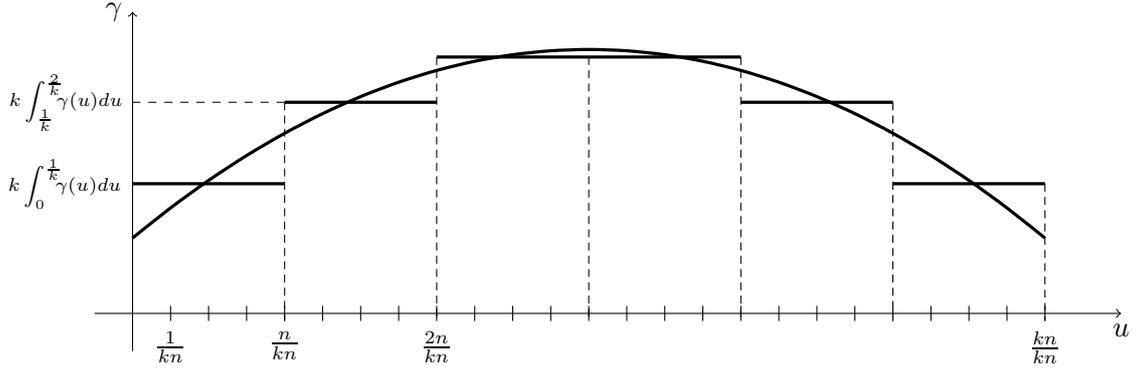


Assume that $\gamma$ is smooth and additionally that there exists  $\varepsilon_0\in(0,1/2)$ such that $\varepsilon_0< \gamma(u)<1-\varepsilon_0$ for every $u\in\bb T$. By the maximum principle, if $\rho_t$ is the solution of continuous heat equation with initial condition $\gamma$, then $\varepsilon_0< \rho_t(u)<1-\varepsilon_0$ for every $u\in\bb T$.

\begin{theorem}\label{infinite}
	Fix $T>0$ and  speed up the process by $k^2n^{1+\beta}$. Assume that $k=k(n)\uparrow\infty$ as $n\uparrow\infty$.  Then, for every $t\in[0,T]$ and every $G\in C^2(\bb T)$,
	$$\lim_{n\to\infty}\bb E_{\mu^n}\Big[\Big| \langle \pi_{t}^n,G \rangle\,-\,\int_{\bb T}G(u)\rho_t(u)du\Big|\Big]\;=\;0\,,$$
	where $\rho_t(\cdot)$ is the strong solution of the heat equation:
	\begin{equation}\label{cHDL}
		\begin{cases}
			\partial_t\rho_t(u)=\alpha\Delta \rho_t(u), \quad u\in \bb T \\
			\rho_0(u)=\gamma(u),  \quad u\in \bb T.
		\end{cases}
	\end{equation}
\end{theorem}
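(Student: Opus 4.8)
\emph{Proof strategy.}\quad The plan is to run Yau's relative entropy method in the refined form of Jara and Menezes \cite{jm}, with a reference measure given by a product of Bernoulli laws that is \emph{constant on each box}. Write $N_n:=k^2n^{1+\beta}$ for the time scale and let $\rho_t$ be the classical solution of \eqref{cHDL}; by the maximum principle $\varepsilon_0<\rho_t<1-\varepsilon_0$, and, $\gamma$ being smooth, $\rho$ is smooth on $[0,T]\times\bb T$ with bounded derivatives. For $i\in\bb T_k$ put $\bar\eta_i:=\frac1n\sum_{x=in}^{(i+1)n-1}\eta(x)$, so that $\langle\pi_t^n,G\rangle$ is $\frac1k\sum_{i=0}^{k-1}G(i/k)\,\bar\eta_i$ evaluated at $\eta_t$. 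Define
\begin{equation*}
	\nu_t^n(\eta)\;:=\;\prod_{i=0}^{k-1}\ \prod_{x=in}^{(i+1)n-1}\rho_t(i/k)^{\eta(x)}\,\big(1-\rho_t(i/k)\big)^{1-\eta(x)}\,,
\end{equation*}
let $\mu_t^n$ be the law at time $t$ of the process speeded up by $N_n$ and started from $\mu^n$, put $f_t^n:=d\mu_t^n/d\nu_t^n$ and $H_n(t):=\int f_t^n\log f_t^n\,d\nu_t^n$. Since $\mu^n$ and $\nu_0^n$ are Bernoulli products whose parameters differ by $O(1/k)$ at each site (here $\gamma$ smooth and $\varepsilon_0<\gamma<1-\varepsilon_0$ enter), one has $H_n(0)=O(n/k)=o(nk)$. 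The box-constant choice is essentially forced: since $\beta>1$ the time scale $N_n$ dominates the diffusive scale $(nk)^2$ of the bulk, and a genuinely slowly varying profile $\rho_t(x/nk)$ would produce a divergent term of order $N_n(nk)^{-2}=n^{\beta-1}$ in the entropy production estimate below, whereas for a box-constant profile the bulk part of the generator annihilates $\nu_t^n$.

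Next comes Yau's inequality: with $\mc L_n$ regarded as acting on the function $\eta\mapsto\nu_t^n(\eta)$ (recall $\mc L_n$ is self-adjoint for the counting measure), for a.e.\ $t\in[0,T]$
\begin{equation*}
	\frac{d}{dt}H_n(t)\;\le\;-\,c_0\,N_n\,\fD_n\big(\sqrt{f_t^n}\big)\;+\;\int_{\Omega_{nk}}V_t^n\,f_t^n\,d\nu_t^n\,,\qquad V_t^n\;:=\;N_n\,\frac{\mc L_n\nu_t^n}{\nu_t^n}\,-\,\partial_t\log\nu_t^n\,,
\end{equation*}
where $c_0>0$ and $\fD_n(\sqrt f):=\frac14\sum_{x}\int\big(\sqrt{f(\eta^{x,x+1})}-\sqrt{f(\eta)}\big)^2\,\nu_t^n(d\eta)$, the sum running over the bulk edges (those with $x,x+1$ in the same box). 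Because $\nu_t^n$ is invariant under $\eta\mapsto\eta^{x,x+1}$ whenever $x,x+1$ lie in the same box, the bulk edges contribute nothing to $\mc L_n\nu_t^n/\nu_t^n$, which therefore reduces to a sum over the $k$ slow bonds of local functions of the two occupation variables straddling each slow bond; moreover $\int V_t^n\,d\nu_t^n=0$.

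The heart of the matter is the estimate
\begin{equation*}
	\int_{\Omega_{nk}}V_t^n\,f_t^n\,d\nu_t^n\;\le\;C\,H_n(t)\;+\;c_0\,N_n\,\fD_n\big(\sqrt{f_t^n}\big)\;+\;R_n(t)\,,\qquad\int_0^TR_n(t)\,dt=o(nk)\,.
\end{equation*}
Taylor-expanding each slow-bond ratio $\nu_t^n(\eta^{x,x+1})/\nu_t^n(\eta)-1$ to second order in the jump $\rho_t((i+1)/k)-\rho_t(i/k)=O(1/k)$, multiplying by the effective slow-bond rate $N_n\alpha n^{-\beta}=\alpha k^2 n$, and summing over the $k$ slow bonds produces: (a) a leading piece $\sum_i d_i\,\big(\eta(x_i)-\eta(x_i+1)\big)$ with $|d_i|=O(kn)$, where $x_i:=(i+1)n-1$ is the last site of box $i$; (b) further pieces with coefficients of order $n$ which are quadratic in the occupations straddling the slow bonds; and (c) a remainder bounded by $O(n)$ in supremum norm. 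In (a) and (b) each boundary occupation must be replaced by the corresponding box average $\bar\eta_i$; this is done \emph{not} through the entropy-method replacement lemma (which, as recalled in the introduction, fails once $k$ grows beyond $o(n^{(\beta-1)/2})$) but directly via the Dirichlet form: using that $\nu_t^n$ is reversible under every bulk exchange, a telescoping identity along a path inside box $i$ together with Cauchy--Schwarz gives $\big|\int(\eta(x_i)-\bar\eta_i)\,f_t^n\,d\nu_t^n\big|\le\big(n\,\fD_n^{(i)}(\sqrt{f_t^n})\big)^{1/2}$, where $\fD_n^{(i)}$ collects the bulk edges inside box $i$. Summing over $i$ with Cauchy--Schwarz, the total of these differences is at most $C\,(k^3n^3)^{1/2}\,\fD_n(\sqrt{f_t^n})^{1/2}$, which by Young's inequality is absorbed into $c_0N_n\fD_n(\sqrt{f_t^n})$ up to an error of order $k^3n^3/N_n=kn^{\,2-\beta}=o(nk)$ — and it is exactly here that $\beta>1$ is used. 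After this replacement, a discrete summation by parts on $\bb T_k$ turns (a) and (b) into a sum over the $k$ boxes of local functions of the $\bar\eta$'s with coefficients of order $n$; combining these with $\partial_t\log\nu_t^n=\sum_i\frac{n\,\partial_t\rho_t(i/k)}{\rho_t(i/k)(1-\rho_t(i/k))}\big(\bar\eta_i-\rho_t(i/k)\big)$, using the equation $\partial_t\rho_t=\alpha\Delta\rho_t$ and the gradient structure of the exclusion process — equivalence of ensembles to reduce products of boundary variables to functions of box averages, and Taylor expansion to replace $\Delta$ by the discrete Laplacian on $\bb T_k$ — makes all order-$n$ terms cancel, leaving a sum over boxes whose coefficients are $o(n)$ together with the centered quadratic residuals of the $\bar\eta$'s; both have variance $o(nk)$, resp.\ $O(k)$, under $\nu_t^n$, so the entropy inequality and an elementary exponential (Hoeffding-type) estimate bound their integral against $f_t^n$ by $CH_n(t)+o(nk)$. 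The remainder (c) and $H_n(0)$ are $o(nk)$ as well, which gives the displayed bound.

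Plugging this back into Yau's inequality yields $\frac{d}{dt}H_n(t)\le CH_n(t)+R_n(t)$, whence Gronwall gives $H_n(t)\le e^{Ct}\big(H_n(0)+\int_0^TR_n\big)=o(nk)$, uniformly on $[0,T]$. To conclude, fix $G\in C^2(\bb T)$ and $\delta>0$. Under the product measure $\nu_t^n$, the variable $\langle\pi_t^n,G\rangle=\frac1{nk}\sum_xG(\lfloor x/n\rfloor/k)\,\eta(x)$ has mean $\frac1k\sum_iG(i/k)\rho_t(i/k)$, which converges to $\int_{\bb T}G\rho_t$ (a Riemann sum, here $k\uparrow\infty$ is used), and is sub-Gaussian with variance $O(1/(nk))$; hence for $n$ large $\nu_t^n\big(|\langle\pi_t^n,G\rangle-\int_{\bb T}G\rho_t|>\delta\big)\le e^{-c\delta^2nk}$. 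By the entropy inequality, $\mu_t^n\big(|\langle\pi_t^n,G\rangle-\int_{\bb T}G\rho_t|>\delta\big)\le(\log2+H_n(t))/(c\delta^2nk)\to0$, so $\limsup_n\bb E_{\mu^n}\big[|\langle\pi_t^n,G\rangle-\int_{\bb T}G\rho_t|\big]\le\delta$, and letting $\delta\downarrow0$ proves the theorem. I expect the main obstacle to be the entropy production estimate above: eliminating the otherwise-divergent bulk term via the box-constant reference, carrying out the PDE-driven cancellation of the order-$nk$ and order-$n$ parts of the slow-bond drift against $\partial_t\log\nu_t^n$, and absorbing the within-box boundary fluctuations into the large Dirichlet form $-c_0N_n\fD_n$ — the last step, and the requirement that every error remain $o(nk)$ as $k=k(n)\uparrow\infty$, being precisely where the standing hypothesis $\beta>1$ is essential.
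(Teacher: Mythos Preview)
Your proposal is correct and follows essentially the same strategy as the paper: the box-constant reference measure $\nu_t^n$, the refined Yau inequality of Jara--Menezes, the within-box replacement of boundary variables via the Dirichlet form (yielding the $kn^{2-\beta}=o(kn)$ error that pins down the role of $\beta>1$), the PDE-driven cancellation of the linear order-$n$ drift against $\partial_t\log\nu_t^n$, Gronwall, and the final entropy/subgaussian step are all exactly what the paper does.

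The one genuine difference is your treatment of the quadratic term $-\alpha n\sum_i k^2[\rho_t(\tfrac{i-1}{k})-\rho_t(\tfrac{i}{k})]^2 w_t(ni-1)w_t(ni)$. The paper (Lemma~4.6 and Lemmas~4.7--4.10) invokes the Jara--Menezes flow lemma with a mesoscopic scale $\ell=n\sqrt{k}$ that spans many boxes; the telescoping then crosses slow bonds, producing the auxiliary quantities $W^\ell(G)$ and $Z^\ell(G)$ that must be estimated separately. You instead replace each factor $w_t(ni-1)$, $w_t(ni)$ by its own single-box average $\bar w_{i-1}$, $\bar w_i$, never crossing a slow bond; the replacement cost is again $O(kn^{2-\beta})$ absorbed into the Dirichlet form, and the resulting sum $\sum_i O(n)\,\bar w_{i-1}\bar w_i$ is $2$-dependent with each factor subgaussian of order $O(1/n)$, so Lemmas~5.3--5.4 (in your language, Hoeffding plus the $\ell$-dependent entropy inequality) give the bound $CH_n(t)+O(k)$ directly. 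Your route is shorter and exploits the box structure more fully; the paper's route is closer to the general Jara--Menezes machinery and would transfer more readily to less structured slow-bond configurations.
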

The proofs of Theorem~\ref{finite} and Theorem~\ref{finitesmall} are based on Varadhan's entropy method, see~\cite{kl} for details on the subject. The same method could be used to achieve Theorem~\ref{infinite}, but only if $k=o(n^{\frac{\beta-1}{2}})$. This restriction arises from the proof of the replacement lemma (see Lemma~\ref{repinf}), which can only be established under the assumption $k=o(n^{\frac{\beta-1}{2}})$.

To drop this restriction on the growth of $k$, we employ the refined Yau's entropy method introduced by Jara and Menezes in \cite{jm}, which requires some regularity on the initial profile but, as we shall see, does not impose any bound on how fast
$k$ can grow. However, this method does not cover the case where $k$ is fixed. This explains why two different approaches are needed: one for $k$ fixed and one for $k\to\infty$.

\section{Fixed number of boxes}\label{s3}
In this section we prove the following results concerning $k$ fixed:  Proposition~\ref{mixing}, which says that the systems immediately equilibrates inside each box,  Theorem~\ref{finitesmall}, which says that there is no time evolution if the speed is strictly smaller than $k^2n^{1+\beta}$, and Theorem~\ref{finite}, which describes the limit of the system as being governed by the discrete heat equation when the speed is  $k^2n^{1+\beta}$.
\subsection{The replacement lemma}
It is straightforward to check that the process has a family of reversible invariant measures given by the Bernoulli product measures of constant parameter.  Let us denote by $\nu^n$ the Bernoulli product measure of constant parameter equal to $1/2$ on $\Omega_{nk}$. Given a function $f:\Omega_{nk}\to\bb R$, define the associated Dirichlet form with respect to a measure $\nu$ by
\begin{equation*}
\mf D(f;\nu)\,:=\, \int \sum_{x\in \bb T_{nk}}\,\xi^{nk}_{x,x+1}\,[f(\eta^{x,x+1})-f(\eta)]^2 d\nu(\eta)\,.
\end{equation*}
If $\nu$ is the invariant measure $\nu^{n}$, we write $\mf D(f; \nu^{n})$ simply  as $\mf D(f)$. We also define the Dirichlet form corresponding to the jump over a bond $(x,x+1)$ by
\begin{equation*}
\mf D_{x,x+1}(f;\nu)\,:=\, \int \,[f(\eta^{x,x+1})-f(\eta)]^2 d\nu(\eta)\,.
\end{equation*}
Again, if $\nu$ is the invariant measure $\nu^{n}$, we omit it in the notation.
With these notations, we have
\begin{equation*}
\mf D(f;\nu)\;=\; \sum_{x\in\bb T_{nk}} \xi^{nk}_{x,x+1}\mf D_{x,x+1}(f;\nu)\,.
\end{equation*}
Note that the definition above of Dirichlet forms \textbf{does not include} the acceleration of the process.
The next lemma tells us that for the speeded up process, $k$ times the total cost of replacing the end points of every box by the average occupation  is negligible. Of course, $k$ cannot be extremely large: the correct assumption is to assume
that $k$ is either  fixed or at most  a certain power of $n$, as we can see below. We note that only the case where $k$ is fixed is necessary in this section; however, we include $k=o(n^{\frac{\beta-1}{2}})$ in the statement below since it will come in handy when we study the large deviations of the problem in the future.
\begin{lemma}\label{repinf}
	Assume that $k$ is fixed or $k=k(n)=o(n^{\frac{\beta-1}{2}})$ and  the process is speeded up by $k^2n^{1+\beta}$. Then, for any $t\in[0,T]$ and any function $G\in C^{1,2}([0,T]\times\bb T)$,
	$$\limsup_{n\to\infty}\bb E_{\mu^n}\Big[\Big|\int_0^t k\sum_{i=0}^{k-1}G_s\big(\pfrac{i}{k}\big)\Big( \eta_s\big((i-1)n\big)\,+\,\eta_s(in-1)\,-\,\frac{2}{n}\sum_{x=(i-1)n}^{in-1}\eta_s(x) \Big)ds\Big|\Big]\;=\;0\,.$$
\end{lemma}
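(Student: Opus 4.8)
The plan is to bound the time-integral by an entropy/Dirichlet-form estimate in the usual way of Varadhan's method, but with a twist: the "slow bond" is exactly what separates the box $\{(i-1)n,\dots,in-1\}$ from the rest, so inside a single box the process is just an accelerated homogeneous symmetric exclusion process with all conductances equal to one. First I would recall the standard energy inequality: by the entropy inequality and Feynman--Kac, for any density $f$ with respect to $\nu^n$ and any $A>0$,
\begin{equation*}
	\bb E_{\mu^n}\Big[\Big|\int_0^t V_s(\eta_s)\,ds\Big|\Big]\;\le\;\frac{H(\mu^n\,|\,\nu^n)}{A\,k^2 n^{1+\beta}}\;+\;\frac{1}{A}\int_0^t\sup_f\Big\{\int V_s\,f\,d\nu^n\;-\;k^2 n^{1+\beta}\,\mf D(f)\Big\}\,ds\,,
\end{equation*}
where $V_s(\eta)=k\sum_{i}G_s(i/k)\big(\eta((i-1)n)+\eta(in-1)-\tfrac2n\sum_{x=(i-1)n}^{in-1}\eta(x)\big)$ and $H(\mu^n\,|\,\nu^n)=O(nk)$ since $\mu^n$ is a product measure. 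So the whole point is to show that the variational expression is $o(k^2 n^{1+\beta}\cdot n k^{-1})$, i.e.\ $o(n^{2+\beta})$, after which sending first $n\to\infty$ and then $A\to\infty$ kills both terms.

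The second step is the core: estimate, for fixed $s$, $\int V_s f\,d\nu^n - k^2 n^{1+\beta}\mf D(f)$. Since $V_s$ is a sum over boxes $i$ of a local function supported in box $i$, and $\mf D(f)\ge \sum_{i}\sum_{x\in \text{box }i}\mf D_{x,x+1}(f)$ (dropping the slow-bond terms, which only helps, and using that the interior conductances are $1$), the problem decouples box by box. For a single box I would use the one-block-type estimate: replacing each endpoint $\eta((i-1)n)$ or $\eta(in-1)$ by the empirical average $\tfrac1n\sum_{x}\eta(x)$ over the box costs, in the Dirichlet form, a telescoping chain of nearest-neighbour exchanges of length $O(n)$, so a standard computation (moving a particle from a site to the average by $O(n)$ swaps, each controlled by $2ab\le \epsilon a^2+\epsilon^{-1}b^2$) gives
\begin{equation*}
	\int V_s f\,d\nu^n\;\le\;\frac{C\,k}{\epsilon}\sum_{i=0}^{k-1}\|G_s\|_\infty^2\cdot n\;+\;\epsilon\,k\sum_{i=0}^{k-1}\sum_{x\in\text{box }i}\mf D_{x,x+1}(f)
\end{equation*}
(the factor $n$ from the length of the path, the factor $k$ from the prefactor in $V_s$). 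Choosing $\epsilon = k / (k^2 n^{1+\beta})$ so that the second term is absorbed by $k^2 n^{1+\beta}\mf D(f)$, the first term becomes $C k^2 n^{1+\beta}\cdot k n \|G\|_\infty^2 = O(k^3 n^{2+\beta})$. Dividing by the target $n^{2+\beta}$ leaves $O(k^3)$, which vanishes after dividing by $A$ and sending $A\to\infty$ when $k$ is fixed; and when $k=k(n)\to\infty$ one instead tracks the dependence more carefully — the bound that actually comes out is $O(k^2/n^{\beta-1})$ times a constant, which is $o(1)$ precisely when $k=o(n^{(\beta-1)/2})$. This is the origin of the stated growth restriction.

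The main obstacle is getting the power of $n$ in the path-length estimate sharp enough: a naive "move the particle across the whole box" argument costs $n$ in the Dirichlet form, and one must check that this is not wasteful — in fact it is essentially optimal because the box is a segment of length $n$ with unit conductances, whose spectral gap scales like $n^{-2}$, so equilibration inside a box requires time of order $n^2$ and the replacement cost is genuinely $\Theta(n)$ per box per unit Dirichlet weight. Tracking constants through this while keeping the $k$-dependence explicit is what forces $k=o(n^{(\beta-1)/2})$, and is the delicate part of the argument; everything else (the entropy inequality, $H(\mu^n|\nu^n)=O(nk)$, discarding slow-bond Dirichlet terms, the final $n\to\infty$ then $A\to\infty$ limits) is routine. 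For the statement as used in this section, where $k$ is fixed, all these $k$-powers are harmless constants and the proof is straightforward once the one-box estimate is in place.
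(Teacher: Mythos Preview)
Your plan follows the same route as the paper: entropy inequality plus Feynman--Kac to reduce to a variational problem, then inside each box write the endpoint-minus-average as a telescoping sum of nearest-neighbour gradients and control each gradient by Young's inequality against $\mf D_{x,x+1}(\sqrt{f})$. The paper's key identity is
\[
\eta(0)+\eta(n-1)-\frac{2}{n}\sum_{x=0}^{n-1}\eta(x)=\sum_{x=1}^{n-1}\frac{n-2x}{n}\big(\eta(x-1)-\eta(x)\big),
\]
followed by a change of variables $\eta\mapsto\eta^{x-1,x}$ and $ab\le a^2/B+Bb^2$, yielding exactly your bound $\int V_s f\,d\nu^n\le (k/B)\,\mf D(\sqrt{f})+Bk^2n\|G\|_\infty^2$.

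That said, your bookkeeping in the reduction step is off and, if executed as written, would not close. First, the Dirichlet form must be of $\sqrt{f}$, not $f$; the change of variables plus Young step produces $\mf D_{x,x+1}(\sqrt{f})$. Second, your displayed entropy/Feynman--Kac bound is mis-normalized: with parameter $\gamma$ the correct inequality is
\[
\bb E_{\mu^n}\Big[\Big|\int_0^t V_s\,ds\Big|\Big]\le \frac{H(\mu^n|\nu^n)}{\gamma}+\int_0^t\sup_f\Big\{\int V_s f\,d\nu^n-\frac{k^2 n^{1+\beta}}{\gamma}\,\mf D(\sqrt{f})\Big\}ds,
\]
so the Dirichlet coefficient carries a $1/\gamma$ that you have dropped. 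As a consequence your choice $\epsilon=k/(k^2n^{1+\beta})$ produces a residual $O(k^3 n^{2+\beta})$ that does \emph{not} vanish after multiplying by any fixed $1/A$; your ``dividing by the target $n^{2+\beta}$'' step has no justification in the formula you wrote. The paper instead takes $\gamma=Akn$ (so that $H/\gamma=O(1/A)$) and then matches $\epsilon k$ to the Dirichlet coefficient $kn^\beta/A$, giving residual $Ak^2 n^{1-\beta}$; optimizing over $A$ is what makes the product $k^2/n^{\beta-1}$ appear and yields the restriction $k=o(n^{(\beta-1)/2})$. Your final sentence identifies this quantity correctly, so the conceptual picture is right, but the intermediate arithmetic needs to be redone with the correct normalization.
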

\begin{proof}
	Denote the term inside the time integral by $V_s$. By the entropy inequality, the expectation in the lemma is less than or equal to
	\begin{equation}\label{repa1}
		\frac{H(\mu^n|\nu^{n})}{Akn}\,+\, \frac{1}{Akn}\log \bb E_{\nu^{n}}\Big[\exp\Big\{Akn\Big|\int_0^t V_s\, ds\Big|\Big\}\Big]
	\end{equation}
	for any $A>0$.
	A simple computation shows that $H(\mu^n|\nu^{n})=O(nk)$.  Therefore the first term in  \eqref{repa1} vanishes as $n\to\infty$ if we let $A\to\infty$, which will be made \textit{a posteriori}. Using the inequality $e^{|x|}\leq e^x+e^{-x}$ and the fact that
	\begin{equation*}
		\limsup_n\frac{1}{n}\log(a_n+b_n)\;=\;\max\Big\{\limsup_n\frac{1}{n}\log a_n,\,\limsup_n\frac{1}{n}\log b_n \Big\}\,,
	\end{equation*}
	for all sequences $\{a_n\}$ and $\{b_n\}$ of positive numbers, we see that we just need to estimate the second term in \eqref{repa1} without the absolute value inside the exponential. Using the Feymann-Kac formula (see \cite[Lemma 7.2. p.336]{kl}), we reduce the problem to showing that
	\begin{equation}\label{repa2}
		\limsup_{n\to\infty}\; t\cdot \sup_{0\leq s\leq t}\sup_{f \,\text{density}}\Big\{ \int V_s f d\nu^{n}\,-\, \frac{kn^{\beta}}{A} \big\<\sqrt{f},(-\mc L_n) \sqrt{f}\big\>_{\nu^{n}}\Big\}\;=\;0\,.
	\end{equation}
	 To obtain the above expression we made use of the fact that the process is speeded up by $k^2n^{1+\beta}$, so that the generator of the process is $k^2n^{1+\beta}\mc L_n$. We moreover note that the supremum in time above, appears as a consequence of the fact that the test function $G_s$ depends on time.

	Let us first deal with the replacement on the box $\{0,1,\ldots,n-1\}$. Note that
	$$\eta(0)\,-\,\frac{1}{n}\sum_{x=0}^{n-1}\eta(x)\;=\;\sum_{x=1}^{n-1}\frac{n-x}{n}\big(\eta(x-1)-\eta(x)\big),$$
	and
	$$\eta(n-1)\,-\,\frac{1}{n}\sum_{x=0}^{n-1}\eta(x)\;=\;\sum_{x=1}^{n-1}\frac{x}{n}\big(\eta(x)-\eta(x-1)\big).$$
We thus get
	$$\eta(0)\,+\,\eta(n-1)\,-\,\frac{2}{n}\sum_{x=0}^{n-1}\eta(x)\;=\; \sum_{x=1}^{n-1}\frac{n-2x}{n}\big(\eta(x-1)-\eta(x)\big)\,.$$
	For each $1\leq x\leq n-1$, performing the change of variables $\eta\mapsto\eta^{x,x-1}$, for which $\nu^n$ is invariant, we have that
	$$\int \big(\eta(x-1)\,-\,\eta(x)\big)f d\nu^{n}\;=\;\int \eta(x)\big(f(\eta^{x-1,x})-f(\eta)\big)d\nu^{n}.$$
	Writing
	$f(\eta^{x-1,x})-f(\eta)=\big\{\sqrt{f(\eta^{x-1,x})}\,-\,\sqrt{f(\eta)}\big\}\big\{\sqrt{f(\eta^{x-1,x})}\,+\,\sqrt{f(\eta)}\big\}$ and using Young's inequality, we obtain that for any $B>0$,
	$$\int G_s(0) \frac{n-2x}{n}\big(\eta(x-1)-\eta(x)\big)fd\nu^{n}\,\leq \, \frac{1}{B} \mf D_{x-1,x}(\sqrt{f})\,+\,B \|G\|_\infty^2\,.$$
	Summing $x$ from $1$ to $n-1$, we have that
	\begin{equation}\label{sumbound}
		\int G_s(0)\Big\{\eta(0)\,+\,\eta(n-1)\,-\,\frac{2}{n}\sum_{x=0}^{n-1}\eta(x)\Big\}f d\nu^{n}\;\leq\; \sum_{x=1}^{n-1}\frac{1}{B}\mf    D_{x-1,x}(\sqrt{f})\, +\, Bn\|G\|_\infty^2.
	\end{equation}
	We can do the same estimate for the replacement on the box $\{in,\ldots, (i+1)n-1\}$ for every $1\leq i\leq k-1$. Summing $i$ from $0$ to $k-1$, we obtain that for any $B>0$,
	$$\int V_s fd\nu^{n}\;\leq\; \frac{k}{B}\mf D(\sqrt{f})\,+\, Bk^2n\|G\|_\infty^2.$$
	To finish the proof, it remains to choose $A=\frac{\sqrt{n^{\beta-1}}}{k}$ and $B=(k^2n^{\beta+1})^{-1/2}$.

\end{proof}
Following almost the same argument, one can easily obtain the next result.
\begin{corollary}\label{corsmall}
	Assume that $k$ is fixed and fix $\theta >0$. We speed up the process by $k^2n^{2+\theta}$. Then for any $t\in[0,T]$, any function $G\in C^{1,2}([0,T]\times\bb T)$,
	$$\limsup_{n\to\infty}\bb E_{\mu^n}\Big[\Big|\int_0^t k\sum_{i=0}^{k-1}G_s\big(\pfrac{i}{k}\big)\Big( \eta_s\big((i-1)n\big)\,+\,\eta_s(in-1)\,-\,\frac{2}{n}\sum_{x=n(i-1)}^{in-1}\eta_s(x) \Big)ds\Big|\Big]\;=\;0\,.$$
\end{corollary}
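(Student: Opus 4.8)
The plan is to mimic the proof of Lemma~\ref{repinf} almost verbatim, tracking only the effect of the different time scale. Write $V_s$ for the expression inside the time integral in the statement. I would first apply the entropy inequality with parameter $Akn$, using $H(\mu^n\,|\,\nu^n)=O(nk)$ so that the resulting entropy term is $O(1/A)$, and then the Feynman--Kac formula (\cite[Lemma 7.2, p.~336]{kl}); since the process is now speeded up by $k^2n^{2+\theta}$, so that $k^2n^{2+\theta}/(Akn)=kn^{1+\theta}/A$, this reduces matters to showing that for every $A>0$
\begin{equation*}
\limsup_{n\to\infty}\; t\cdot\sup_{0\le s\le t}\;\sup_{f\,\text{density}}\Big\{\int V_s f\,d\nu^n\;-\;\frac{kn^{1+\theta}}{A}\big\langle\sqrt f,(-\mc L_n)\sqrt f\big\rangle_{\nu^n}\Big\}\;=\;0\,,
\end{equation*}
which is exactly the variational problem of Lemma~\ref{repinf} with the coefficient $kn^{\beta}/A$ replaced by $kn^{1+\theta}/A$.

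From this point the computation is the one already carried out: the telescoping identity for $\eta((i-1)n)+\eta(in-1)-\frac2n\sum_{x}\eta(x)$ over each box, the change of variables $\eta\mapsto\eta^{x-1,x}$ (for which $\nu^n$ is invariant), and Young's inequality produce, for every $B>0$, the bound $\int V_s f\,d\nu^n\le \frac{k}{B}\,\mf D(\sqrt f)+Bk^2n\,\|G\|_\infty^2$. Using $\langle\sqrt f,(-\mc L_n)\sqrt f\rangle_{\nu^n}=\frac12\mf D(\sqrt f)$, I would then pick $B=2A/n^{1+\theta}$ so that the two Dirichlet-form contributions cancel, which leaves only the error term $Bk^2n\,\|G\|_\infty^2=2Ak^2n^{-\theta}\,\|G\|_\infty^2$. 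With $k$ fixed, the choice $A=n^{\theta/2}$ makes both this term and the entropy term $O(1/A)$ of order $n^{-\theta/2}$, hence they vanish as $n\to\infty$; this finishes the argument.

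I do not expect a genuine obstacle here. The content is simply that the sub-critical scale $k^2n^{2+\theta}$ (any $\theta>0$, with $k$ fixed) is already strong enough to perform the same replacement, and in fact with more slack than at the critical scale $k^2n^{1+\beta}$: one only needs $A\to\infty$ together with $Ak^2=o(n^\theta)$, which is automatic for fixed $k$ (and more generally would go through for $k=o(n^{\theta/2})$, though that generality is not needed in this section). The only place requiring any care is the bookkeeping of the exponents, to make sure that no hidden $k$-dependence spoils the two limiting statements; this is where I would double-check the constants, but for $k$ fixed it is immediate.
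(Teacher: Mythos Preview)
Your proposal is correct and follows exactly the approach the paper intends: the paper's proof of Corollary~\ref{corsmall} is the single sentence ``Following almost the same argument, one can easily obtain the next result,'' and you have carried out precisely that argument, correctly tracking that the speed-up $k^2n^{2+\theta}$ replaces the coefficient $kn^{\beta}/A$ in \eqref{repa2} by $kn^{1+\theta}/A$ and choosing $A,B$ accordingly. Your bookkeeping (including the factor $\tfrac12$ relating $\langle\sqrt f,(-\mc L_n)\sqrt f\rangle_{\nu^n}$ to $\mf D(\sqrt f)$ and the final choice $A=n^{\theta/2}$) is in fact more explicit than the paper's, and the side remark that the argument would go through for $k=o(n^{\theta/2})$ is a correct observation.
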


Here we also give the proof of Proposition \ref{mixing}, because it is quite similar to the proof of the previous lemma.

\begin{proof}[Proof of Proposition \ref{mixing}]
	Using almost the same arguments as in the proof of Lemma \ref{repinf}, we see that the expectation in the statement is bounded from above by
	$$ C(T)\Big\{ \frac{1}{A} \,+\,\frac{1}{kB}\mf D(\sqrt{f})\,+\, \frac{1}{4}B n\|G\|_\infty^2\,-\,\frac{k n^{1+\theta}}{A}\mf  D(\sqrt{f}) \Big\} $$
	for any $A, B>0$. Choosing $A=kn^{\theta/2}$ and $B=k^{-1}n^{-(2+\theta)/2}$, we finish the proof.
\end{proof}

\subsection{Tightness}
Let $k$ be fixed. Denote by $\bb Q_{\mu^n}$ the probability measure on $D([0,T],\mc M)$ corresponding to  the stochastic process $\pi^n$ defined in \eqref{pidef}, speeded up by $k^2 n^{1+\beta}$ and starting from $\mu^n$.

\begin{proposition}
	The sequence $\{\bb Q_{\mu^n}\}_{n\in\bb N}$  tight with respect to the uniform topology.
\end{proposition}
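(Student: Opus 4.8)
The plan is to reduce tightness of $\{\bb Q_{\mu^n}\}_{n}$ to a modulus-of-continuity estimate for the box averages, and to obtain that estimate from the Dynkin decomposition. Write $Y^n_t(i):=\frac1n\sum_{x=in}^{(i+1)n-1}\eta_t(x)$ for $i\in\bb T_k$, so that $\pi^n_t=\frac1k\sum_{i\in\bb T_k}Y^n_t(i)\,\delta_{i/k}$. Since $k$ is fixed, every $\pi^n_t$ is a measure on the fixed finite set $\{0,\frac1k,\dots,\frac{k-1}k\}$ with total mass at most one, hence lies in the compact space $\mc M$; on such measures the weak topology is equivalent to convergence of the vector $(Y^n_t(i))_{i\in\bb T_k}\in[0,1]^k$. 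Consequently, tightness of $\{\bb Q_{\mu^n}\}_n$ in $D([0,T],\mc M)$ for the uniform topology, with all limit points supported on $C([0,T],\mc M)$, follows once we show that for each $i\in\bb T_k$ and each $\eps>0$,
\[
\lim_{\delta\downarrow0}\ \limsup_{n\to\infty}\ \bb P_{\mu^n}\Big(\sup_{\substack{0\le s\le t\le T\\ t-s\le\delta}}\big|Y^n_t(i)-Y^n_s(i)\big|>\eps\Big)=0 .
\]

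Fix $i\in\bb T_k$. By Dynkin's formula, $M^n_t(i):=Y^n_t(i)-Y^n_0(i)-\int_0^t (k^2n^{1+\beta}\mc L_n Y^n(i))(\eta_s)\,ds$ is a martingale started at $0$. An exchange along a bond contained in a single box leaves $Y^n(i)$ unchanged, so only the two slow bonds at the endpoints of box $i$ contribute, and a direct computation gives
\[
(k^2n^{1+\beta}\mc L_n Y^n(i))(\eta)=k^2\alpha\big[\eta(in-1)-\eta(in)+\eta((i+1)n)-\eta((i+1)n-1)\big],
\]
which is bounded in absolute value by $4k^2\alpha$, uniformly in $n$ and $\eta$; hence the drift $t\mapsto\int_0^t(k^2n^{1+\beta}\mc L_nY^n(i))(\eta_s)\,ds$ is $4k^2\alpha$-Lipschitz, so its oscillation over windows of length $\delta$ is at most $4k^2\alpha\,\delta$, deterministically. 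For the martingale part, the carr\'e du champ identity gives
\[
\langle M^n(i)\rangle_t=\int_0^t k^2n^{1+\beta}\sum_{x\in\bb T_{nk}}\xi^{nk}_{x,x+1}\big(Y^n_s(i)(\eta^{x,x+1})-Y^n_s(i)(\eta_s)\big)^2\,ds ;
\]
again only the two slow bonds bounding box $i$ contribute, and across such a bond $Y^n(i)$ changes by at most $1/n$ while the bond has rate $\alpha n^{-\beta}$, so the integrand is at most $k^2n^{1+\beta}\cdot 2\alpha n^{-\beta}\cdot n^{-2}=2\alpha k^2/n$. Thus $\langle M^n(i)\rangle_T\le 2\alpha k^2 T/n$, and by Doob's inequality $\bb E_{\mu^n}\big[\sup_{0\le t\le T}|M^n_t(i)|^2\big]\le 8\alpha k^2T/n\to0$, whence $\sup_{0\le t\le T}|M^n_t(i)|\to0$ in $\bb P_{\mu^n}$-probability.

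Combining these, for $0\le s\le t\le T$ with $t-s\le\delta$ one has $|Y^n_t(i)-Y^n_s(i)|\le 4k^2\alpha\,\delta+2\sup_{0\le r\le T}|M^n_r(i)|$, so choosing $\delta<\eps/(8k^2\alpha)$ bounds the probability in the display by $\bb P_{\mu^n}\big(\sup_r|M^n_r(i)|>\eps/4\big)\to0$. This verifies the modulus-of-continuity condition for every $i\in\bb T_k$, and together with $Y^n_t(i)\in[0,1]$ it gives tightness of each $\{Y^n(i)\}_n$ in $C([0,T],[0,1])$, hence of $\{\bb Q_{\mu^n}\}_n$ in the uniform topology by the reduction above. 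The only genuinely model-specific ingredient is the pair of elementary computations in the second paragraph: it is precisely because the time scale $k^2n^{1+\beta}$ cancels the slow-bond rate $\alpha n^{-\beta}$, up to the factor $n$ from the box normalization, that the drift is of order one and the quadratic variation of order $1/n$ (a strictly faster time scale would make the drift diverge). I do not expect any further difficulty; everything else is the standard tightness argument for a measure-valued process with compact state space.
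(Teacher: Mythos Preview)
Your proof is correct and follows essentially the same strategy as the paper: Dynkin decomposition, a uniform bound on the drift (the paper obtains it via a summation by parts against a smooth test function $G$, you obtain it directly for each box average since $k$ is fixed), and a quadratic variation of order $1/n$ controlled by Doob's inequality. The only cosmetic difference is that the paper tests $\pi^n$ against smooth $G$ and invokes Arzel\`a--Ascoli for the integral term, whereas you work with the coordinates $Y^n(i)$ directly; these are equivalent here precisely because $k$ is fixed.
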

\begin{proof}
	To prove the tightness of $\pi^n$, it is enough to prove the tightness of $\langle \pi^n, G\rangle $ for any smooth function $G:\bb T \to \bb R$.  Consider the Dynkin's martingale
	\begin{equation}\label{dynmar}
		M_t^n(G)\;=\;  \langle \pi^n_t, G\rangle-\langle \pi^n_0, G\rangle - k^2 n^{1+\beta}\int_0^t \mc L_n \langle \pi^n_s, G\rangle ds\,.
	\end{equation}
	A straightforward computation gives that
	\begin{equation}\label{LnpiG}
		k^2n^{1+\beta}\,\mc L_n\<\pi^n_s, G\> \;=\;\frac{k^2n^{1+\beta}}{kn}\sum_{i=0}^{k-1}G\big(\frac{i}{k}\big)\frac{\alpha}{n^\beta}\Big[\eta((i+1)n)-\eta((i+1)n-1)+\eta\big(in-1\big)-\eta\big(in\big)\Big].
	\end{equation}
	Performing a summation by parts and recalling that $G$ is smooth, one can check that
	\begin{equation}\label{integral_part}
	\big\vert k^2n^{1+\beta}\,\mc L_n\<\pi^n_s, G\> \big\rvert \;\leq\; C
	\end{equation}
	for some constant $C$ depending on $\alpha$ and $G(\cdot)$ but not depending on $n$.
	The quadratic variation of the above martingale can be explicitly computed:
	\begin{equation*}
		\begin{split}
			\langle M^n \rangle_t =\, &\frac{k^2n^{1+\beta}}{k^2n^2}  \int_0^t \frac{\alpha}{n^\beta}\sum_{i=0}^{k-1}  \eta_s((i+1)n-1)\big(1-\eta_s((i+1)n)\big)\big[ G\big(\frac{i+1}{k}\big) -G\big(\frac{i}{k}\big)\big]^2 ds\\
			+\,& \frac{k^2n^{1+\beta}}{k^2n^2} \int_0^t \frac{\alpha}{n^\beta}\sum_{i=0}^{k-1}  \eta_s(in)\big(1-\eta_s(in-1)\big) \big[G\big(\frac{i-1}{k}\big) - G\big(\frac{i}{k}\big)\big]^2  ds.
		\end{split}
	\end{equation*}
	This implies that $\langle M^n \rangle_t \leq Ctn^{-1}k^{-1}$ where $C$ is a constant depending on $\alpha$ and $G(\cdot)$.

	In view of \eqref{dynmar}, the tightness of $\langle \pi^n, G\rangle $ follows from the tightness of the martingale $M_t^n(G)$ and the tightness of the additive functional $k^2 n^{1+\beta}\int_0^t \mc L_n \langle \pi^n_s, G\rangle ds$. The martingale is tight with respect to the uniform topology because, by Doob's inequality and the bound $\langle M^n \rangle_t \leq Ctn^{-1}k^{-1}$,  for every $\tau>0$,
	\begin{equation*}
		\lim_{n\to\infty}\bb P_{\mu_n}\Big[ \sup_{0\leq t\leq T}\big\lvert M_t^n(G)  \big\rvert >\tau\Big]\;\leq\;	\lim_{n\to\infty}\frac{1}{\tau}\bb E_{\mu_n}\Big[ \big\lvert M_T^n(G)  \big\rvert^2\Big]\;=\;0\,.
	\end{equation*}
	On the other hand,  the additive functional part is relatively compact in the uniform topology in view of \eqref{integral_part} and the Arzel\'a-Ascoli Theorem, thus tight by Prohorov's Theorem.
	\end{proof}
Let us use the same notation $\bb Q_{\mu^n}$ to denote the probability measure on $D([0,T], \mathcal{M})$ corresponding to the stochastic process $\pi^n$, as defined in \eqref{pidef}, but accelerated by $k^2 n^{2+\delta}$ with $\delta < \beta - 1$, and starting from $\mu^n$. By applying similar arguments as before, we can also conclude that the sequence $\{\bb Q_{\mu^n}\}_{n \in \bb N}$ is tight with respect to the uniform topology.

\subsection{Characterization of limit points}
Let $\bb Q^*$ be the weak limit
of some convergent subsequence  $\{\bb Q_{\mu^{n_j}}\}_{j\in \bb N}$ of the sequence  $\{\bb Q_{\mu^n}\}_{n\in \bb N}$. To simplify the notation, we will denote this subsequence by $\{\bb Q_{\mu^n}\}_{n \in \bb N}$.

First note that, as a probability distribution on the discrete set $\bb T_k$, the measure $\bb Q^*$ is actually absolutely continuous with respect to the uniform distribution.

  We claim that every limit point $\bb Q^*$ is concentrated on trajectories that at time $0$ are equal to
$$\sum_{i\in\bb T_k}\Big( k\int_{i/k}^{(i+1)/k}\gamma(u)du\Big) \delta_{i/k}\,.$$
Indeed, by the weak convergence of $\bb Q_{\mu^n}$ towards $\bb Q^*$, and the fact that $\mu^n$ is a product Bernoulli measure associated to the profile $\gamma$ as defined in~\eqref{mun} one can show that, for every $\varepsilon>0$,
\begin{equation*}
	\begin{split}
		&\bb Q^*\Big[\Big\lvert \frac{1}{kn}\sum_{i=0}^{k-1}\Big(\sum_{x=in+1}^{(i+1)n}\eta_0(x)\Big) G\big(\frac{i}{k}\big) \,-\, \frac{1}{k}\sum_{i\in\bb T_k}\Big( k\int_{i/k}^{(i+1)/k}\gamma(u)du\Big) G\big(\frac{i}{k}\big)  \Big\rvert >\varepsilon\Big]\\
		\leq\,& \liminf_{n\to\infty} \bb Q_{\mu^n}\Big[\Big\lvert \frac{1}{kn}\sum_{i=0}^{k-1}\Big(\sum_{x=in+1}^{(i+1)n}\eta_0(x)\Big) G\big(\frac{i}{k}\big) \,-\, \frac{1}{k}\sum_{i\in\bb T_k}\Big( k\int_{i/k}^{(i+1)/k}\gamma(u)du\Big) G\big(\frac{i}{k}\big)  \Big\rvert >\varepsilon\Big]\\
		=\,& \liminf_{n\to\infty} \mu^n \Big [\Big\lvert \frac{1}{kn}\sum_{i=0}^{k-1}\Big(\sum_{x=in+1}^{(i+1)n}\eta_0(x)\Big) G\big(\frac{i}{k}\big) \,-\, \frac{1}{k}\sum_{i\in\bb T_k}\Big( k\int_{i/k}^{(i+1)/k}\gamma(u)du\Big) G\big(\frac{i}{k}\big)  \Big\rvert >\varepsilon\Big]\;=\;0\,.\\
	\end{split}
\end{equation*}

Let us first consider the case that $\bb Q_{\mu^n}$ is  the probability measure on $D([0,T], \mc M)$ corresponding to  the stochastic process $\pi^n$ defined in \eqref{pidef}, speeded up by $k^2 n^{1+\beta}$.  For any function $G\in C^{2}( \bb T)$,  we  claim  that
\begin{equation}\label{Q1}
	\bb Q^{*}\Big\{\,\pi:\,
	\<\pi_t, G\> \,-\,
	\<\pi_0,G\> \,-\,
	\int_0^t \langle \pi_s,\alpha \Delta_k G \rangle ds \;=\;0,\,\forall \, t\in[0,T]\, \Big\}\;=\;1\,.
\end{equation}
To prove this, it suffices to show that
\begin{equation*}
	\bb Q^{*} \Big\{\,\pi: \, \sup_{0\le t\le T}
	\Big\vert \<\pi_t, G \> \,-\,
	\<\pi_0,G \> \,-\,
	\int_0^t \langle \pi_s, \alpha \Delta_k G \rangle ds\, \Big\vert
	\, > \, \delta\, \Big\}\,=0\,,
\end{equation*}
for every $\delta >0$. Since the supremum is a continuous function in the Skorohod metric, by Portmanteau's Theorem, the probability above is smaller or equal  than
\begin{equation*}
	\liminf_{n\to\infty} \bb Q_{\mu_n} \Big\{\,\pi: \, \sup_{0\le t\le T}
	\Big\vert \<\pi_t, G\> \,-\,
	\<\pi_0,G\> \,-\,
	\int_0^t \langle \pi_s, \alpha \Delta_k G \rangle ds \,\Big\vert
	\, > \, \delta\,\Big\}\,.
\end{equation*}
Since $\bb Q_{\mu^n}$  is the measure on the space $\mc D([0,T], \mc M)$ induced by $\bb P_{\mu_n}$ via the empirical measure, we can rewrite the expression above as
\begin{equation*}
	\liminf_{n\to\infty} \bb P_{\mu^n} \Big\{\,\eta_\bola\,:\, \sup_{0\le t\le T}
	\Big\vert \<\pi_t^n, G \> \,-\,
	\<\pi_0^n,G \> \,-\,
	\int_0^t  \,  \langle \pi^n_s,\alpha \Delta_k G \rangle \,ds \,\Big\vert
	\, > \, \delta\,\Big\}\,.
\end{equation*}

Adding and subtracting $k^2n^{1+\beta}\,\mc L_n\<\pi^n_s, G\>$ to the integral term above, we can see that the previous expression is bounded from above by the sum of
\begin{equation}\label{eq413}
	\begin{split}
		&\limsup_{n\to\infty} \bb P_{\mu^n} \Big\{ \, \sup_{0\le t\le T}
		\Big\vert \<\pi_t^n, G \> \,-\,
		\<\pi_0^n,G \> \,-\,
		\int_0^t  \,  k^2n^{1+\beta}\,\mc L_n\<\pi^n_s, G\> \,ds\, \Big\vert
		\, > \, \delta/2\, \Big\}\\
	\end{split}
\end{equation}
and
\begin{equation}\label{eq414}
	\begin{split}
		& \limsup_{n\to\infty} \bb P_{\mu^n} \Big\{\, \sup_{0\le t\le T} \Big\vert \int_0^t
		\big(k^2n^{1+\beta}\,\mc L_n\<\pi^n_s, G\> - \alpha \<\pi_s^n ,\Delta_k G \>\big)\,ds \,\Big\vert
		\, > \, \delta/2 \,\Big\}\,.
	\end{split}
\end{equation}
Consider the Dynkin's martingale
$$M_t^n\;=\; \<\pi_t^n, G \> -
\<\pi_0^n,G \> -
\int_0^t  \Big\{  k^2n^{1+\beta}\,\mc L_n\<\pi^n_s, G\>\Big\}ds\,.$$
As in the proof of tightness, one can bound the quadratic variation of $M_t^n$ by $Ctn^{-1}k^{-1}$ and then using it to show that  expression \eqref{eq413} is null.

We now show that \eqref{eq414} also vanishes. Recalling \eqref{LnpiG}, from the Replacement Lemma~\ref{repinf}, we have that
\begin{equation*}
	\begin{split}
		\limsup_{n\to\infty}\, &\bb P_{\mu^n} \Big\{\, \sup_{0\le t\le T} \Big\vert \int_0^t
	k^2n^{1+\beta}\,\mc L_n\<\pi^n_s, G\>\\ -
	&\alpha k\sum_{i=0}^{k-1}G\big(\frac{i}{k}\big)\Big[\frac{1}{n}\sum_{x=(i+1)n}^{(i+2)n-1}\eta_s(x)+\frac{1}{n}\sum_{x=(i-1)n}^{in-1}\eta_s(x)-\frac{2}{n}\sum_{x=in}^{(i+1)n-1}\eta_s(x)\Big]\,ds \,\Big\vert
		\, > \, \delta/4 \,\Big\}=0\,.
	\end{split}
\end{equation*}
Performing a summation by parts, it is then enough to prove that
\begin{equation*}
\begin{split}
\limsup_{n\to\infty} \bb P_{\mu^n} \Big\{\, \sup_{0\le t\le T} \Big\vert \int_0^t
&\alpha k\sum_{i=0}^{k-1}\eta_s(x)\big\{G\big(\frac{i+1}{k}\big)+G\big(\frac{i-1}{k}\big)-2G\big(\frac{i}{k}\big)\big\}\\
- &\alpha \<\pi_s^n ,\Delta_k G \>\big)\,ds \,\Big\vert
\, > \, \delta/4 \,\Big\}\;=\;0\,.
\end{split}
\end{equation*}
This holds trivially because the expression inside the absolute value is zero.
Since $k$ is fixed,  there is a unique solution of $\eqref{dHDL}$, and hence uniqueness follows.

We now consider the case that $k$ is fixed and $\bb Q_{\mu^n}$ is the probability measure on $D([0,T],\mc M)$ corresponding to  the stochastic process $\pi^n$ defined in \eqref{pidef}, speeded up by $k^2 n^{2+\theta}$ with $\theta<\beta-1$. In this case we claim that
\begin{equation}
	\bb Q^{*}\Big\{\,\pi:\,
	\<\pi_t, G\> \,-\,
	\<\pi_0,G \> \;=\;0,\,\forall \, t\in[0,T]\, \Big\}\;=\;1\,
\end{equation}
for any function $G\in C^{2}(\bb T)$.
Similar to what we did before, it is sufficient to show that
\begin{equation}\label{eq415}
	\begin{split}
		&\limsup_{n\to\infty} \bb P_{\mu^n} \Big\{ \, \sup_{0\le t\le T}
		\Big\vert \<\pi_t^n, G \> \,-\,
		\<\pi_0^n,G \> \,-\,
		\int_0^t  \,  k^2n^{2+\theta}\,\mc L_n\<\pi^n_s, G\> \,ds\, \Big\vert
		\, > \, \delta/2\, \Big\}=0\,,\\
	\end{split}
\end{equation}
and
\begin{equation}\label{eq416}
	\limsup_{n\to\infty} \bb P_{\mu^n} \Big\{\, \sup_{0\le t\le T} \Big\vert \int_0^t
	k^2n^{2+\theta}\,\mc L_n\<\pi^n_s, G\> \,ds \,\Big\vert
	\, > \, \delta/2 \,\Big\}=0\,.
\end{equation}
To prove \eqref{eq415}, we can define the martingale
$$M_t^n\;=\; \<\pi_t^n, G \> \,-\,
\<\pi_0^n,G \> \,-\,
\int_0^t  \,k^2n^{2+\theta}\,\mc L_n\<\pi^n_s, G\>\,ds\,$$
and  examine its quadratic variation, which vanishes as $n\to\infty$. In order to prove \eqref{eq416}, using  Corollary~\ref{corsmall} and performing a summation by parts, it is enough to guarantee that
\begin{align*}
	\limsup_{n\to\infty} \bb P_{\mu^n} \Big\{\, \sup_{0\le t\le T} \Big\vert \int_0^t
	&\frac{\alpha k}{n^{\beta-\theta-1}}\sum_{i=0}^{k-1}\frac{1}{n}\Big(\sum_{x=in}^{(i+1)n-1}\eta_s(x)\Big)\big\{G\big(\frac{i+1}{k}\big)+G\big(\frac{i-1}{k}\big)-2G\big(\frac{i}{k}\big)\big\}ds\Big\vert\\
	& > \, \delta/2 \,\Big\}\;=\;0\,,
\end{align*}
which holds obviously because $\theta+1<\beta$. Thus, we can conclude.


\section{Relative entropy method and Proof of Theorem~\ref{infinite}}\label{s4}

The purpose of this section is to prove Theorem~\ref{infinite}. The proof is based on the refined Yau's relative entropy method by Jara and Menezes. In Section~\ref{s41} we define the time-dependent reference measure and estimate the corresponding relative entropy. In Section~\ref{s42} we present some auxiliary results that will be used in Section \ref{s43}, where we estimate the terms coming from the entropy production. In Section~ \ref{s44} we use the bound on the relative entropy to derive the hydrodynamic limit.

\subsection{Main estimate}\label{s41}
Fix $T>0$. Recall that the process has been speeded up by $k^2n^{1+\beta}$. Denote, for every $t\in(0,T]$, by $\mu^n_t$ the distribution of the accelerated process at time $t$. Let us define the time-dependent reference measure $\nu_t^n$, $0\leq t\leq T$. For every $x\in\bb T_{kn}$, let $m_n(x)$ be the index of the box which $x$ belongs to.
Let $\nu^n_t$ be the Bernoulli product measure such that
\begin{equation}\label{nu}
	\nu_t^n(\eta: \eta(x)=1 )\;=\; \rho_t\big(m_n(x)\big)\;=\; \rho_t\big(\frac{i}{k}\big),\quad \forall \,\, in\leq x\leq (i+1)n-1,
\end{equation}
where $\rho_t(\cdot)$ is the solution of heat equation given in \eqref{cHDL}. Recall that we are denoting  by $\nu^n$  the uniform measure on $\Omega_{kn}$ of constant parameter equal to $1/2$.
Let $f_t^n$ be the Radon-Nikodym derivative of $\mu_t^n$ with respect to $\nu_t^n$:
$$f_t^n\;:=\;\frac{d\mu_t^n}{d\nu_t^n}$$
and let $\Psi_t^n$ be the Radon-Nikodym derivative of $\nu^n_t$ with respect to $\nu^n$, that is,
$$\Psi_t^n\;:=\;\frac{d\nu^n_t}{d\nu^n}\,.$$

The next lemma is a version by Jara and Menezes (see Lemma A.1 of \cite{jm}) of the so-called Yau's inequality (see Lemma~1.4 of Chapter~6 in \cite{kl}), which provides an upper bound for the time derivative of the relative entropy $ H'(\mu^n_t|\nu_t^n)$ in terms of $f_t^n$ and $\Psi_t^n$. We note that while the standard Yao's inequality takes the invariant measure $\nu^n$ as reference, here the reference is the Bernoulli product measure $\nu^n_t$.
\begin{lemma}[Refined Yao's Inequality, Lemma A.1 of \cite{jm}]\label{deriv}
	For every $t>0$,
	$$ \partial_t H(\mu^n_t|\nu_t^n)\;\leq\; -k^2n^{1+\beta}\mf D(\sqrt{f_t^n}; \nu_t^n)+ \int \Big\{k^2n^{1+\beta} \mc L^*_{n,t}1-\partial_t \log \Psi_t^n\Big\} f_t^n d\nu_t^n\,,$$
	where $\mc L^*_{n,t}$ is the adjoint generator of $\mc L_n$ with respect to the measure $\nu^n_t$.
\end{lemma}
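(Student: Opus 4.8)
The plan is to differentiate $t\mapsto H(\mu^n_t|\nu^n_t)$ directly, following the argument of \cite{jm}, separating the contribution coming from the evolution of $\mu^n_t$ from the one coming from the time dependence of the reference $\nu^n_t$. Since $n$ and $k$ are fixed, $\Omega_{nk}$ is a finite set, so $t\mapsto\mu^n_t$ solves the linear forward Kolmogorov equation, which in weak form reads $\frac{d}{dt}\bb E_{\mu^n_t}[h]=k^2n^{1+\beta}\bb E_{\mu^n_t}[\mc L_n h]$ for every $h:\Omega_{nk}\to\bb R$, while $t\mapsto\Psi_t^n$ is smooth because $\rho_t$ solves \eqref{cHDL} with smooth initial data. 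Moreover $\mu^n_t$ and $\nu^n_t$ charge every configuration (the former since $\mu^n$ has full support and the exclusion dynamics is irreducible on each particle-number sector, the latter since $\varepsilon_0<\rho_t<1-\varepsilon_0$), so $f_t^n:=d\mu_t^n/d\nu_t^n$ is bounded and bounded away from zero and every logarithm below is well defined. Writing $H(\mu^n_t|\nu^n_t)=\sum_{\eta}\mu^n_t(\eta)\big[\log\mu^n_t(\eta)-\log\nu^n_t(\eta)\big]$, using $\sum_\eta\partial_t\mu^n_t(\eta)=0$ and $\partial_t\log\nu_t^n=\partial_t\log\Psi_t^n$ (as $\nu^n$ does not depend on $t$), together with $\sum_\eta\partial_t\mu^n_t(\eta)\log f_t^n(\eta)=k^2n^{1+\beta}\bb E_{\mu^n_t}[\mc L_n(\log f_t^n)]$, one obtains
$$\partial_t H(\mu^n_t|\nu^n_t)\;=\;k^2n^{1+\beta}\int f_t^n\,\mc L_n(\log f_t^n)\,d\nu_t^n\;-\;\int f_t^n\,\partial_t\log\Psi_t^n\,d\nu_t^n\,.$$

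The second term is already the $-\partial_t\log\Psi_t^n$ contribution in the statement, so it remains to bound the entropy production $k^2n^{1+\beta}\int f_t^n\,\mc L_n(\log f_t^n)\,d\nu_t^n$. The key input is the elementary convexity bound $\log u\le 2(\sqrt u-1)$, $u>0$, which applied bond by bond gives, for every $\eta$ and every $x\in\bb T_{nk}$,
$$f_t^n(\eta)\log\tfrac{f_t^n(\eta^{x,x+1})}{f_t^n(\eta)}\;\le\;2\sqrt{f_t^n(\eta)}\Big(\sqrt{f_t^n(\eta^{x,x+1})}-\sqrt{f_t^n(\eta)}\Big)\;=\;\big(f_t^n(\eta^{x,x+1})-f_t^n(\eta)\big)-\Big(\sqrt{f_t^n(\eta^{x,x+1})}-\sqrt{f_t^n(\eta)}\Big)^2,$$
the last identity being $2\sqrt b(\sqrt a-\sqrt b)=(a-b)-(\sqrt a-\sqrt b)^2$. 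Multiplying by $\xi^{nk}_{x,x+1}$, summing over $x$ and integrating against $\nu_t^n$, the first summand on the right yields $\int\mc L_n f_t^n\,d\nu_t^n=\int(\mc L^*_{n,t}1)f_t^n\,d\nu_t^n$ — which is exactly the definition of the adjoint $\mc L^*_{n,t}$ applied with test function $1$ — while the second summand yields $-\mf D(\sqrt{f_t^n};\nu_t^n)$. Hence
$$k^2n^{1+\beta}\int f_t^n\,\mc L_n(\log f_t^n)\,d\nu_t^n\;\le\;-k^2n^{1+\beta}\mf D(\sqrt{f_t^n};\nu_t^n)\;+\;k^2n^{1+\beta}\int(\mc L^*_{n,t}1)f_t^n\,d\nu_t^n\,,$$
and substituting this into the previous identity proves the lemma.

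The computation is short, and its substance is the single convexity inequality together with the algebraic identity above, which convert the entropy production into minus a Dirichlet form plus a linear remainder — the same mechanism as in the classical Yau inequality (Lemma~1.4 of Chapter~6 in \cite{kl}), the only difference being that the non-invariance of $\nu_t^n$ leaves behind the two explicit terms $k^2n^{1+\beta}\mc L^*_{n,t}1$ and $-\partial_t\log\Psi_t^n$ instead of nothing. I do not anticipate a genuine obstacle here; the points that need care are those flagged above — that on the finite state space all densities are bounded, bounded away from zero and smooth in $t>0$, so the differentiation is legitimate, and the bookkeeping that correctly attributes the remainder to the moving reference.
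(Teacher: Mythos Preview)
Your argument is correct and is precisely the standard derivation of the refined Yau inequality. Note, however, that the paper does not give its own proof of this lemma: it simply cites Lemma~A.1 of \cite{jm} and uses the result as a black box. What you wrote is essentially that cited proof --- differentiate the entropy, split the contribution of the evolving law from that of the moving reference, and use $a\log(b/a)\le(b-a)-(\sqrt b-\sqrt a)^2$ bond by bond to extract the Dirichlet form and the $\mc L^*_{n,t}1$ remainder --- so there is nothing to contrast.
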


Throughout this section, the constant $C$ may change from line to line, but never depends on $k$ and $n$. Since $\gamma$ is smooth, we can assume that there exists a constant $\kappa>0$ such that
$$\lvert\rho_t(u+\epsilon)-\rho_t(u)\rvert\;\leq\; \epsilon\kappa$$
for every $t\in[0,T]$, every $\epsilon\in(0,1)$ and every $u\in\bb T$.
Now we apply the previous lemma to prove the following proposition.
\begin{proposition}\label{deriv estim}
	There is a constant $C>0$ independent of $k$ and $n$ such that
	$$\partial_t  H(\mu^n_t|\nu_t^n)\;\leq\;CH(\mu^n_t|\nu_t^n)+ o(kn)\,, $$
\end{proposition}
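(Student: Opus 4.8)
The plan is to estimate the right-hand side of Lemma~\ref{deriv}. Set $\chi(\rho)=\rho(1-\rho)$, $g(\rho)=\log\frac{\rho}{1-\rho}$, $\lambda_t(u)=g(\rho_t(u))$, and abbreviate $\rho_i=\rho_t(i/k)$, $\lambda_i=\lambda_t(i/k)$, $\bar\eta_i=\frac1n\sum_{x=in}^{(i+1)n-1}\eta(x)$, and $\zeta_i=\bar\eta_i-\rho_i$. In Lemma~\ref{deriv} we keep $-k^2n^{1+\beta}\mf D(\sqrt{f_t^n};\nu_t^n)$ as a negative reservoir and must bound the entropy production term $\int\{k^2n^{1+\beta}\mc L^*_{n,t}1-\partial_t\log\Psi_t^n\}f_t^n\,d\nu_t^n$. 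First I would compute its two ingredients. Since $\nu_t^n$ is a product Bernoulli measure which is \emph{constant on each box}, it is reversible for every bond internal to a box, so only the $k$ slow bonds survive in $\mc L^*_{n,t}1$, giving
$$k^2n^{1+\beta}\,\mc L^*_{n,t}1(\eta)\;=\;\alpha k^2n\sum_{i\in\bb T_k}\Big[\exp\big\{(\lambda_{i-1}-\lambda_i)\big(\eta(in)-\eta(in-1)\big)\big\}-1\Big]\,.$$
A direct differentiation of $\Psi_t^n$, using the heat equation $\partial_t\rho_t=\alpha\Delta\rho_t$, yields $\partial_t\log\Psi_t^n(\eta)=\alpha n\sum_{i\in\bb T_k}\frac{\Delta\rho_t(i/k)}{\chi(\rho_i)}\,\zeta_i$, and here the continuous Laplacian can be replaced by the discrete $\Delta_k$ up to an $o(kn)$ error, thanks to the smoothness of $\rho_t$ and $k\to\infty$.

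Next I would Taylor-expand the exponentials, which is legitimate since $|\lambda_{i-1}-\lambda_i|=O(1/k)$, the cubic remainder contributing only $o(kn)$. I would then replace the boundary occupations $\eta(in)$, $\eta(in-1)$ by the box averages $\bar\eta_i$, $\bar\eta_{i-1}$ in both the first- and second-order terms. Exactly as in Lemma~\ref{repinf} and Proposition~\ref{mixing} --- a telescoping identity inside a box followed by Young's inequality --- this replacement costs at most $\varepsilon\,k^2n^{1+\beta}\mf D(\sqrt{f_t^n};\nu_t^n)+o(kn)$; the $o(kn)$ error is of order $kn^{2-\beta}$, negligible precisely because $\beta>1$, while the $\varepsilon\,k^2n^{1+\beta}\mf D$ is swallowed by the negative Dirichlet term of Lemma~\ref{deriv} once $\varepsilon<1$.

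After these replacements I would write $\bar\eta_i=\rho_i+\zeta_i$ and sort the resulting expression by its degree in the fluctuations $\zeta_i$. The degree-zero part is deterministic and \emph{a priori} of order $kn$, but pushing the Taylor and Euler--Maclaurin (for smooth periodic functions) expansions one order further it equals $\alpha kn\int_{\bb T}\big((\lambda_t')^2\chi(\rho_t)-\lambda_t'\rho_t'\big)\,du+o(kn)$, which vanishes identically because $\lambda_t'=\rho_t'/\chi(\rho_t)$. The degree-one part, after a discrete summation by parts rewriting $\sum_i(\lambda_{i-1}-\lambda_i)(\zeta_i-\zeta_{i-1})$ as $\sum_i\zeta_i\,\Delta_k\lambda_i$, combines with $-\partial_t\log\Psi_t^n$ and with the linear-in-$\zeta$ part of the second-order term; using the discrete Leibniz expansion $\Delta_k\lambda_i=\Delta_k\rho_i/\chi(\rho_i)+g''(\rho_i)(\rho_t'(i/k))^2+O(1/k)$ together with the heat equation $\partial_t\rho_t=\alpha\Delta\rho_t$, the leading coefficients cancel and the remainder, with coefficients $O(1/k)$, contributes $o(kn)$ after integration against $f_t^n\,d\nu_t^n$ (as $|\zeta_i|\le1$). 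Only the purely quadratic term $-\alpha n\sum_{i\in\bb T_k}(\lambda_t'(i/k))^2\,\zeta_i\zeta_{i-1}$ survives.

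To control this leftover term I would bound $|\zeta_i\zeta_{i-1}|\le\frac12(\zeta_i^2+\zeta_{i-1}^2)$, reducing the task to estimating $\int\alpha n\sum_i\zeta_i^2\,f_t^n\,d\nu_t^n$, and then apply the entropy inequality \emph{to the whole sum at once}, with a \emph{constant} parameter $a_0>0$. Under $\nu_t^n$ the $\zeta_i$ are independent, and each $\zeta_i$ is $\frac1n$ times a sum of $n$ i.i.d.\ centred variables bounded by $1$, hence sub-Gaussian with variance proxy $O(1/n)$; the standard exponential-moment bound for squares of sub-Gaussian variables then gives $\log\bb E_{\nu_t^n}[e^{a_0\alpha n\sum_i\zeta_i^2}]=\sum_i\log\bb E_{\nu_t^n}[e^{a_0\alpha n\zeta_i^2}]=O(k)$ for $a_0$ small enough (depending only on $\alpha$), so that $\int\alpha n\sum_i\zeta_i^2\,f_t^n\,d\nu_t^n\le a_0^{-1}H(\mu^n_t|\nu_t^n)+O(k)=C\,H(\mu^n_t|\nu_t^n)+o(kn)$ with $C$ independent of $k$ and $n$. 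Collecting the three contributions and taking $\varepsilon<1$ yields the claim. I expect the delicate points to be the degree-zero and degree-one cancellations --- which require tracking the Taylor expansions of $\rho_t$ and $\lambda_t$ to second order, a careful count of the Euler--Maclaurin and discretization errors, and the simultaneous use of the identity $\lambda_t'=\rho_t'/\chi(\rho_t)$ and the equation $\partial_t\rho_t=\alpha\Delta\rho_t$ --- and the fact that the entropy inequality for the leftover quadratic term must be applied \emph{globally} with a constant parameter: a box-by-box application would leave a coefficient growing with $n$ in front of $H(\mu^n_t|\nu_t^n)$, which would be fatal for the Gr\"onwall step that closes the estimate.
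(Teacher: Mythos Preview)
Your proposal is correct, but the route you take for the quadratic remainder is genuinely different from the paper's.

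Both proofs start from Lemma~\ref{deriv} and agree on the linear-in-fluctuation piece: telescoping inside each box plus Young's inequality absorbs the replacement cost into $\varepsilon k^2n^{1+\beta}\mf D$ with an $O(kn^{2-\beta})=o(kn)$ error, exactly Lemma~\ref{p1}. The divergence is in the quadratic piece, which in the paper's variables is $-\alpha n\sum_i k^2[\rho_t(\tfrac{i-1}{k})-\rho_t(\tfrac{i}{k})]^2\,w_t(ni-1)w_t(ni)$. The paper (Lemma~\ref{p2}) invokes the Jara--Menezes flow machinery: it smooths one factor $w_t$ over a mesoscopic window $\ell=n\sqrt{k}$ \emph{crossing several boxes}, writes the smoothing error through the flow function $\phi_\ell$, and then controls the resulting pieces $V^\ell,W^\ell,Z^\ell$ via subgaussian product bounds and $\ell$-dependence (Lemmas~\ref{Vlproof}--\ref{Zlproof}). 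Your approach is more direct and elementary: you first push the same telescoping/Young replacement into the product $\eta(in)\eta(in-1)$ (which is legitimate because the auxiliary factor $\eta(in-1)$, resp.\ $\bar\eta_i$, is invariant under swaps inside box $i$, resp.\ $i-1$), so that only $-\alpha n\sum_i(\lambda_t')^2\zeta_i\zeta_{i-1}$ survives; then Cauchy--Schwarz reduces matters to $n\sum_i\zeta_i^2$, and a single global entropy inequality with a constant parameter closes, since under $\nu_t^n$ the $\zeta_i$ are independent and each is sub-Gaussian of order $O(1/n)$. This exploits the built-in box averaging (scale $n$) that the model provides and bypasses the flow lemma entirely. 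The paper's route, by contrast, follows the general-purpose Jara--Menezes template, which would transfer to situations without such a natural coarse-graining scale.

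One cosmetic point: your ``degree-zero cancellation'' is an artifact of Taylor-expanding the exponential in $\eta$ rather than in the centred variables $w_t$; in the paper's formulation $\mc L^*_{n,t}1$ is already written as a sum of first- and second-degree terms in $w_t$, so no deterministic piece ever appears. The cancellation you identify is correct, but it reflects a bookkeeping choice rather than a substantive step.
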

\begin{proof}
Using Equation (A.1) in~\cite{jm} and a straightforward computation show that
	\begin{align*}
		&\mc L^*_{n,t}\mathds{1}(\eta)\\
		&\;=\; \sum_{x\in\bb T_{kn}}\xi^{nk}_{x,x+1}\Big\{\eta(x+1)\big(1-\eta(x)\big)\frac{\nu_t^n(\eta^{x,x+1})}{\nu_t^n(\eta)}\,-\, \eta(x)\big(1-\eta(x+1)\big) \Big\}\\
		&\qquad\qquad\qquad+ \sum_{x\in\bb T_{kn}}\xi^{nk}_{x,x+1}\Big\{\eta(x)\big(1-\eta(x+1)\big)\frac{\nu_t^n(\eta^{x,x+1})}{\nu_t^n(\eta)}\,-\, \eta(x+1)\big(1-\eta(x)\big) \Big\}\\
	&\;=\;\sum_{x\in\bb T_{kn}}\xi^{nk}_{x,x+1}\Big\{\eta(x+1)\big(1-\eta(x)\big)\frac{\rho_t(m_n(x))[1-\rho_t(m_n(x+1))]}{\rho_t(m_n(x+1))[1-\rho_t(m_n(x))]}\,-\, \eta(x)\big(1-\eta(x+1)\big) \Big\}\\
		&\quad+ \sum_{x\in\bb T_{kn}}\xi^{nk}_{x,x+1}\Big\{\eta(x)\big(1-\eta(x+1)\big)\frac{\rho_t(m_n(x+1))[1-\rho_t(m_n(x))]}{\rho_t(m_n(x))[1-\rho_t(m_n(x+1))]}\,-\, \eta(x+1)\big(1-\eta(x)\big) \Big\}.
	\end{align*}
	This identity can be rewritten as
	\begin{align*}
		\mc L^*_{n,t}\mathds{1} (\eta)\;=\; &\sum_{x\in\bb T_{kn}}\xi^{nk}_{x,x+1}\big\{\rho_t(m_n(x))\,-\,\rho_t(m_n(x+1))\big\}\\
		\times&\,\bigg\{\frac{\eta(x+1)[1-\eta(x)]}{\rho_t(m_n(x+1))[1-\rho_t(m_n(x))]} \,-\, \frac{\eta(x)[1-\eta(x+1)]}{\rho_t(m_n(x))[1-\rho_t(m_n(x+1))]}  \bigg\}\,.
	\end{align*}
	Using equation (A.3) in \cite{jm}, we have that
	\begin{align*}
		\mc L^*_{n,t}\mathds{1} (\eta)\;=\; &\sum_{x\in\bb T_{kn}}\xi^{nk}_{x,x+1}\big\{\rho_t(m_n(x))\,-\,\rho_t(m_n(x+1))\big\}\times\\
		&\,\big\{w_t(x+1)-w_t(x)+(\rho_t(m_n(x+1))-\rho_t(m_n(x)))w_t(x)w_t(x+1)\big\}\,,
	\end{align*}
	where
	\begin{equation}\label{eq:w}
	w_t(x)\;:=\;\frac{\eta(x)-\rho_t(m_n(x))}{\rho_t(m_n(x))(1-\rho_t(m_n(x)))}\,.
	\end{equation}

	After a summation by parts,
	\begin{equation}\label{Lnt}
		\begin{split}
			&\mc L^*_{n,t}\mathds{1} (\eta)\;=\; \\
			&\sum_{x\in\bb T_{kn}}w_t(x)\Big[ \xi^{nk}_{x-1,x}\Big\{\rho_t(m_n(x-1))-\rho_t(m_n(x))\big\}-\xi^{nk}_{x,x+1}\big\{\rho_t(m_n(x))-\rho_t(m_n(x+1))\big\}\Big]\\
			&-\sum_{x\in\bb T_{kn}}\xi^{kn}_{x,x+1}\big[\rho_t(m_n(x+1))-\rho_t(m_n(x))\big]^2w_t(x)w_t(x+1)\,.
		\end{split}
	\end{equation}
	By definition  $m_n(x)$ is a constant for every $x$ in the same box, thus the first parcel on the right hand side of \eqref{Lnt} is equal to
	\begin{equation}\label{first_1}
		\begin{split}
			&\sum_{i=0}^{k-1}w_t(ni)\frac{\alpha}{n^\beta}\Big[\rho_t\big(\frac{i-1}{k}\big)\,-\,\rho_t\big(\frac{i}{k}\big)\Big]
			+\sum_{i=0}^{k-1}w_t(n(i+1)-1)\frac{\alpha}{n^\beta}\Big[\rho_t\big(\frac{i+1}{k}\big)\,-\,\rho_t\big(\frac{i}{k}\big)\Big]\,.
		\end{split}
	\end{equation}
	The second parcel on the right hand side of \eqref{Lnt} is equal to
	\begin{equation}\label{second_1}
		-\sum_{i=0}^{k-1}\frac{\alpha}{n^\beta}\Big[\rho_t\big(\frac{i-1}{k}\big)-\rho_t\big(\frac{i}{k}\big)\Big]^2w_t(ni-1)w_t(ni)\;.
	\end{equation}
Let us now deal with $\Psi_t^n(\eta)$, which is the Radon-Nykodim derivative between $\nu^n_t$ and $\nu^n$. By means of some simple calculations, we can check that
\begin{align*}
\Psi^n_t & \;=\; \prod_{x\in \bb T_{kn}}\Big\{
	\eta(x)2\rho_t(m_n(x))+(1-\eta(x))2(1-\rho_t(m_n(x)))\Big\}\,,
	\end{align*}
	so
\begin{align*}
\log \Psi^n_t & = \sum_{x\in \bb T_{kn}} \eta(x)\log\Big(2\rho_t(m_n(x))\Big)+(1-\eta(x))\log\Big(2(1-\rho_t(m_n(x)))\Big)\,.
\end{align*}
Thus,
	\begin{align}
			\partial_t\log\Psi_t^n(\eta)\;=\;&\sum_{x\in\bb T_{kn}}w_t(x)\partial_t\rho_t(m_n(x))\notag\\
			=\;&\alpha\sum_{x\in\bb T_{kn}}w_t(x)\Delta\rho_t(m_n(x)) \notag\\
			=\;&\alpha k^2\sum_{i=0}^{k-1}\sum_{x=in}^{(i+1)n-1}w_t(x)\Big\{\rho_t\big(\frac{i+1}{k}\big)\,+\,\rho_t\big(\frac{i-1}{k}\big)\,-\,2\rho_t\big(\frac{i}{k}\big)\Big\}\,+\, O\big(\frac{n}{k}\big)\,.\label{derivative_Psi}
	\end{align}
where we have used above that $\rho$ is the solution of the heat equation given in \eqref{infinite} and the approximation of the continuous Laplacian by the discrete one. Putting together \eqref{first_1}, \eqref{second_1} and \eqref{derivative_Psi} and writing the discrete Laplacian as the difference of discrete derivatives, we obtain that
	\begin{align}
			\int \Big\{&k^2n^{1+\beta} \mc L^*_{n,t}\mathds{1}-\partial_t \log \Psi_t^n\Big\} f_t^n d\nu_t^n\notag\\
			&=\;\alpha k^2\int \sum_{i=0}^{k-1}\sum_{x=in}^{(i+1)n-1}\big\{ w_t(ni)\,-\,w_t(x)\big\}\Big[\rho_t(\frac{i-1}{k})\,-\,\rho_t(\frac{i}{k})\Big]  f_t^n d\nu_t^n\label{longsum_1}\\
			&\quad +\alpha k^2\int \sum_{i=0}^{k-1}\sum_{x=in}^{(i+1)n-1}\{w_t(n(i+1)-1)\,-\,w_t(x)\}\Big[\rho_t(\frac{i+1}{k})\,-\,\rho_t(\frac{i}{k})\Big]f_t^n d\nu_t^n\label{longsum_2}\\
			&\quad-\alpha n\int \sum_{i=0}^{k-1}k^2\Big[\rho_t(\frac{i-1}{k})\,-\,\rho_t(\frac{i}{k})\Big]^2w_t(ni-1)w_t(ni)f_t^n d\nu_t^n\label{longsum_3}\\
			&\quad +  O\big(\frac{n}{k}\big)\,.\notag
	\end{align}
	In order to finish the proof we need to estimate \eqref{longsum_1}--\eqref{longsum_3} in terms of the Dirichlet form so that we can apply Lemma \ref{deriv}. This is the content of the technical Lemmas \ref{p1} and \ref{p2}, which will be formulated and proven in Section \ref{s43}. This finishes the proof.
\end{proof}

\begin{corollary}\label{entropy bound}
	For every $0\leq t\leq T$, it holds that
	$H(\mu_t^n|\nu_t^n)=o(kn).$
\end{corollary}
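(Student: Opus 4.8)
The plan is to integrate the differential inequality of Proposition~\ref{deriv estim} by a Gr\"onwall argument; the only genuine additional input is a bound on the relative entropy at time zero. Write $h_n(t):=H(\mu^n_t|\nu^n_t)$. By Proposition~\ref{deriv estim} there are a constant $C>0$ and a sequence $\eps_n\downarrow 0$, both independent of $t$, such that $h_n'(t)\le Ch_n(t)+\eps_n kn$ for a.e.\ $t\in[0,T]$; one works with the integrated form since $t\mapsto h_n(t)$ is absolutely continuous, as is standard in the relative entropy method. Gr\"onwall's inequality then yields, for every $t\in[0,T]$,
$$h_n(t)\;\le\; e^{Ct}\Big(h_n(0)+\eps_n kn\int_0^t e^{-Cs}\,ds\Big)\;\le\; e^{CT}\big(h_n(0)+T\eps_n kn\big),$$
so it remains only to show that $h_n(0)=H(\mu^n|\nu^n_0)=o(kn)$.

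To estimate $h_n(0)$ I would use that both $\mu^n$ and $\nu^n_0$ are Bernoulli product measures: by \eqref{mun} the marginal of $\mu^n$ at site $x$ has parameter $\gamma(x/kn)$, while by \eqref{nu} together with $\rho_0=\gamma$ the marginal of $\nu^n_0$ at site $x$ has parameter $\gamma(i/k)$ whenever $in\le x\le(i+1)n-1$. Since relative entropy is additive over independent coordinates,
$$h_n(0)\;=\;\sum_{i=0}^{k-1}\sum_{x=in}^{(i+1)n-1}\mf h\big(\gamma(x/kn),\gamma(i/k)\big),\qquad \mf h(p,q):=p\log\tfrac pq+(1-p)\log\tfrac{1-p}{1-q}.$$
Using $\mf h(q,q)=0$, $\partial_p\mf h(q,q)=0$ and a second order Taylor expansion, together with the standing assumption $\eps_0<\gamma<1-\eps_0$, one gets $\mf h(p,q)\le C_{\eps_0}(p-q)^2$ for all $p,q$ in the range of $\gamma$; and since $\gamma$ is Lipschitz with constant $\kappa$ and $|x/kn-i/k|<1/k$ for $in\le x\le(i+1)n-1$, each summand is at most $C_{\eps_0}\kappa^2/k^2$. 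Hence $h_n(0)\le C_{\eps_0}\kappa^2\, n/k$. Because $k=k(n)\uparrow\infty$, we have $n/k=o(nk)$, so $h_n(0)=o(kn)$. Substituting into the Gr\"onwall bound gives $h_n(t)\le e^{CT}\big(o(kn)+T\eps_n kn\big)=o(kn)$, uniformly in $t\in[0,T]$, which is the claim.

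There is essentially no hard step here: all the work is already contained in Proposition~\ref{deriv estim}. The only points requiring a little care are (i) that the $o(kn)$ term in Proposition~\ref{deriv estim} is uniform in $t\in[0,T]$ — which it is, since the errors there stem from the smoothness of $\rho$ on $[0,T]\times\bb T$ and from Lemmas~\ref{p1}--\ref{p2}, whose estimates are time-uniform — and (ii) the observation that the initial bound $n/k$ is $o(nk)$ precisely because $k\to\infty$; this is one more place where the hypothesis $k=k(n)\uparrow\infty$ of Theorem~\ref{infinite}, as opposed to $k$ fixed, is essential.
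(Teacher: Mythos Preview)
Your argument is correct and follows the same route as the paper: Gr\"onwall on Proposition~\ref{deriv estim} plus a bound on $H(\mu^n|\nu_0^n)$ exploiting that both measures are Bernoulli products with site parameters differing by $O(1/k)$. The only cosmetic difference is that the paper bounds $H(\mu^n|\nu_0^n)$ by $\sup_\eta\log\frac{\mu^n(\eta)}{\nu_0^n(\eta)}\le Cn$ via a first-order estimate on each $\log$-ratio, whereas your second-order Taylor bound on $\mf h(p,q)$ yields the slightly sharper $O(n/k)$; either way one gets $o(kn)$ once $k\to\infty$.
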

\begin{proof}
	As long as we can show that $$H(\mu^n|\nu_0^n)\;=\;o(kn)\,,$$ this corollary follows immediately from Proposition \ref{deriv estim} and Gronwall's inequality.
	In fact we are going to prove a sharper bound: that there exists a constant $C>0$ such that
	\begin{equation}\label{time0}
		H(\mu^n|\nu_0^n)\;\leq \; Cn\,.
	\end{equation}
	Note that this bound is indeed sharper, since we are in the regime where $k$ tends to infinity with $n$.
	By the relative entropy's formula,
	\begin{equation*}
		\begin{split}
			H(\mu^n|\nu_0^n)\;=\;&\sum_{\eta\in\Omega_{nk}} \mu^n(\eta) \log \frac{\mu^n(\eta)}{\nu_0^n(\eta)}\;\leq\; \sup_{\eta\in\Omega_{nk}} \log \frac{\mu^n(\eta)}{\nu_0^n(\eta)} \\
			=\, &\sup_{\eta\in\Omega_{nk}} \sum_{i\in\bb T_k}\sum_{x=in}^{(i+1)n-1}\Big\{\eta(x) \log\frac{\gamma(x/nk)}{\gamma(i/k)}\,+\, (1-\eta(x)) \log\frac{1-\gamma(x/nk)}{1-\gamma(i/k)}\Big\}\,.\\
		\end{split}
	\end{equation*}
	Since $\gamma$ is smooth and $\varepsilon_0 <\gamma(\cdot)<1-\varepsilon_0$, there exists a constant $C=C(\varepsilon_0,\kappa)$, independent of $n$ and $k$, such that
	$$ \log\frac{\gamma(x/nk)}{\gamma(i/k)}\;\leq\; \log\Big(1+ \frac{\kappa/k}{\gamma(i/k)} \Big)\;\leq\; C(\varepsilon_0,\kappa) k^{-1}\,.$$
	Similarly, the following inequality also holds:
	$$\log\frac{1-\gamma(x/nk)}{1-\gamma(i/k)} \;\leq\; C(\varepsilon_0,\kappa) k^{-1}\,.$$
	These two estimates imply the desired estimate \eqref{time0}.
\end{proof}

\subsection{Some auxiliary results}\label{s42}
In this subsection we present some results that will be used in the next subsection.  First we state an integral by parts formula, which is a simple consequence of Lemma E.3 in \cite{jm}.
\begin{lemma}\label{single}
	Fix a measure $v$ on $\Omega_{nk}$. Fix $x\in\bb T_{kn}$ and  let $h:\Omega_{kn}\to\bb R$ be such that $h(\eta)=h(\eta^{x,x+1})$ for every $\eta\in\Omega_{nk}$. Then, for any $A>0$ and any density $f$ with respect to $\nu$,
	\begin{equation*}
		\begin{split}
			\int h(w(x+1)-w(x)) f d\nu\;\leq\; &A\, \mf D_{x,x+1}(\sqrt{f},\nu)\,+\,\frac{4}{A\varepsilon_0}\int h^2 fd\nu\\
			\,& - \Big[\rho(m_n(x+1))-\rho(m_n(x))\Big]\int hw(x)w(x+1)fd\nu\,,
		\end{split}
	\end{equation*}
	where we recall that
	$\mf D_{x,x+1}(\sqrt{f};\nu):= \int \big[\sqrt{f(\eta^{x,x+1})}-\sqrt{f(\eta)}\big]^2 d\nu.$
\end{lemma}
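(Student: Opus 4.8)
The plan is to peel off the quadratic term algebraically and then bound the remaining ``current'' term by the Dirichlet form through a change of variables and Young's inequality; this is precisely the argument behind Lemma~E.3 of \cite{jm}, specialized to the bond $(x,x+1)$ and to the product reference measure $\nu$ with marginals $\nu(\eta(y)=1)=\rho(m_n(y))$. Write $a:=\rho(m_n(x))$ and $b:=\rho(m_n(x+1))$, and recall $w(y)=\frac{\eta(y)-\rho(m_n(y))}{\rho(m_n(y))(1-\rho(m_n(y)))}$. The first step is the algebraic identity (equation~(A.3) of \cite{jm} at the bond $(x,x+1)$)
$$w(x+1)-w(x)\;=\;\frac{\eta(x+1)[1-\eta(x)]}{b(1-a)}-\frac{\eta(x)[1-\eta(x+1)]}{a(1-b)}-(b-a)\,w(x)w(x+1)\,,$$
which is checked by evaluating both sides on the four possible values of $(\eta(x),\eta(x+1))$. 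Multiplying by $hf$, integrating against $\nu$, and using $b-a=\rho(m_n(x+1))-\rho(m_n(x))$, the last term reproduces exactly the last term in the statement. It remains to bound
$$I\;:=\;\int h\Big(\frac{\eta(x+1)[1-\eta(x)]}{b(1-a)}-\frac{\eta(x)[1-\eta(x+1)]}{a(1-b)}\Big) f\,d\nu\;\le\;A\,\mf D_{x,x+1}(\sqrt f;\nu)+\frac{4}{A\varepsilon_0}\int h^2 f\,d\nu\,.$$

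To estimate $I$, perform the change of variables $\eta\mapsto\eta^{x,x+1}$ in its first summand. Since $h$ is invariant under the exchange and $\nu$ is a product measure, the Radon--Nikodym factor $\nu(\eta^{x,x+1})/\nu(\eta)$ equals $\frac{(1-a)b}{a(1-b)}$ on the event $\{\eta(x)=1,\eta(x+1)=0\}$, and this converts the first summand into $\int h\,\frac{\eta(x)[1-\eta(x+1)]}{a(1-b)}f(\eta^{x,x+1})\,d\nu$, so that $I=\int h\,\frac{\eta(x)[1-\eta(x+1)]}{a(1-b)}\big(f(\eta^{x,x+1})-f(\eta)\big)\,d\nu$. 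Now factor $f(\eta^{x,x+1})-f(\eta)=\big(\sqrt{f(\eta^{x,x+1})}-\sqrt{f(\eta)}\big)\big(\sqrt{f(\eta^{x,x+1})}+\sqrt{f(\eta)}\big)$ and apply Young's inequality with parameter $A$: the squared-difference factor produces $A\,\mf D_{x,x+1}(\sqrt f;\nu)$, while the remaining factor is controlled by $\big(\sqrt{f(\eta^{x,x+1})}+\sqrt{f(\eta)}\big)^2\le 2\big(f(\eta^{x,x+1})+f(\eta)\big)$, one further change of variables to rewrite the $f(\eta^{x,x+1})$ contribution as an integral of $h^2 f$, and the bounds $\varepsilon_0<\rho(m_n(\cdot))<1-\varepsilon_0$ on all the denominators. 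Collecting the numerical constants yields the term $\frac{4}{A\varepsilon_0}\int h^2 f\,d\nu$, which finishes the proof.

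There is no genuine obstacle in this argument; the only point requiring care is the bookkeeping in the two changes of variables, because when $a\neq b$ the measure $\nu$ is \emph{not} reversible for the exchange at $(x,x+1)$, so the Radon--Nikodym weights $\nu(\eta^{x,x+1})/\nu(\eta)$ must be tracked, and it is exactly these weights (restricted to the relevant support) that make the two pieces of $I$ fit together. If one only needs the case $m_n(x)=m_n(x+1)$---which is what occurs when the lemma is applied to the within-box sums \eqref{longsum_1}--\eqref{longsum_2} in Proposition~\ref{deriv estim}---then $a=b$, the quadratic term vanishes and $\nu$ is reversible, so the estimate is even more direct.
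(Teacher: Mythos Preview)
Your proposal is correct and follows precisely the approach the paper indicates: the paper does not give a proof but states that the lemma ``is a simple consequence of Lemma~E.3 in \cite{jm}'', and your argument---the algebraic identity (A.3) of \cite{jm} to peel off the quadratic term, followed by the change of variables $\eta\mapsto\eta^{x,x+1}$ (tracking the Radon--Nikodym weight since $\nu$ need not be exchange-invariant at a slow bond) and Young's inequality---is exactly that lemma specialized to the present setting. The only minor caveat is the bookkeeping of the constant in front of $\int h^2 f\,d\nu$: depending on how one bounds the factors $\frac{1}{a(1-b)}$ and the Radon--Nikodym weight, one may obtain a power of $\varepsilon_0$ different from $\varepsilon_0^{-1}$, but since $\varepsilon_0$ is fixed this is immaterial for all the applications in the paper.
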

	Note that the condition $h(\eta)=h(\eta^{x,x+1})$ for every $\eta\in\Omega_{nk}$ is crucial here. It means that $h$ depend on the sites $\{x,x+1\}$ only as a function of the sum $\eta(x)+\eta(x+1)$.
In particular,  if $\{x,x+1\}$ is not a slow bond, by definition of $m_n(\cdot)$, and choosing $A$ to be of the form $A=\delta n^2$  the third term on the right hand side vanishes:
\begin{equation}\label{nonslow}
	\int h(w(x+1)-w(x)) f d\nu\;\leq\;\delta n^2 \mf D_{x,x+1}(\sqrt{f},\nu)+\frac{4}{\delta\varepsilon_0 n^2}\int h^2 fd\nu\,.
\end{equation}
On the other hand, if $\{x,x+1\}$ is a slow bond, assuming without loss of generality that $x=ni-1$ for some $0\leq i\leq k-1$, we have that by choosing $A$ of the form $A=\delta n^{2-\beta}$
\begin{equation}\label{slow}
	\begin{split}
		\int h(w(ni)-w(ni-1)) f d\nu\;\leq\; &\delta n^2  n^{-\beta}\mf D_{x,x+1}(\sqrt{f},\nu)\,+\,\frac{4n^{\beta}}{\delta\varepsilon_0 n^2}\int h^2 fd\nu\\
		&- \big[\rho\big(\frac{i}{k}\big)-\rho\big(\frac{i-1}{k}\big)\big]\int hw(in-1)w(in)fd\nu\,.
	\end{split}
\end{equation}

Writing $w((i+1)n-1)-w(in-1)$ as a telescopic sum, by \eqref{nonslow} and \eqref{slow}, and recalling that $\beta>1$, we obtain the following corollary.
\begin{corollary}\label{box}
	Fix $i\in\bb T_k$. Let $h:\Omega_{kn}\to\bb R$ be such that $h(\eta)=h(\eta^{x,x+1})$  for every $in-1 \leq x\leq (1+i)n-2$ and every $\eta\in\Omega_{nk}$. Then, for any $\delta>0$,
	\begin{equation*}
		\begin{split}
			\int h(w((i+1)n-1)-w(in-1)) f d\nu\;\leq\; &\delta n^2 \sum_{x=in-1}^{(i+1)n-1}\xi^{nk}_{x,x+1}\mf D_{x,x+1}(\sqrt{f};\nu)\,+\,\frac{8n^\beta}{\delta\varepsilon_0 n^2}\int h^2 fd\nu\\
			\, - \,&\big[\rho\big(\frac{i}{k}\big)-\rho\big(\frac{i-1}{k}\big)\big]\int hw(in-1)w(in)fd\nu.
		\end{split}
	\end{equation*}
\end{corollary}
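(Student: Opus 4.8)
The plan is to reduce the estimate to a single telescoping identity together with the two one–bond bounds \eqref{nonslow} and \eqref{slow} already extracted from Lemma~\ref{single}. First I would write
$$ w\big((i+1)n-1\big)-w(in-1)\;=\;\sum_{x=in-1}^{(i+1)n-2}\big(w(x+1)-w(x)\big)\,, $$
so that $\int h\,\big(w((i+1)n-1)-w(in-1)\big)f\,d\nu$ splits into $n$ integrals, one per bond $(x,x+1)$ with $in-1\le x\le(i+1)n-2$. The standing hypothesis $h(\eta)=h(\eta^{x,x+1})$ is assumed for exactly these $x$, so every summand has the form handled by Lemma~\ref{single}. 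Of these $n$ bonds exactly one is slow, namely the leftmost, $x=in-1$ (the slow bonds are precisely those with $x=jn-1$, $j\in\{1,\dots,k\}$, and $in-1$ is one of them), while the remaining $n-1$ bonds, with $in\le x\le(i+1)n-2$, lie inside box $i$ and hence are fast.

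Next I would apply \eqref{nonslow} with a common parameter $\delta$ to each of the $n-1$ fast bonds and \eqref{slow} with the same $\delta$ to the slow bond $x=in-1$, and add the resulting $n$ inequalities. The Dirichlet-form terms combine into $\delta n^2\big(\xi^{nk}_{in-1,in}\mf D_{in-1,in}(\sqrt f;\nu)+\sum_{x=in}^{(i+1)n-2}\xi^{nk}_{x,x+1}\mf D_{x,x+1}(\sqrt f;\nu)\big)$, which is at most $\delta n^2\sum_{x=in-1}^{(i+1)n-1}\xi^{nk}_{x,x+1}\mf D_{x,x+1}(\sqrt f;\nu)$ after throwing in the nonnegative slow-bond term at $x=(i+1)n-1$. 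The bilinear correction $-[\rho(i/k)-\rho((i-1)/k)]\int h\,w(in-1)w(in)f\,d\nu$ is generated only by the single use of \eqref{slow}: the fast bonds contribute no such term, since $\rho(m_n(x))=\rho(m_n(x+1))$ along them, so this piece already matches the last term in the statement.

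It then remains to collect the $\int h^2 f\,d\nu$ contributions: each fast bond gives $\tfrac{4}{\delta\varepsilon_0 n^2}\int h^2 f\,d\nu$ and the slow bond gives $\tfrac{4n^\beta}{\delta\varepsilon_0 n^2}\int h^2 f\,d\nu$, for a total of $\big(4(n-1)+4n^\beta\big)(\delta\varepsilon_0 n^2)^{-1}\int h^2 f\,d\nu$. The only point requiring any thought is here: since $\beta>1$ one has $n-1\le n\le n^\beta$, hence $4(n-1)+4n^\beta\le 8n^\beta$ and the sum is bounded by $\tfrac{8n^\beta}{\delta\varepsilon_0 n^2}\int h^2 f\,d\nu$, exactly as claimed. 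I expect this absorption — using $\beta>1$ (so that $n^{-1}\le n^{\beta-2}$) to dominate the accumulated $O(n^{-1})$ error of the $n-1$ fast bonds by the single $O(n^{\beta-2})$ slow-bond error — to be the only step beyond pure bookkeeping; the rest follows from the telescoping and a direct summation.
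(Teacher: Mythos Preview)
Your proposal is correct and follows exactly the approach the paper indicates: telescope $w((i+1)n-1)-w(in-1)$, apply \eqref{slow} at the single slow bond $x=in-1$ and \eqref{nonslow} at the remaining fast bonds, then use $\beta>1$ to absorb the accumulated fast-bond error $4(n-1)$ into $4n^\beta$, yielding the constant $8$. The paper's proof is literally the one-line remark preceding the corollary (``Writing $w((i+1)n-1)-w(in-1)$ as a telescopic sum, by \eqref{nonslow} and \eqref{slow}, and recalling that $\beta>1$\ldots''), and your write-up is a faithful expansion of it, including the harmless enlargement of the Dirichlet sum to include the extra slow bond at $x=(i+1)n-1$.
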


\subsection{Estimate of the terms \texorpdfstring{\eqref{longsum_1}}{(4.6)}, \texorpdfstring{\eqref{longsum_2}}{(4.7)} and
\texorpdfstring{\eqref{longsum_3}}{(4.8)}} \label{s43}
The first lemma estimates the sum of \eqref{longsum_1} and \eqref{longsum_2}.
\begin{lemma}\label{p1}
	There exists a constant $C>0$ independent of $n$ and $k$ such that
	\begin{align*}
			&\alpha k^2\int \bigg\{ \sum_{i=0}^{k-1}\sum_{x=in}^{(i+1)n-1}\big\{ w_t(ni)\,-\,w_t(x)\big\}\Big[\rho_t\big(\frac{i-1}{k}\big)\,-\,\rho_t\big(\frac{i}{k}\big)\Big]\\
			&\qquad\qquad+\sum_{i=0}^{k-1}\sum_{x=in}^{(i+1)n-1}\{w_t(n(i+1)-1)\,-\,w_t(x)\}\Big[\rho_t\big(\frac{i+1}{k}\big)\,-\,\rho_t\big(\frac{i}{k}\big)\Big]\bigg\} f_t^n d\nu_t^n\\
			&\leq\; \frac{k^2n^{1+\beta}}{2}\mf D(\sqrt{f_t^n};\nu_t^n)\,+\,Ckn^{2-\beta}.
	\end{align*}
\end{lemma}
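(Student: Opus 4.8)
The plan is to reorganize each of the two double sums in \eqref{longsum_1} and \eqref{longsum_2} by a discrete summation by parts, so that every increment of $w_t$ that appears sits on a \emph{fast} bond inside a single box, and then to apply the integration-by-parts estimate \eqref{nonslow} (the fast-bond form of Lemma~\ref{single}) one bond at a time. The structural point that makes this work is that in \eqref{longsum_1} a site $x$ of the $i$-th box is compared with the left endpoint $ni$ of the \emph{same} box, and in \eqref{longsum_2} with the right endpoint $(i+1)n-1$ of the same box; consequently the telescoped increments never cross the slow bonds $\big((i+1)n-1,(i+1)n\big)$, so \eqref{nonslow} (rather than the weaker \eqref{slow}) applies and no entropy needs to be spent on a cross term of the form $w_t(y)w_t(y+1)$.

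Concretely, for $x\in\{in,\dots,(i+1)n-1\}$ one writes $w_t(ni)-w_t(x)=-\sum_{y=in}^{x-1}\big(w_t(y+1)-w_t(y)\big)$ and exchanges the order of summation to obtain
\[
\sum_{x=in}^{(i+1)n-1}\big(w_t(ni)-w_t(x)\big)\;=\;-\sum_{y=in}^{(i+1)n-2}\big((i+1)n-1-y\big)\big(w_t(y+1)-w_t(y)\big),
\]
where the weights obey $1\le(i+1)n-1-y\le n$ and every bond $(y,y+1)$ is fast; the sum in \eqref{longsum_2} is rewritten the same way, with $w_t(ni)$ replaced by $w_t((i+1)n-1)$ and weights $1\le y-in+1\le n$. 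Substituting these identities turns the sum of \eqref{longsum_1} and \eqref{longsum_2} into a sum over boxes $i$ and interior bonds $(y,y+1)$ of terms $\int h_{i,y}\big(w_t(y+1)-w_t(y)\big)f_t^n\,d\nu_t^n$, where $h_{i,y}$ is the \emph{deterministic} number $\pm\,\alpha k^2\big[\rho_t((i\mp1)/k)-\rho_t(i/k)\big]\times(\text{weight})$. By the Lipschitz bound on $\rho$, $\big|\rho_t((i\pm1)/k)-\rho_t(i/k)\big|\le\kappa/k$, so $|h_{i,y}|\le\alpha\kappa k\,(\text{weight})\le\alpha\kappa kn$.

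Then one applies \eqref{nonslow} to each such bond with $A=\delta n^2$ and $\delta:=\tfrac{1}{4}k^2n^{\beta-1}$; as $h_{i,y}$ is constant, $\int h_{i,y}^2 f_t^n\,d\nu_t^n=h_{i,y}^2$. Since distinct fast bonds have conductance $1$, summing the Dirichlet contributions over all interior bonds of all boxes, and over both \eqref{longsum_1} and \eqref{longsum_2}, gives at most $2\delta n^2\,\mf D(\sqrt{f_t^n};\nu_t^n)=\tfrac{1}{2}k^2n^{1+\beta}\,\mf D(\sqrt{f_t^n};\nu_t^n)$. For the remainder, $\sum_{y}\tfrac{4}{\delta\varepsilon_0 n^2}h_{i,y}^2\le\tfrac{4\alpha^2\kappa^2 k^2}{\delta\varepsilon_0 n^2}\sum_{j=1}^{n-1}j^2=O\big(\tfrac{k^2n}{\delta\varepsilon_0}\big)$ per box; multiplying by $k$ for the number of boxes and inserting $\delta=\tfrac{1}{4}k^2n^{\beta-1}$ gives $O(kn^{2-\beta})$. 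Combining the two pieces produces exactly the asserted inequality.

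The one step that requires any thought is the reorganization above — recognizing that comparing an interior site to an endpoint of its own box lets the telescoping avoid all slow bonds. Everything after that is bookkeeping, and the scaling $\delta\asymp k^2n^{\beta-1}$ is essentially forced: it is the unique choice for which $\delta n^2\asymp k^2n^{1+\beta}$ (so the Dirichlet term is absorbed with the required constant $\tfrac{1}{2}$) while $\delta^{-1}$ times the accumulated squared weight $\asymp k^3 n$ has the target order $kn^{2-\beta}$. (The hypothesis $\beta>1$ is what makes $kn^{2-\beta}=o(kn)$, as is needed later, though the displayed inequality itself holds for any $\beta$.)
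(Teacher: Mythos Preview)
Your proof is correct and follows essentially the same route as the paper's: rewrite each double sum by summation by parts so that only interior (fast) bond increments $w_t(y+1)-w_t(y)$ appear with weights bounded by $n$, apply the fast-bond estimate \eqref{nonslow} term by term with constant $h$, use the Lipschitz bound $|\rho_t((i\pm1)/k)-\rho_t(i/k)|\le\kappa/k$, and pick $\delta\asymp k^2n^{\beta-1}$ to balance. The only cosmetic difference is that the paper first pulls out the factor $Ck=\alpha\kappa k$ from the $\rho$-increment and then applies \eqref{nonslow} with $h=(\text{weight})$ and a rescaled $\delta=kn^{\beta-1}/(4C)$, whereas you keep the full deterministic coefficient inside $h_{i,y}$ and take $\delta=\tfrac14 k^2n^{\beta-1}$; the resulting bounds are identical.
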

\begin{proof}
	The first sum on the left hand side of the inequality in the lemma can be written as
	$$\alpha k^2\int \Big\{ \sum_{i=0}^{k-1}\sum_{x=in}^{(i+1)n-2}\big((i+1)n-1-x\big)\big\{ w_t(x)-w_t(x+1)\big\}\Big[\rho_t\big(\frac{i-1}{k}\big)-\rho_t\big(\frac{i}{k}\big)\Big]\Big\}f_t^nd\nu_t^n.$$
	By Lemma \ref{single} and the Lipschitz continuity of $\rho_t$ for any $\delta>0$, this is bounded by
	\begin{equation*}
		\begin{split}
			Ck\sum_{i=0}^{k-1}\sum_{x=in}^{(i+1)n-2} \Big\{ \delta n^2 \mf D_{x,x+1}(\sqrt{f_t^n};\nu_t^n)+\frac{4}{\delta\varepsilon_0 n^2} n^2 \Big\}
			\;\leq\;&Ck \delta n^2 \mf D(\sqrt{f_t^n})+\frac{4Ck^2n}{\delta\varepsilon_0}\\
			\;=\;&\frac{k^2n^{1+\beta}}{4}\mf D(\sqrt{f_t^n};\nu_t^n)+\frac{Ckn^{2-\beta}}{\varepsilon_0}
		\end{split}
	\end{equation*}
	by choosing $\delta =\frac{kn^{\beta-1}}{4C}$.
	The second sum on the left hand side in the lemma can be dealt with in the same way, concluding the proof.
\end{proof}

The third term to be dealt with, namely \eqref{longsum_3}, is estimated in the next lemma.
\begin{lemma}\label{p2}
	\begin{equation*}
		\begin{split}
			&-\alpha n\int \sum_{i=0}^{k-1}k^2\Big[\rho_t(\frac{i-1}{k})\,-\,\rho_t(\frac{i}{k})\Big]^2w_t(ni-1)w_t(ni) f_t^n d\nu_t^n\\
			&\leq \, \frac{k^2n^{1+\beta}}{2}\mf D(\sqrt{f_t^n};\nu_t^n)\,+\,  C(\varepsilon_0,\kappa)H(\mu_t^n|\nu_t^n)\,+\,o(kn)
		\end{split}
	\end{equation*}
\end{lemma}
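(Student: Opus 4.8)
Throughout write $\theta_j:=\rho_t(j/k)$ and, for a site $x$ in box $j$, $\theta(x)=\rho_t(m_n(x))$ and $\bar\eta(x):=\eta(x)-\theta(x)$, so that
$w_t(ni-1)w_t(ni)=\big[\theta_{i-1}(1-\theta_{i-1})\theta_i(1-\theta_i)\big]^{-1}\bar\eta(ni-1)\bar\eta(ni)$.
Since $\gamma$ is smooth, the maximum principle gives $\varepsilon_0<\rho_t<1-\varepsilon_0$, and the standing Lipschitz bound gives $|\theta_{i-1}-\theta_i|\le\kappa/k$; hence the prefactors
$c_i:=\dfrac{\alpha k^2 n(\theta_{i-1}-\theta_i)^2}{\theta_{i-1}(1-\theta_{i-1})\theta_i(1-\theta_i)}$
satisfy $0\le c_i\le \alpha\kappa^2 n\,[\varepsilon_0(1-\varepsilon_0)]^{-2}$ and $\sum_i c_i=O(kn)$. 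Thus the left‑hand side of Lemma~\ref{p2} equals $-\sum_{i}c_i\!\int \bar\eta(ni-1)\bar\eta(ni)\,f_t^n\,d\nu_t^n$, and it suffices to bound $\sum_i c_i\big|\!\int \bar\eta(ni-1)\bar\eta(ni)\,f_t^n d\nu_t^n\big|$ by the stated quantity.

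\emph{Step 1: replace the boundary sites by box averages.} Put $\bar\eta_j:=\tfrac1n\sum_{x=jn}^{(j+1)n-1}\bar\eta(x)$. Writing $\bar\eta(ni-1)-\bar\eta_{i-1}$ and $\bar\eta(ni)-\bar\eta_{i}$ as telescopic sums of differences across the \emph{non‑slow} bonds inside the two boxes (exactly as in the proof of Lemma~\ref{repinf}) and applying the integration‑by‑parts estimate of Lemma~\ref{single} bond by bond (the remaining factor $\bar\eta$ is bounded by $1$ and symmetric under those swaps, and $\nu_t^n$ is invariant), one gets, for any $A>0$,
$\big|\!\int\!\big(\bar\eta(ni-1)\bar\eta(ni)-\bar\eta_{i-1}\bar\eta_i\big)f_t^n d\nu_t^n\big|\le A\big(\mf D^{(i-1)}_{\mathrm{box}}+\mf D^{(i)}_{\mathrm{box}}\big)+Cn/A$, where $\mf D^{(j)}_{\mathrm{box}}:=\sum_{x=jn}^{(j+1)n-1}\mf D_{x,x+1}(\sqrt{f_t^n};\nu_t^n)$. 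Summing against $c_i$ and choosing $A\asymp k^2n^{\beta}$, the Dirichlet part is $\le\tfrac14k^2n^{1+\beta}\mf D(\sqrt{f_t^n};\nu_t^n)$ and the remainder is $O(kn^2/A)=O(n^{2-\beta}/k)=o(kn)$; here $\beta>1$ is exactly what makes the intra‑box replacement cost ($\asymp n^2$) negligible against the available budget ($\asymp k^2n^{1+\beta}$).

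\emph{Step 2: split into a mean term and a covariance term.} Write $\int\bar\eta_{i-1}\bar\eta_i f_t^n d\nu_t^n=\mathrm{Cov}_{\mu_t^n}(\bar\eta_{i-1},\bar\eta_i)+\big(\textstyle\int\bar\eta_{i-1}f_t^nd\nu_t^n\big)\big(\int\bar\eta_i f_t^nd\nu_t^n\big)$. For the mean term, let $p_x:=\int\eta(x)f_t^nd\nu_t^n$; by Cauchy–Schwarz $\big(\int\bar\eta_j f_t^nd\nu_t^n\big)^2\le \tfrac1n\sum_{x=jn}^{(j+1)n-1}(p_x-\theta_x)^2$, while Pinsker's inequality for Bernoulli marginals together with the subadditivity of entropy (valid because $\nu_t^n$ is a product measure) gives $\sum_x(p_x-\theta_x)^2\le\tfrac12 H(\mu_t^n|\nu_t^n)$. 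Using $|ab|\le\tfrac12(a^2+b^2)$ and $c_i\le \alpha\kappa^2 n[\varepsilon_0(1-\varepsilon_0)]^{-2}$, the contribution of the mean terms is $\le C(\varepsilon_0,\kappa)\,H(\mu_t^n|\nu_t^n)$, which is the entropy term in the statement. For the covariance term, $|\mathrm{Cov}_{\mu_t^n}(\bar\eta_{i-1},\bar\eta_i)|\le\tfrac12\big(\mathrm{Var}_{\mu_t^n}(\bar\eta_{i-1})+\mathrm{Var}_{\mu_t^n}(\bar\eta_i)\big)$, so everything reduces to a bound on $\sum_j c$-weighted $\mathrm{Var}_{\mu_t^n}(\bar\eta_j)$.

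\emph{Step 3: the variance of a box average.} This is the main obstacle. One needs a second‑moment (``two blocks inside a box'') estimate of the type $\mathrm{Var}_{\mu_t^n}(\bar\eta_j)\le \tfrac{C}{n}+B\,\mf D^{(j)}_{\mathrm{box}}(\sqrt{f_t^n};\nu_t^n)$, obtained by expanding $\bar\eta_j^2=\tfrac1{n^2}\sum_{x,y}\bar\eta(x)\bar\eta(y)$, treating the $n$ diagonal terms (each $O(1)$, giving the $C/n$) separately and replacing, for $x\neq y$, one factor by the other along a path of non‑slow bonds via Lemma~\ref{single}; the $n^{-2}$ from the double average furnishes the crucial gain. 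Inserting this with $B\asymp k^2n^{\beta}$ (affordable since $\beta>1$): the $C/n$ part contributes $\sum_i c_i\cdot O(1/n)=O(k)=o(kn)$, and the Dirichlet part contributes $\le\tfrac14k^2n^{1+\beta}\mf D(\sqrt{f_t^n};\nu_t^n)$. Adding the bounds of Steps~1–3 gives
$$-\alpha n\!\int\!\sum_{i=0}^{k-1}k^2\big[\rho_t(\tfrac{i-1}{k})-\rho_t(\tfrac ik)\big]^2 w_t(ni-1)w_t(ni)\,f_t^n d\nu_t^n\ \le\ \tfrac12 k^2n^{1+\beta}\,\mf D(\sqrt{f_t^n};\nu_t^n)+C(\varepsilon_0,\kappa)H(\mu_t^n|\nu_t^n)+o(kn),$$
which is the assertion of Lemma~\ref{p2}. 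The delicate points are the exact accounting in Step~3 (getting the right power of $n$ in the replacement cost, and checking that no entropy term with an $n$‑growing coefficient is produced) and the systematic use of $\beta>1$ to keep all intra‑box replacements within the Dirichlet budget supplied by Lemma~\ref{deriv}.
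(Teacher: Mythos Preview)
Your Steps~1 and~2 are correct and the overall strategy (replace the two boundary sites by box averages, then control the resulting product $\bar\eta_{i-1}\bar\eta_i$) is a reasonable and in fact potentially cleaner route than the paper's. The paper instead smooths $w_t(\cdot)$ at a mesoscopic scale $\ell=n\sqrt{k}$, writes the term as $n\int V(G)f\,d\nu$ with $V(G)=\sum_xG(x)w(x)w(x+1)$, replaces $V$ by $V^\ell$ via a flow function $\phi_\ell$, and then uses the subgaussian/entropy machinery of Jara--Menezes (Lemmas~\ref{prod}, \ref{subgsum}, \ref{ldepsum}) to estimate $\int nV^\ell(G)f\,d\nu$ by $CH+O(kn/\ell)$. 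Your Step~1 achieves the analogous replacement more directly at scale~$n$, and your Step~2 isolates the ``mean'' contribution nicely via Pinsker.

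However, Step~3 contains a genuine gap. The inequality you claim,
\[
\mathrm{Var}_{\mu_t^n}(\bar\eta_j)\ \le\ \tfrac{C}{n}+B\,\mf D^{(j)}_{\mathrm{box}}(\sqrt{f_t^n};\nu_t^n),
\]
is \emph{false} for any choice of $B$. The reason is structural: the box average $\bar\eta_j$ is a conserved quantity for the intra-box swap dynamics, so any density $f$ that is invariant under all swaps inside box $j$ has $\mf D^{(j)}_{\mathrm{box}}(\sqrt{f};\nu_t^n)=0$, while $\mathrm{Var}_{\mu}(\bar\eta_j)$ can still be of order~$1$. A concrete counterexample: take $\mu$ to be the mixture $\tfrac12\nu_{\theta_j}+\tfrac12\nu_{\theta_j+\varepsilon}$ restricted to box $j$ (product Bernoulli with two different constant parameters). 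Both components are swap-invariant, so the Dirichlet form vanishes, yet $\mathrm{Var}_\mu(\bar\eta_j)=\varepsilon^2/4+O(1/n)$. Your heuristic ``replace one factor by the other'' via Lemma~\ref{single} does not give $C/n$: after the replacement $\bar\eta(x)\bar\eta(y)\to\bar\eta(x)^2$, summing $\tfrac1{n^2}\sum_{x\neq y}\bar\eta(x)^2$ yields an $O(1)$ term, not $O(1/n)$.

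The fix is to use the entropy, not the Dirichlet form, for this step --- indeed you can bypass your Steps~2--3 altogether. After Step~1 one has to bound $\big|\sum_i c_i\int\bar\eta_{i-1}\bar\eta_i f_t^n d\nu_t^n\big|$. Under $\nu_t^n$ the $\bar\eta_j$ are independent and each is subgaussian of order $O(1/n)$, so $\{c_i\bar\eta_{i-1}\bar\eta_i\}_i$ is $2$-dependent; applying Lemma~\ref{ldepsum} with $\gamma$ of order~$1$ together with Lemma~\ref{prod} (valid since $c_i=O(n)$) yields directly
\[
\Big|\sum_i c_i\!\int\!\bar\eta_{i-1}\bar\eta_i f_t^n d\nu_t^n\Big|\ \le\ C(\varepsilon_0,\kappa)\big(H(\mu_t^n|\nu_t^n)+k\big),
\]
and $k=o(kn)$. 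This is precisely the mechanism behind the paper's Lemma~\ref{Vlproof}, just carried out at scale~$n$ rather than~$\ell$.
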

\begin{proof}
	Define the function $G:\bb T_{kn}\to\bb R$ by
	\begin{equation*}
		G(x)\;=\;
		\begin{cases}
			-\alpha k^2 \Big[\rho_t(\frac{i-1}{k})\,-\,\rho_t(\frac{i}{k})\Big]^2\,,& \text{ if } x= in-1 \text{ for some } 0\leq i\leq k-1\\
			0\,,& \text{ otherwise. }\\
		\end{cases}
	\end{equation*}
Keep in mind that $G(x)$ is nonzero only when $x$ is the right vertex of a slow bond. Moreover, $\|G\|_\infty$ can be estimated from above by some constant that depends only on $\kappa$ and $\alpha$, and is bounded uniformly over $k$ and $n$ since $\rho_t(\cdot)$ is smooth. With this notation, the term that needs to be estimated in the lemma can be written as
	\begin{equation}\label{intGww}
		\int  n\sum_{x\in\bb T_{kn}}G(x)w_t(x)w_t(x+1) f_t^n d\nu_t^n\,.
	\end{equation}
	Given $\ell\in\bb N$, let $\Lambda_\ell=\{0,1,\ldots,\ell-1\}$ and let $p_{\ell}$ be the uniform measure on $\Lambda_{\ell}$, that is,
	$p_{\ell}(x)=\frac{1}{\ell}$ for $0\leq x\leq \ell-1$ and zero otherwise.
	Let $q_{\ell}$ be the measure on $\bb Z$ given by the convolution of $p_\ell$ with itself, that is,
	$$q_{\ell}(z)\;:=\;\sum_{y\in\bb Z}p_{\ell}(y)p_{\ell}(z-y)\,.$$
	Note that $q_\ell$ has support in $\Lambda_{2\ell-1}$ and that there exists a constant $C>0$ such that $q_\ell(z)\leq\, C\ell^{-1}$ for any $z$.
	Given $\ell<kn/2$, define $$w^{\ell}(x)\;:=\;\sum_{y\in\bb Z}w(x+y)q_{\ell}(y)\,.$$
	The idea to estimate the integral in \eqref{intGww} goes as follows. We will first replace $w_t(x+1)$ by $w_t^{\ell}(x+1)$ and then estimate the remaining integral. Afterwards it only remains to estimate the cost of performing the replacement.

	To simplify the computations, we assume furthermore that $\ell \gg n$ and that $\ell$ is divisible by $n$.
	As can be seen  in~\cite[Lemma 3.2 on p.13 and the comment on p.63, after the proof of Lemma G.2]{jm} that there exists a function $\phi_\ell$ such that for all $x\in \bb T_{kn}$,
	\begin{equation*}
w(x)-w^{\ell}(x)\;=\; \sum_{z\in\bb Z} \phi_{\ell}(z) \big(w(x+z+1)-w(x+z)\big)\,.
	\end{equation*}
	Moreover, the function
	 $\phi_{\ell}(z)$ has support in $\Lambda_{2\ell-1}$, $0\leq \phi_{\ell}(z)\,\leq\, 1$ for every $z\in\bb Z$ and hence $\sum_{z\in\bb Z}\phi_\ell(z)^2\leq 2\ell$.  From the previous identity, rearranging the order of summation,  we obtain
	\begin{equation}\label{Vdif}
		\sum_{x\in\bb T_{kn}}\big( w(x+1)-w^{\ell}(x+1)\big) w(x) G(x)\;=\; \sum_{x\in\bb T_{kn}}h_{x-1}^{\ell}\big(w(x+1)\,-\,w(x)\big)\,,
	\end{equation}
	where, for each $x\in\bb T_{kn}$,
	$$h_x^{\ell}\;:=\;\sum_{z\in\bb Z} \phi_{\ell}(z)w(x-z)G(x-z)\,.$$
	A key observation is that $h_{x-1}^\ell(\eta^{x,x+1})=h_{x-1}^\ell(\eta)$ for every $\eta$, which allows us to apply the integral by parts formula stated in Subsection \ref{s42}.
	Defining
	\begin{align*}
	V(G)&\;:=\;\sum_{x\in\bb T_{kn}}w(x) w(x+1) G(x)\,,\qquad \text{and}\\
	V^{\ell}(G)&\;:=\;\sum_{x\in\bb T_{kn}}w(x) w^{\ell}(x+1) G(x)\,,
	\end{align*}
	the integral in \eqref{intGww} can be rewritten as
	$$n\int V(G)f_t^n d\nu_t^n\,.$$
	As mentioned before, we shall estimate
	\begin{equation*}
	n\int \{V(G)-  V^{\ell}(G)\} f_t^nd\nu_t^n\,, \qquad \text{ and } \qquad n\int V^\ell(G)f_t^n d\nu_t^n\,.
	\end{equation*}
	From the inequalities \eqref{nonslow}, \eqref{slow} and the identity \eqref{Vdif}, we have that for any $\delta>0$,
	\begin{equation}\label{difV}
		\begin{split}
			n\int \{V(G)\,-\,  & V^{\ell}(G)\} f_t^nd\nu_t^n\;\leq\;\delta n^2 \mf D(\sqrt{f_t^n};\nu_t^n)\\
			+\,&\sum_{i=0}^{k-1}\bigg\{\frac{4}{\delta\varepsilon_0 }\sum_{x=in}^{(i+1)n-2}\int (h_{x-1}^{\ell})^2 f_t^nd\nu_t^n\,+\, \frac{4n^\beta}{\delta\varepsilon_0 }\int (h_{(i+1)n-2}^{\ell})^2 f_t^nd\nu_t^n\bigg\}\\
			+&\,\sum_{i=0}^{k-1}\bigg\{ n\Big[\rho_t\big(\frac{i-1}{k}\big)\,-\,\rho_t\big(\frac{i}{k}\big)\Big]\int h_{in-2}^{\ell}w(in-1)w(in)f_t^n d\nu_t^n\bigg\}\,.
		\end{split}
	\end{equation}

	To simplify notation,  define
	\begin{equation}\label{W_ell_G}
	W^{\ell}(G)\,:=\, \sum_{i=1}^k\Big\{n^\beta(h_{in-2}^{\ell})^2\,+\, \sum_{x=(i-1)n}^{in-2}(h_{x-1}^{\ell})^2\Big\}
	\end{equation}
	 and
\begin{equation}\label{Z_ell_G}
Z^{\ell}(G):=\,\sum_{i=0}^{k-1}\Big\{ n\Big[\rho_t(\frac{i-1}{k})\,-\,\rho_t(\frac{i}{k})\Big]\int h_{in-2}^{\ell}w(in-1)w(in)f_t^n d\nu_t^n\Big\}\,.
\end{equation}
	Choosing $\delta=k^2n^{\beta-1}/16$ in \eqref{difV}, we see that in order to prove the theorem, it remains to show that
	\begin{equation}\label{Vl}
		\int nV^{\ell}(G)  f_t^n d\nu_t^n\;\leq\; \frac{k^2n^{\beta+1}}{8}\mf D(\sqrt{f_t^n};\nu_t^n)\,+\,C(\varepsilon_0)\|G\|_\infty \Big\{H(\mu_t^n|\nu_t^n)\,+\, \frac{kn}{\ell} \Big\}\,,
	\end{equation}
	\begin{equation}\label{Wl}
		\frac{4}{k^2n^{\beta-1}\varepsilon_0}\int W^{\ell}(G) f_t^nd\nu_t^n\;\leq\; \frac{C(\varepsilon_0,\kappa)\ell^2}{kn\|G\|_\infty^2}\,,
	\end{equation}
	and
	\begin{equation}\label{Zl}
		\int Z^{\ell}(G)  f_t^n d\nu_t^n\;\leq\;C(\varepsilon_0)\ell\,,
	\end{equation}
	and then choose $\ell=n\sqrt{k}$.
	These estimates will be proven in Lemmas \ref{Vlproof}, \ref{Wlproof} and \ref{Zlproof} respectively.
\end{proof}

\begin{lemma}\label{Vlproof}
	Assume that $n\ll \ell \ll nk$. Then there exists a constant $C(\varepsilon_0)$ such that
	\begin{equation*}
		\int nV^{\ell}(G)  f_t^n d\nu_t^n\;\leq\; \frac{k^2n^{\beta+1}}{8}\mf D(\sqrt{f_t^n};\nu_t^n)\,+\,C(\varepsilon_0)\|G\|_\infty \Big\{H(\mu_t^n|\nu_t^n)\,+\, \frac{kn}{\ell} \Big\}\,.
	\end{equation*}
\end{lemma}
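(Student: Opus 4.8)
The plan is to first isolate the structure that makes $nV^{\ell}(G)$ small with respect to the reference measure $\nu_t^n$, and then to pay the (deliberately large) Dirichlet form budget $\tfrac18 k^2n^{1+\beta}\mf D(\sqrt{f_t^n};\nu_t^n)$ to transfer that smallness to the density $f_t^n$. Since $G$ is supported on the right endpoints $x_i:=in-1$ of the slow bonds,
$$nV^{\ell}(G)\;=\;n\sum_{i=0}^{k-1}G(x_i)\,w_t(in-1)\,w_t^{\ell}(in)\,.$$
The window of $w_t^{\ell}(in)$ is $\{in,\dots,in+2\ell-2\}$ and avoids $in-1$, so $w_t(in-1)$ and $w_t^{\ell}(in)$ depend on disjoint occupation variables; in particular each product has zero $\nu_t^n$-mean, and since for $i\ne i'$ the variable $\eta(i'n-1)$ occurs in no factor other than $w_t(i'n-1)$, all cross expectations vanish, so that $\mathrm{Var}_{\nu_t^n}(nV^{\ell}(G))=n^2\sum_iG(x_i)^2\,\mathbb{E}_{\nu_t^n}[w_t(in-1)^2]\,\mathbb{E}_{\nu_t^n}[w_t^{\ell}(in)^2]\le C(\varepsilon_0)\|G\|_\infty^2\,n^2k/\ell$, where one uses $\mathbb{E}_{\nu_t^n}[w_t^{\ell}(in)^2]=\sum_zq_{\ell}(z)^2\,\mathrm{Var}_{\nu_t^n}(w_t(in+z))\le C(\varepsilon_0)/\ell$ because $q_{\ell}(z)\le C/\ell$. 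Feeding this variance into the entropy inequality would only produce an error of order $\|G\|_\infty^2n^2k/\ell$, a factor $n$ above the target $\|G\|_\infty kn/\ell$, so the extra factor must come from the dynamics — which is what the large Dirichlet form term buys.

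To extract it I would proceed as in the estimate of $V-V^{\ell}$ in \eqref{difV}, keeping $w_t(in-1)$ as the frozen partner: write $w_t^{\ell}(in)$ as a weighted sum of nearest-neighbour increments of $w_t$ over the bonds of its window (by the same device that produced $\phi_{\ell}$), arranged so that the bond $\{in-1,in\}$ is never used — this is essential, since the nearest-neighbour product $w_t(in-1)w_t(in)$ across the slow bond has an $n$-fold multiple that cannot be bounded crudely, and that contribution is precisely the one treated through $Z^{\ell}$. Applying the integration-by-parts estimate of Lemma~\ref{single} bond by bond with $h=w_t(in-1)$ — legitimate because $in-1$ lies strictly to the left of every bond used, so $h(\eta)=h(\eta^{y,y+1})$ there — one bounds each increment term by $A_y\,\mf D_{y,y+1}(\sqrt{f_t^n};\nu_t^n)+\tfrac{4}{A_y\varepsilon_0}\int w_t(in-1)^2 f_t^n\,d\nu_t^n$, plus a $\rho_t$-gradient correction of size $O(\kappa/k)$ carrying the additional factor $n^{-\beta}$ at each slow bond of the window. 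Summing over the increments and over $i$, using $0\le\phi_{\ell}\le1$ and that every bond is reused by only $O(\ell/n)$ pairs, and choosing the $A_y$ so that the total Dirichlet form contribution is exactly $\tfrac18 k^2n^{1+\beta}\mf D$ (leaving room for the $\tfrac1{16}$ spent in \eqref{difV}), the remaining $\int w_t(in-1)^2 f_t^n\,d\nu_t^n$-terms are absorbed into $C(\varepsilon_0)\|G\|_\infty\{H(\mu_t^n|\nu_t^n)+kn/\ell\}$ through $\int g f_t^n\,d\nu_t^n\le \tfrac1A H(\mu_t^n|\nu_t^n)+\tfrac1A\log\int e^{Ag}d\nu_t^n$ with $A$ a small constant depending on $\varepsilon_0$, exploiting that the $w_t(in-1)$ sit at sites $n$ apart and are therefore $\nu_t^n$-independent.

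The step I expect to be the real obstacle is the bookkeeping of the quadratic pieces that survive the integration by parts: the leftover product $\sum_iG(x_i)w_t(in-1)w_t(\,\cdot\,)$ attached to the far edge of each window, and the slow-bond $\rho_t$-gradient corrections, which are triple products of $w_t$'s. A crude estimate of these (using only $|w_t|\le C(\varepsilon_0)$) leaves an error of order $\|G\|_\infty\ell$, far above the admissible $\|G\|_\infty kn/\ell$; instead one must observe that each of them again has almost-zero $\nu_t^n$-mean (only the pointwise factor $w_t(in-1)$ is unpaired), so the same device can be iterated, every pass producing another factor of the small $\rho_t$-gradient $O(\kappa/k)$, and it is in closing this iteration that $\beta>1$, the placement of the smoothing window entirely to one side of the slow bond $\{in-1,in\}$, and — only at the very end, in Lemma~\ref{p2} — the choice $\ell=n\sqrt k$ all enter. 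Once this is carried out the entropy term is harmless, since $H(\mu_t^n|\nu_t^n)=o(kn)$ by Corollary~\ref{entropy bound}.
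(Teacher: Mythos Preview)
Your sketch identifies the right difficulty but does not resolve it, and the iteration you propose in the last paragraph is not how the paper closes the argument. Concretely: once you freeze $w_t(in-1)$ and telescope $w_t^{\ell}(in)$ into nearest-neighbour increments, you necessarily produce either the slow-bond pair $w_t(in-1)w_t(in)$ (if you start the telescope at $in$) or a long-range pair $w_t(in-1)w_t(\text{far edge})$ (if you start at the other end), together with the $\rho_t$-gradient triple products at every one of the $\sim\ell/n$ slow bonds inside the window. As you note, the crude bound on these is of order $\|G\|_\infty\ell$, and your proposed fix---iterating the integration by parts, each pass gaining a factor $O(\kappa/k)$---does not obviously terminate: every pass creates new boundary products of the same type, and there is no mechanism by which the series sums to $kn/\ell$. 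This is a genuine gap.

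The paper avoids the whole issue by a different decomposition. Since $q_\ell=p_\ell*p_\ell$, one has the exact identity
\[
V^{\ell}(G)\;=\;\sum_{x\in\bb T_{kn}}\overleftarrow{w}^{\ell}(x)\,\overrightarrow{w}^{\ell}(x),
\qquad
\overleftarrow{w}^{\ell}(x)=\sum_{y}w(x-y)G(x-y)p_\ell(y),\quad
\overrightarrow{w}^{\ell}(x)=\sum_{y}w(x+y+1)p_\ell(y),
\]
so that \emph{both} factors are averages over $\sim\ell$ sites (or $\sim\ell/n$ slow-bond endpoints), not a pointwise $w_t(in-1)$ against an average. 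One then replaces each $w(in-1)$ appearing in $\overleftarrow{w}^{\ell}$ by the box average $\frac1n\sum_{y=1}^n w(in-y)$; the key point is that this replacement telescopes \emph{entirely inside} the $i$-th box, so only fast bonds are used and the $\rho_t$-gradient correction in Lemma~\ref{single} \emph{vanishes identically}. The Dirichlet form cost of this replacement is the $\frac18 k^2n^{1+\beta}\mf D$ term. What remains is $\int\sum_x n\,\underleftarrow{w}^{\ell}(x)\,\overrightarrow{w}^{\ell}(x)\,f_t^n\,d\nu_t^n$, and now $n\,\underleftarrow{w}^{\ell}(x)$ is a sum of $\sim\ell$ independent centred bounded variables (the $n$ cancels the $1/n$ from the box average), hence subgaussian of order $C(\varepsilon_0)\|G\|_\infty^2/\ell$, while $\overrightarrow{w}^{\ell}(x)$ is subgaussian of order $C(\varepsilon_0)/\ell$. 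The family $\{n\,\underleftarrow{w}^{\ell}(x)\overrightarrow{w}^{\ell}(x)\}_x$ is $3\ell$-dependent, so the entropy inequality for $\ell$-dependent families together with the exponential-moment bound for products of subgaussians gives directly $C(\varepsilon_0)\|G\|_\infty\{H(\mu_t^n|\nu_t^n)+kn/\ell\}$, with no iteration needed.
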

Before coming to the proof we need to bring into play the concept of a subgaussian random variable and the notion of  $\ell$-dependence. Results about these notions are collected in the appendix.
\begin{definition}
	We say that a real-valued random variable $X$ is subgaussian of order $\sigma^2$ if, for every $\theta\in \bb R$,
	$$\log E[e^{\theta X}]\;\leq\;\frac{1}{2}\sigma^2\theta^2.$$
\end{definition}
\begin{definition}\label{def:ldep}
	We say that a set $B\subseteq \T_n$ is $\ell$-sparse if $|i-j|\geq \ell$ for any $i\neq j\in B$. We say that a collection of random variables $\{X_i:i\in\bb T_n\}$ is $\ell$-dependent if for any $\ell$-sparse set $B\subseteq \T_n$ the collection $\{X_i:\, i\in B\}$ is independent.  In particular a collection of $1$-dependent random variables are independent.
\end{definition}
\begin{proof}[Proof of Lemma~\ref{Vlproof}]
	Recall the definition of $V^{\ell}(G)$ and keep in mind that $q_\ell$ can be seen as the convolution of the uniform measure $p_\ell$ with itself. We claim that
\begin{equation}
\label{eq:VlG}
V^{\ell}(G)=\sum_{x\in\bb T_{kn}} \overleftarrow{w}^{\ell}(x)\overrightarrow{w}^{\ell}(x)\,,
\end{equation}
	where
	\begin{align*}
	\overleftarrow{w}^{\ell}(x) &\;:=\; \sum_{y\in\bb Z}w(x-y)G(x-y)p_{\ell}(y)\,,\qquad \text{and}\\
	\overrightarrow{w}^{\ell}(x)&\;:=\; \sum_{y\in\bb Z}w(x+y+1)p_{\ell}(y)\,.
	\end{align*}
	Indeed, note that
	\begin{equation*}
	V^{\ell}(G)= \sum_{x,y,z\in\bb T_{kn}}w(x)G(x)w(x+y+1)p_\ell(z)p_\ell(y-z)\,,
	\end{equation*}
	and
	\begin{equation*}
	\sum_{x\in\bb T_{kn}} \overleftarrow{w}^{\ell}(x)\overrightarrow{w}^{\ell}(x)= \sum_{\tilde x, \tilde y, \tilde z\in\bb T_{kn}}w(\tilde x-\tilde y)G(\tilde x-\tilde y)w(\tilde x+ \tilde z+1)p_\ell(\tilde y)p_\ell(\tilde z)\,.
	\end{equation*}
	Then, to check the claim,  it only remains to use the following change of variables: $x=\tilde x-\tilde y$, $y=\tilde y+\tilde z$, and $z=\tilde y$.

	Recall that $G(z)\neq 0$ if and only if $z=in-1$ for some $0\leq i\leq k-1$. Therefore $\overleftarrow{w}^{\ell}(x)$ can be rewritten as
	$$\overleftarrow{w}^{\ell}(x):=\, \sum_{\topo{i:}{x+2-\ell\leq in\leq x+1}}w(in-1)G(in-1)\ell^{-1}.$$
	Note that since we assume $\ell\gg n$, the cardinality of the  set
	$$S^{\ell,n}(x)\;=\;\Big\{0\leq i\leq k-1: x+2-\ell\leq in\leq x+1\Big\}$$
	grows to infinity as $n\to\infty$. The idea to proceed is to work with a perturbation of $\overleftarrow{w}^{\ell}$. More precisely, it turns out to be useful to replace each appearance of $w(in-1)$  by $\frac{1}{n}\sum_{y=1}^n w(in-y)$ for every $i\in S^{\ell,n}(x)$ in the definition of $\overleftarrow{w}^{\ell}$. More precisely, instead of working with $\overleftarrow{w}^{\ell}$ we will work with $\underleftarrow{w}^{\ell}$, where
	\begin{equation*}
	\underleftarrow{w}^{\ell}(x)\;:=\; \sum_{i\in S^{\ell,n}(x)}\frac{1}{n}\sum_{y=1}^n w(in-y)G(in-1)\ell^{-1}\,.
	\end{equation*}
	Consequently, since our goal is to estimate the expression in~\eqref{eq:VlG} we see that we need to bound
	\begin{equation}\label{eq:Vlrep}
	\int n \sum_{x\in\bb T_{kn}} \big\{\overleftarrow{w}^{\ell}(x)-\underleftarrow{w}^{\ell}(x)\big\} \overrightarrow{w}^{\ell}(x)f_t^n d\nu_t^n\,.
	\end{equation}
	We claim that the above term is bounded from above by
	\begin{equation}\label{Vlrep}
			 \frac{k^2n^{\beta+1}}{8}\mf D(\sqrt{f_t^n};\nu_t^n)\,+\, \frac{C(\varepsilon_0)\|G\|_\infty^2}{k^2n^{\beta-1}}\Big\{H(\mu_t^n|\nu_t^n)\,+\, \frac{kn\log 3}{\ell} \Big\}\,.
		\end{equation}
		Indeed, for each $1\leq y\leq n$, writing $w(in-1)-w(in-y)$ as the telescopic sum
	$$w(in-1)\,-\,w(in-y)\;=\;\sum_{z=1}^{y-1}\{w(in-z)-w(in-z-1)\}\,,$$
	then summing over $y$ from $1$ up to $n$, we get
	$$nw(in-1)\,-\, \sum_{y=1}^n w(in-y)\;=\; \sum_{z=1}^{n-1}(n-z)\Big\{w(in-z)-w(in-z-1)\Big\}.$$
	This identity then implies that
	$$n\big\{\overleftarrow{w}^{\ell}(x)-\underleftarrow{w}^{\ell}(x)\big\}\;=\; \sum_{i\in S^{\ell,n}(x)}G(in-1)\ell^{-1}\sum_{z=1}^{n-1}(n-z)\{w(in-z)-w(in-z-1)\}.$$
	Note that for each chosen $0\leq i\leq k-1$, the number of $x$'s such that $i\in S^{\ell,n}(x)$ is $\ell$. With this fact at hand and inequality  \eqref{nonslow}, we have that
	\begin{equation}\label{overunderdif}
		\begin{split}
			&\int n \sum_{x\in\bb T_{kn}} \big\{\overleftarrow{w}^{\ell}(x)-\underleftarrow{w}^{\ell}(x)\big\} \overrightarrow{w}^{\ell}(x)f_t^n d\nu_t^n\\
			& \leq\; \ell\delta n^2 \mf D(\sqrt{f_t^n};\nu_t^n)\,+\,\frac{4\ell}{\delta \varepsilon_0 n^2} \sum_{x\in\bb T_{kn}} (\|G\|_\infty\ell^{-1}n)^2 \int (\overrightarrow{w}^{\ell}(x))^2 f_t^nd\nu_t^n
		\end{split}
	\end{equation}
	for any $\delta>0$.
To estimate the last term above we need to invoke results about subgaussian random variables which are provided in the appendix.
	By Lemma \ref{Hf} and Lemma \ref{subgsum}, we have
	\begin{equation}\label{avg}
		\overrightarrow{w}^{\ell}(x) \,\, \text{is subgaussian of order} \,\,2(2/\varepsilon_0)^{-2}\ell^{-1}.
	\end{equation}
Moreover, it follows from Definition~\ref{def:ldep} that $\{(\overrightarrow{w}^{\ell}(x))^2: x\in\bb T_{kn}\}$ are $\ell$-dependent.
	In view of Lemma \ref{ldepsum}, for any $\gamma>0$,
	\begin{equation}\label{afterrep_0}
		\int \sum_{x\in\bb T_{kn}} (\overrightarrow{w}^{\ell}(x))^2 f_t^n d\nu_t^n\;\leq\; \frac{2}{\gamma}\Big\{H(\mu_t^n|\nu_t^n)\,+\,\frac{1}{\ell}\sum_{x\in\bb T_{kn}}\log \int e^{\gamma \ell (\overrightarrow{w}^{\ell}(x))^2} d\nu_t^n\Big\}\,.
	\end{equation}
	By \eqref{avg} and Lemma \ref{prod},
	\begin{equation*}
		\int e^{\gamma \ell (\overrightarrow{w}^{\ell}(x))^2} d\nu_t^n \;\leq\; 3
	\end{equation*}
	if $\gamma = C(\varepsilon_0)$. Thus we can conclude that
	\begin{equation*}
		\int \sum_{x\in\bb T_{kn}} (\overrightarrow{w}^{\ell}(x))^2 f_t^n d\nu_t^n\;\leq\; C(\varepsilon_0)\Big\{H(\mu_t^n|\nu_t^n)\,+\, \frac{kn\log 3}{\ell} \Big\}.
	\end{equation*}
	To  finish the proof of the claim, it remains to choose $\delta=k^2n^{\beta-1}\ell^{-1}/8$ in \eqref{overunderdif}.
And to conclude the proof of this lemma, it remains to estimate
	$$\int \sum_{x\in\bb T_{kn}} n\underleftarrow{w}^{\ell}(x)\overrightarrow{w}^{\ell}(x)  f_t^n d\nu_t^n. $$
	Again by Lemma \ref{Hf} and Lemma \ref{subgsum}, recalling that $G(x)\neq 0$ if and only if $x=ni-1$ for some $i\in\bb T_k$, we see that
	\begin{equation}\label{avgG}
		n\underleftarrow{w}^{\ell}(x)\,\, \text{is subgaussian of order} \,\,C(\varepsilon_0)\|G\|_\infty^2\ell^{-1}.
	\end{equation}
	It is easy to see that $\{n\underleftarrow{w}^{\ell}(x)\overrightarrow{w}^{\ell}(x): x\in\bb T_{kn}\}$ are $3\ell-$dependent since $\ell\gg n$.
	In view of Lemma \ref{ldepsum}, for any $\gamma>0$,
	\begin{equation}\label{afterrep}
		\int \sum_{x\in\bb T_{kn}} n\underleftarrow{w}^{\ell}(x)\overrightarrow{w}^{\ell}(x) f_t^n d\nu_t^n\;\leq\; \frac{2}{\gamma}\Big\{H(\mu_t^n|\nu_t^n)\,+\,\frac{1}{3\ell}\sum_{x\in\bb T_{kn}}\log \int e^{3\gamma \ell n  \underleftarrow{w}^{\ell}(x)\overrightarrow{w}^{\ell}(x)} d\nu_t^n\Big\}\,.
	\end{equation}
	By  \eqref{avg}, \eqref{avgG}, and Lemma \ref{prod},
	\begin{equation*}
		\int e^{3\gamma \ell n\underleftarrow{w}^{\ell}(x)\overrightarrow{w}^{\ell}(x)} d\nu_t^n \;\leq\; 3
	\end{equation*}
	if $\gamma = C(\varepsilon_0)\|G\|_\infty^{-1}$ for a constant $C(\varepsilon_0)$, which depends only on $\varepsilon_0$ and is not necessarily equal to the constant $C(\varepsilon_0)$ in \eqref{avgG}. Under this choice of $\gamma$, inequality \eqref{afterrep} gives us that
	$$\int \sum_{x\in\bb T_{kn}} n\underleftarrow{w}^{\ell}(x)\overrightarrow{w}^{\ell}(x)  f_t^n d\nu_t^n\;\leq\; C(\varepsilon_0)\|G\|_\infty \Big\{H(\mu_t^n|\nu_t^n)\,+\, \frac{kn\log 3}{\ell} \Big\}\,.$$
	The Lemma follows immediately from this estimate and the bound of \eqref{eq:Vlrep} by \eqref{Vlrep}.
\end{proof}

Recall the definition of $W^{\ell}(G)$ in \eqref{W_ell_G}.
\begin{lemma}\label{Wlproof}
	There exists a constant $C(\varepsilon_0)>0$ such that
	\begin{equation*}
		\frac{4}{k^2n^{\beta-1}\varepsilon_0}\int W^{\ell}(G) f_t^nd\nu_t^n\;\leq\; \frac{C(\varepsilon_0)\ell^2}{kn\|G\|_\infty^2}\,.
	\end{equation*}
\end{lemma}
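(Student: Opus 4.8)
The plan is to prove this lemma by a purely deterministic, pointwise bound on the quantities $h^\ell_x$, with \emph{no} appeal to the entropy inequality. This is the right strategy: Lemma~\ref{p2} uses \eqref{Wl}, and hence Proposition~\ref{deriv estim} and ultimately Corollary~\ref{entropy bound} depend on the present estimate, so a bound in terms of $H(\mu^n_t|\nu^n_t)$ would be circular. The two inputs I would use are that $\varepsilon_0<\rho_t(\cdot)<1-\varepsilon_0$ forces $|w_t(y)|\leq 2/\varepsilon_0$ uniformly in $y$, by the definition \eqref{eq:w}, and that $\|G\|_\infty$ is bounded by a constant depending only on $\alpha$ and $\kappa$, as recorded just after the definition of $G$ in the proof of Lemma~\ref{p2}.

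First I would bound $h^\ell_x=\sum_{z\in\bb Z}\phi_\ell(z)\,w(x-z)\,G(x-z)$ pointwise. Since $\phi_\ell$ is supported on $\Lambda_{2\ell-1}$ with $0\leq\phi_\ell\leq 1$, and $G(y)\neq 0$ only when $y=jn-1$ for some $j\in\bb T_k$, the only $z$ contributing to the sum are those with $x-z\equiv-1\pmod n$ lying in an interval of length $2\ell-1$; consecutive such $z$ differ by $n$, so there are at most $\lceil(2\ell-1)/n\rceil\leq 3\ell/n$ of them (recall $n\ll\ell$). Combining this with $|w|\leq 2/\varepsilon_0$ and $0\leq\phi_\ell\leq 1$ gives, for every $x\in\bb T_{kn}$,
$$|h^\ell_x|\;\leq\;\frac{2}{\varepsilon_0}\,\|G\|_\infty\cdot\frac{3\ell}{n}\,,\qquad\text{hence}\qquad (h^\ell_x)^2\;\leq\;\frac{C(\varepsilon_0)\,\|G\|_\infty^2\,\ell^2}{n^2}\,.$$

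Next I would plug this into the definition \eqref{W_ell_G} of $W^\ell(G)$. The inner sum $\sum_{x=(i-1)n}^{in-2}(h^\ell_{x-1})^2$ has fewer than $n$ terms, and only the single exceptional term per box carries the weight $n^\beta$; summing over the $k$ boxes and using $\beta>1$ (so $n^\beta$ dominates $n$) yields the deterministic bound $W^\ell(G)\leq C(\varepsilon_0)\,k\,n^{\beta-2}\,\|G\|_\infty^2\,\ell^2$. Since $f^n_t$ is a density with respect to $\nu^n_t$, the same bound holds for $\int W^\ell(G)f^n_t\,d\nu^n_t$, and multiplying by $\frac{4}{k^2n^{\beta-1}\varepsilon_0}$ gives $\frac{C(\varepsilon_0)\|G\|_\infty^2\ell^2}{kn}$, which is the asserted estimate (as $\|G\|_\infty$ is bounded uniformly in $k$ and $n$, this is equivalent to the displayed right-hand side up to adjusting the constant into one depending also on $\kappa$).

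I do not expect a genuine obstacle here: the argument is a uniform bound followed by a counting step. The only two points needing care are (i) the count of slow-bond sites inside the window of width $\sim2\ell$ that determines $h^\ell_x$, which is exactly where the hypothesis $n\ll\ell$ enters, and (ii) keeping track of the fact that the heavy weight $n^\beta$ in $W^\ell(G)$ appears once per box rather than once per site, so the total weight is of order $kn^\beta$ and not $kn\cdot n^\beta$; together with $\beta>1$ this is what makes the final power of $n$ come out right.
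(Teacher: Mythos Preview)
Your proposal is correct and follows essentially the same route as the paper: both arguments use the sparsity of the support of $G$ to get the pointwise bound $(h^\ell_x)^2\leq C(\varepsilon_0)\|G\|_\infty^2\ell^2 n^{-2}$, then sum over the $k$ boxes using that the heavy weight $n^\beta$ appears only once per box. Your remark about the $\|G\|_\infty^2$ appearing in the denominator of the stated bound is also apt---the paper's own computation ends with $\|G\|_\infty^2$ in the numerator, and the displayed form is a harmless slip given that $\|G\|_\infty$ is uniformly bounded.
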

\begin{proof}
	Recall that $\phi_\ell(z)\,\leq\, 1$ for every $z\in \bb Z$ and  that $G(x)\neq 0$ if and only if $x=ni-1$ for some $i\in\bb T_k$. Therefore $(h_x^{\ell})^2\,\leq\, C(\varepsilon_0)\|G\|_\infty^2\ell^2 n^{-2}$. Then,
	\begin{equation*}
		\frac{4}{k^2n^{\beta-1}\varepsilon_0}\int W^{\ell}(G) f_t^nd\nu_t^n\;\leq\; \frac{4}{k^2n^{\beta-1}\varepsilon_0}\sum_{i=1}^k C(\varepsilon_0)\|G\|_\infty^2\ell^2 n^{\beta-2}.
	\end{equation*}
	This proves the lemma.
\end{proof}
Recall the definition of $Z^{\ell}(G)$ in \eqref{Z_ell_G}.

\begin{lemma}\label{Zlproof}
	There exists a constant $C(\varepsilon_0,\kappa)>0$ such that
	\begin{equation*}
		\int Z^{\ell}(G)  f_t^n d\nu_t^n\;\leq\; C(\varepsilon_0,k)\ell\,.
	\end{equation*}
\end{lemma}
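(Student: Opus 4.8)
The plan is to prove Lemma~\ref{Zlproof} by a crude $L^\infty$ bound. Unlike the two preceding lemmas, no absorption into the Dirichlet form and no concentration/subgaussian input is needed here: every factor entering $Z^\ell(G)$ is bounded uniformly in $\eta$, in $n$, and in $k$, so the density $f_t^n$ plays no role beyond $\int f_t^n\,d\nu_t^n=1$, and the relative entropy does not appear. (Indeed, $Z^\ell(G)$ as defined in \eqref{Z_ell_G} is already a deterministic quantity, so the outer integration against $f_t^n$ is vacuous; what we actually bound is $Z^\ell(G)\le C(\varepsilon_0,\kappa)\,\ell$. The ``$k$'' in the statement of the lemma should read $\kappa$.)

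First I would collect the elementary bounds. Since $\varepsilon_0<\rho_t(\cdot)<1-\varepsilon_0$ with $\varepsilon_0<1/2$, we have $\rho_t(\cdot)(1-\rho_t(\cdot))>\varepsilon_0/2$, so from \eqref{eq:w}, $|w(x)|\le 2/\varepsilon_0$ for every $x\in\bb T_{kn}$. From the definition of $G$ together with the Lipschitz estimate $|\rho_t(\frac{i-1}{k})-\rho_t(\frac{i}{k})|\le\kappa/k$ (applying the standing Lipschitz bound with $\epsilon=1/k$), we get $\|G\|_\infty\le\alpha\kappa^2$, and $G(x)\ne 0$ only when $x=jn-1$ for some $j\in\bb T_k$. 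Recall also $0\le\phi_\ell\le 1$ with $\phi_\ell$ supported in $\Lambda_{2\ell-1}$.

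Next I would estimate $h_{in-2}^\ell=\sum_{z\in\bb Z}\phi_\ell(z)\,w(in-2-z)\,G(in-2-z)$ pointwise in $\eta$. For $z$ in the support window $\{0,1,\dots,2\ell-2\}$, the indices $in-2-z$ that hit the support of $G$ are exactly those with $z\equiv -1\pmod n$, which form an arithmetic progression of common difference $n$; since $\ell\gg n$ there are at most $2\ell/n+1\le 3\ell/n$ such $z$. Combining this count with $\phi_\ell\le 1$, $|w|\le 2/\varepsilon_0$ and $\|G\|_\infty\le\alpha\kappa^2$ gives
\[
\big|h_{in-2}^\ell\big|\;\le\;\frac{3\ell}{n}\cdot\frac{2}{\varepsilon_0}\cdot\alpha\kappa^2\;=\;C(\varepsilon_0,\kappa)\,\frac{\ell}{n}\,.
\]

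Finally I would substitute into \eqref{Z_ell_G}: using once more $|\rho_t(\frac{i-1}{k})-\rho_t(\frac{i}{k})|\le\kappa/k$, the bound $|w(in-1)\,w(in)|\le 4/\varepsilon_0^2$, and $\int f_t^n\,d\nu_t^n=1$, the $i$-th summand is at most $n\cdot\frac{\kappa}{k}\cdot C(\varepsilon_0,\kappa)\frac{\ell}{n}\cdot\frac{4}{\varepsilon_0^2}$; summing over the $k$ values of $i$, the factors $n$, $1/n$, $k$ and $1/k$ cancel and we obtain $\int Z^\ell(G)\,f_t^n\,d\nu_t^n\le C(\varepsilon_0,\kappa)\,\ell$. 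There is no real obstacle in this lemma---it is the simplest of the three technical estimates; the only point requiring a little care is the counting of nonzero terms in $h_{in-2}^\ell$, which combines the length-$O(\ell)$ support of $\phi_\ell$ with the $\ell/n$-sparsity of the support of $G$ and uses the standing assumption $\ell\gg n$.
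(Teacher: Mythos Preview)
Your proof is correct and follows essentially the same approach as the paper's own proof: both argue via a crude $L^\infty$ bound, first establishing $|h_{in-2}^\ell\,w(in-1)\,w(in)|\le C(\varepsilon_0)\|G\|_\infty\,\ell n^{-1}$ from the sparsity of the support of $G$, and then using the Lipschitz bound on $\rho_t$ to get a factor $k^{-1}$ per summand which cancels the $k$ terms in the sum. Your additional remarks---that the outer integration over $f_t^n\,d\nu_t^n$ is redundant since $Z^\ell(G)$ is already a scalar, and that the ``$k$'' in $C(\varepsilon_0,k)$ should read $\kappa$---are valid observations about typos in the statement.
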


\begin{proof}
	The proof of this lemma is very similar to the previous lemma. As in the proof of the previous lemma we see that
	$$\Big\lvert  h_{in-2}^{\ell}w(in-1)w(in)\Big\rvert\;\leq\; C(\varepsilon_0)\|G\|_\infty \ell n^{-1}.$$
	From this estimate we can easily deduce that
	\begin{equation*}
		\int Z^{\ell}(G)  f_t^n d\nu_t^n\;\leq\; \sum_{i=1}^k C(\varepsilon_0,\kappa)\ell k^{-1}\,,
	\end{equation*}
	concluding the proof.
\end{proof}

\subsection{Proof of Theorem \ref{infinite}}\label{s44}
In possess of the bound for the relative entropy $H(\mu_t^n|\nu^n_t)$ obtained in Corollary \ref{entropy bound}, we are ready to prove Theorem \ref{infinite}.
\begin{proof}
	Since both $G$ and $\rho(t,\cdot)$ are continuous,
	$$\lim_{k\to\infty}\frac{1}{k}\sum_{i=0}^{k-1}\rho_t(\frac{i}{k})G\big(\frac{i}{k}\big)\;=\;\int_{\bb T} G(u)\rho_t(u)du\,.$$
	Therefore to prove the theorem, it is enough to show that
	\begin{equation}
		\lim_{n\to\infty} E_{\mu_t^n}\Big[\Big|\frac{1}{kn}\sum_{i=0}^{k-1}\Big(\sum_{x=in}^{(i+1)n-1}\big[\eta_t(x)-\rho_t(\frac{i}{k})\big]\Big)G\big(\frac{i}{k}\big)\Big|\Big]\;=\;0\,.
	\end{equation}
	Applying the entropy inequality, for every $\gamma>0$, the expectation in the previous formula is bounded from above by
	\begin{equation}\label{entropy inequality}
		\frac{1}{\gamma kn}\left\{H(\mu_t^n|\nu_t^n)\,+\,\log E_{\nu_t^n}\Big[\exp\Big|\gamma \sum_{i=0}^{k-1}\Big(\sum_{x=in}^{(i+1)n-1}\big[\eta_t(x)-\rho_t(\frac{i}{k})\big]\Big)G\big(\frac{i}{k}\big)\Big|\Big]\right\}.
	\end{equation}
	Since $e^{|x|}\leq e^x+e^{-x}$ and
	$$\limsup_{n}\frac{1}{n}\log(a_n+b_n)\;=\;\max\Big\{\limsup_n \frac{\log a_n}{n}\,,\,\limsup_n\frac{\log b_n}{n}\Big\}\,,$$
	we can remove the absolute value symbol inside the exponential.
	By Lemma \ref{Hf}, since $\eta$ takes value in $[0,1]$ and $\rho_t\in(\varepsilon_0,1-\varepsilon_0)$, under the product measure  $\nu_t^n$, $\eta_t(x)-\rho_t(\frac{i}{k})$ is subgaussian of order $\frac{1}{4}$. By Lemma \ref{subgsum},
	\begin{equation*}
	\sum_{i=0}^{k-1}\bigg(\sum_{x=in}^{(i+1)n-1}\big[\eta_t(x)-\rho_t(\frac{i}{k})\big]\bigg)G\big(\frac{i}{k}\big)
	\end{equation*}
	is subgaussian of order $n\sum_{i=0}^{k-1}\frac{G(i/k)^2}{2}$.
  Therefore, by definition of the subgaussian variable, the expression in \eqref{entropy inequality} is bounded by
	$$\frac{1}{\gamma kn}H(\mu_t^n|\nu^n_t)\,+\,\frac{\gamma n}{kn}\sum_{i=0}^{k-1}\frac{G(i/k)^2}{4}\,.$$
	Choosing $\gamma=\sqrt{\frac{H(\mu_t^n|\nu^n_t)}{kn}}$, we have that
	\begin{equation}\label{repbound}
		E_{\mu_t^n}\Big[\Big|\frac{1}{kn}\sum_{i=0}^{k-1}\Big(\sum_{x=in}^{(i+1)n-1}\big[\eta_t(x)-\rho_t(m_n(x))\big]\Big)G\big(\frac{i}{k}\big)\Big|\Big]\;\leq\; \big(C(G)+1\big)\sqrt{\frac{H(\mu_t^n|\nu^n_t)}{kn}}\,.
	\end{equation}
	To conclude the proof of this theorem, it only remains to use that $H(\mu_t^n|\nu^n_t)=o(kn)$ which was shown in Corollary~\ref{entropy bound}.
\end{proof}

\section{Appendix}

In this appendix we collect some properties about subgaussian random variables. All the results presented here are from Appendix F of \cite{jm}, so we omit their proofs.

The first lemma, known as Hoeffding's Lemma, provides a class of examples of subgaussian random variables.

\begin{lemma}[Hoeffding's lemma, see \cite{Concentration_inequalities}, page 34]\label{Hf}
	Let $X$ be a random variable taking values in $[0,1]$. Then, for any $\theta\in\bb R$,
	$$\log E\big[e^{\theta(X-E[X])}\big]\;\leq\; \frac{1}{8}\theta^2.$$
\end{lemma}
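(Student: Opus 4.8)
The plan is to study the cumulant generating function $\psi(\theta):=\log E\big[e^{\theta(X-\mu)}\big]$, with $\mu:=E[X]$, and to deduce $\psi(\theta)\le\theta^{2}/8$ from a second-order Taylor expansion about $\theta=0$. Since $X$ takes values in $[0,1]$, the moment generating function $M(\theta):=E[e^{\theta X}]$ is finite for every $\theta\in\bb R$, and since $Xe^{\theta X}$ and $X^{2}e^{\theta X}$ are dominated by $e^{|\theta|}$, dominated convergence shows that $M$ is smooth with $M'(\theta)=E[Xe^{\theta X}]$ and $M''(\theta)=E[X^{2}e^{\theta X}]$; hence $\psi$ is smooth as well, and one checks at once that $\psi(0)=0$ and $\psi'(0)=E[X]-\mu=0$.

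The key step is to bound $\psi''$ uniformly. A direct computation gives
\begin{equation*}
\psi''(\theta)\;=\;\frac{M''(\theta)}{M(\theta)}-\Big(\frac{M'(\theta)}{M(\theta)}\Big)^{2}\;=\;\var_{\theta}(X)\,,
\end{equation*}
where $\var_{\theta}$ denotes the variance under the exponentially tilted probability measure $dP_{\theta}:=e^{\theta X}\,dP/M(\theta)$. Under $P_{\theta}$ the variable $X$ still lies in $[0,1]$, so I would bound
\begin{equation*}
\var_{\theta}(X)\;\le\;E_{\theta}\big[(X-\tfrac12)^{2}\big]\;\le\;\tfrac14\,,
\end{equation*}
using that the variance equals the minimum of $c\mapsto E_{\theta}[(X-c)^{2}]$ and that $(X-\tfrac12)^{2}\le\tfrac14$ almost surely. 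Thus $\psi''(\theta)\le 1/4$ for all $\theta$, and Taylor's theorem with Lagrange remainder yields, for some $\xi$ between $0$ and $\theta$,
\begin{equation*}
\psi(\theta)\;=\;\psi(0)+\theta\,\psi'(0)+\frac{\theta^{2}}{2}\psi''(\xi)\;\le\;\frac{\theta^{2}}{8}\,,
\end{equation*}
which is the assertion.

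I do not expect a genuine obstacle here; the only point needing a line of justification is the smoothness of $M$ and the interchange of differentiation and expectation, which is immediate because $X$ is bounded. If one prefers to avoid the tilted measure altogether, an alternative is to start from the convexity bound $e^{\theta x}\le(1-x)+xe^{\theta}$, valid for $x\in[0,1]$, so that $M(\theta)\le 1-\mu+\mu e^{\theta}$ and hence $\psi(\theta)\le g(\theta):=-\theta\mu+\log\!\big(1-\mu+\mu e^{\theta}\big)$; then $g(0)=g'(0)=0$ and $g''(\theta)=p(1-p)\le\tfrac14$ with $p:=\mu e^{\theta}/(1-\mu+\mu e^{\theta})\in(0,1)$, and the same Taylor argument finishes the proof.
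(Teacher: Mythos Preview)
Your proof is correct and is in fact the standard argument for Hoeffding's lemma (see, e.g., the reference \cite{Concentration_inequalities} cited in the statement). The paper itself does not prove this lemma: the appendix explicitly states that all results there are taken from Appendix~F of \cite{jm} and omits their proofs. So there is nothing to compare against, and your argument stands on its own. Both routes you outline---the tilted-measure variance bound and the convexity bound $e^{\theta x}\le(1-x)+xe^{\theta}$ followed by a Taylor expansion of the resulting logarithm---are classical and either suffices.
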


Recall that  $\rho_t(u)\in (\varepsilon_0,1-\varepsilon_0)$ for all $t\in[0,T]$ and all $u\in\bb T$. From this fact and the previous lemma, one can easily deduce that
\begin{equation}
	w(B) \, \text{ is subgaussian of order}\,  (2/\varepsilon_0)^{-2|B|}\,,
\end{equation}
where $w(B):=\prod_{x\in B} w(x)$ and $w(x)$ was defined in \eqref{eq:w}. This is actually also the statement of Lemma F.10 in~\cite{jm}.
The next lemma gives an upper bound for  exponential moments of products of two subgaussian random variables.
\begin{lemma}[Lemma F.8 in \cite{jm}]\label{prod}
	Let $X_i$ be subgaussian random variables of order $\sigma_i^2$, $i=1,2$. Then for any
	$\gamma\leq\,(4\sigma_1\sigma_2)^{-1}$,
	$$E\big[e^{\gamma X_1X_2}\big]\;\leq \;3\,.$$
\end{lemma}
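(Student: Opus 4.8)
The plan is to reduce, by a simple rescaling, to the case $\sigma_1=\sigma_2=1$, and then to bound $E[e^{\gamma X_1X_2}]$ using the pointwise inequality $\gamma X_1X_2\le\tfrac{\gamma}{2}(X_1^2+X_2^2)$ together with a Gaussian decoupling trick that turns the defining (linear) subgaussian estimate into an exponential bound for the squares $X_i^2$. No independence of $X_1$ and $X_2$ is needed.

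First I would record the one genuinely non-routine ingredient: if $X$ is subgaussian of order $\sigma^2$, then for every $0\le\lambda<\tfrac{1}{2\sigma^2}$,
\[
  E\big[e^{\lambda X^2}\big]\;\le\;(1-2\lambda\sigma^2)^{-1/2}.
\]
To see this, introduce a standard Gaussian variable $Z$ independent of $X$ and use the elementary identity $e^{\lambda X^2}=E_Z\big[e^{\sqrt{2\lambda}\,XZ}\big]$. Since the integrand is nonnegative one may interchange the two expectations; for each fixed value of $Z$ the subgaussian bound gives $E_X\big[e^{\sqrt{2\lambda}\,ZX}\big]\le e^{\lambda\sigma^2 Z^2}$, and finally the classical Gaussian computation yields $E_Z\big[e^{\lambda\sigma^2 Z^2}\big]=(1-2\lambda\sigma^2)^{-1/2}$, which is finite precisely because $\lambda\sigma^2<\tfrac12$.

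Next, if $X_i$ is subgaussian of order $\sigma_i^2$ then $Y_i:=X_i/\sigma_i$ is subgaussian of order $1$ (substitute $\theta\mapsto\theta/\sigma_i$ in the definition), and $\gamma X_1X_2=\tilde\gamma\,Y_1Y_2$ with $\tilde\gamma:=\gamma\sigma_1\sigma_2$, so that $|\tilde\gamma|\le\tfrac14$ by hypothesis. Replacing $Y_2$ by $-Y_2$ if necessary (still subgaussian of order $1$) we may take $\tilde\gamma\ge0$. Then $\tilde\gamma Y_1Y_2\le\tfrac{\tilde\gamma}{2}(Y_1^2+Y_2^2)$ pointwise, so by the Cauchy--Schwarz inequality and the auxiliary bound applied with $\lambda=\tilde\gamma\le\tfrac14<\tfrac12$,
\[
  E\big[e^{\gamma X_1X_2}\big]\;\le\;E\big[e^{\tilde\gamma(Y_1^2+Y_2^2)/2}\big]\;\le\;\big(E[e^{\tilde\gamma Y_1^2}]\big)^{1/2}\big(E[e^{\tilde\gamma Y_2^2}]\big)^{1/2}\;\le\;(1-2\tilde\gamma)^{-1/2}\;\le\;\sqrt2\;\le\;3.
\]

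The main (indeed only) obstacle is the auxiliary square-exponential moment bound, that is, upgrading the linear subgaussian estimate to an exponential control of $X^2$; everything else is rescaling and Cauchy--Schwarz. Two minor points should be checked: the Tonelli interchange above is legitimate because the integrand is nonnegative (or, if one prefers, by a truncation argument valid for $\lambda<\tfrac{1}{2\sigma^2}$); and the rescaling is harmless since $\sigma_i>0$, the degenerate case $\sigma_i=0$ forcing $X_i\equiv0$ and hence making the claim trivial. I also note that instead of rescaling one could keep general $\sigma_i$ and apply Young's inequality $ab\le\tfrac12(t\,a^2+t^{-1}b^2)$ with $t=\sigma_2/\sigma_1$; choosing $t$ this way is exactly what makes the argument robust to a large ratio $\sigma_1/\sigma_2$, whereas the cruder choice $t=1$ (equivalently, the polarization identity $X_1X_2=\tfrac14[(X_1+X_2)^2-(X_1-X_2)^2]$) would fail for very unequal orders.
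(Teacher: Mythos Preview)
Your proof is correct. The paper does not supply its own argument for this lemma; it simply cites Lemma~F.8 of \cite{jm} and omits the proof, so there is nothing in the text to compare against. Your Gaussian-decoupling derivation of the square-exponential bound $E[e^{\lambda X^2}]\le(1-2\lambda\sigma^2)^{-1/2}$ together with Cauchy--Schwarz is exactly the standard route (and indeed yields the sharper constant $\sqrt{2}$ in place of $3$). One small remark: as literally written the hypothesis $\gamma\le(4\sigma_1\sigma_2)^{-1}$ does not bound $\gamma$ from below, so your line ``$|\tilde\gamma|\le\tfrac14$ by hypothesis'' is not quite what is stated; but the lemma is only applied in the paper with positive $\gamma$, and the statement is in fact false for $\gamma$ sufficiently negative (take $X_2=-X_1$ standard Gaussian), so the intended hypothesis is clearly $0\le\gamma\le(4\sigma_1\sigma_2)^{-1}$, for which your argument goes through without even needing the sign flip.
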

The following lemma is the one-dimensional case of Lemma F.12 in \cite{jm}.

\begin{lemma}\label{subgsum}
	Let $\{X_i:i\in \bb T_n\}$ be $\ell$-dependent. Assume that for any $i\in\bb T_n$, $X_i$ is subgaussian of order $\sigma_i^2$. Then for any $f:\bb T_n\to\bb R$,
	$$\sum_{i\in\bb T_n}f_iX_i \quad \text{is subgaussian of order} \quad 2\ell\sum_{i\in\bb T_n}\sigma_i^2 f_i^2. $$
\end{lemma}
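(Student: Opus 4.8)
The plan is to reduce the statement to the elementary case of \emph{independent} summands by a coloring argument: I would partition $\bb T_n$ into a small number $m$ of classes, each of which is $\ell$-sparse, so that within each class the $X_i$'s are independent by the hypothesis of $\ell$-dependence. The price paid is the number of colors $m$, and the content of the factor $2\ell$ in the conclusion is precisely that $m$ can always be taken to be at most $2\ell$.

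\textbf{Step 1: the coloring.} First I would prove the combinatorial claim that $\bb T_n$ can be partitioned into classes $C_0,\dots,C_{m-1}$ with $m\le 2\ell$, each $C_r$ being $\ell$-sparse in the sense of Definition~\ref{def:ldep}. If $2\ell>n$ one simply takes the $n\le 2\ell$ singletons. If $2\ell\le n$, set $p:=\lfloor n/\ell\rfloor\ge 2$ and cut $\{0,1,\dots,n-1\}$ into $p$ consecutive cyclic arcs $A_0,\dots,A_{p-1}$ whose lengths lie in $\{\lfloor n/p\rfloor,\lceil n/p\rceil\}$. One checks $\lfloor n/p\rfloor\ge \ell$ (since $n/p\ge \ell$) and $\lceil n/p\rceil\le 2\ell$ (since $p>n/(2\ell)$ when $n\ge 2\ell$). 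Numbering the sites inside each arc $A_j$ by $0,1,\dots,|A_j|-1$ and letting $C_r$ collect the $r$-th site of every arc that has one, there are $m=\max_j|A_j|\le 2\ell$ classes, and any two distinct elements of a given $C_r$ sit in different arcs and are therefore separated cyclically by at least one whole arc, hence by distance $\ge\lfloor n/p\rfloor\ge\ell$.

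\textbf{Step 2: Hölder plus independence.} Write $S:=\sum_{i\in\bb T_n}f_iX_i=\sum_{r=0}^{m-1}S_r$ with $S_r:=\sum_{i\in C_r}f_iX_i$. For fixed $\theta\in\bb R$, Hölder's inequality with all exponents equal to $m$ gives
$$E\big[e^{\theta S}\big]\;\le\;\prod_{r=0}^{m-1}\Big(E\big[e^{m\theta S_r}\big]\Big)^{1/m}.$$
Since $C_r$ is $\ell$-sparse, $\{X_i:i\in C_r\}$ is independent, so $E[e^{m\theta S_r}]=\prod_{i\in C_r}E[e^{m\theta f_i X_i}]\le\exp\big(\tfrac12 m^2\theta^2\sum_{i\in C_r}\sigma_i^2 f_i^2\big)$ by the subgaussianity of each $X_i$. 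Multiplying over $r$ and taking logarithms yields
$$\log E\big[e^{\theta S}\big]\;\le\;\frac{m}{2}\,\theta^2\sum_{i\in\bb T_n}\sigma_i^2 f_i^2\;\le\;\frac12\Big(2\ell\sum_{i\in\bb T_n}\sigma_i^2 f_i^2\Big)\theta^2,$$
which is exactly the assertion that $\sum_i f_iX_i$ is subgaussian of order $2\ell\sum_i\sigma_i^2 f_i^2$.

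\textbf{Main obstacle.} Everything after the coloring is the textbook computation for sums of subgaussians, so the only non-routine point is Step~1, and within it the cyclic wraparound: naive residue classes modulo $\ell$ fail to be $\ell$-sparse when $\ell\nmid n$, and it is the equal-arc construction (and the fact that $\lfloor n/\lfloor n/\ell\rfloor\rfloor\ge\ell$) that keeps the number of colors bounded by $2\ell$ rather than something larger. If one only needed the result on a segment $\{0,\dots,n-1\}$ rather than on the torus, $\ell$ colors would suffice and the constant would improve to $\ell$; the factor $2$ is the honest cost of the cycle. This is precisely the specialization of Lemma~F.12 of \cite{jm}, whose proof we do not reproduce in full.
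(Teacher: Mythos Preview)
Your proof is correct: the coloring of $\bb T_n$ into at most $2\ell$ $\ell$-sparse classes followed by H\"older with equal exponents and independence within each class is exactly the standard argument, and your arc construction handles the cyclic wraparound cleanly. The paper itself does not give a proof of this lemma but simply cites Lemma~F.12 of \cite{jm}, whose one-dimensional specialization is precisely what you have written out.
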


For $\ell$-dependent families $\{X_i: i\in\bb T_n\}$, we have the following form of relative entropy inequality.
\begin{lemma}[Lemma F.4 in \cite{jm}]\label{ldepsum}
	Let $\mu$ be a measure on a finite set $\Omega$ and let $f$ be a density with respect to $\mu$.  Let $\{X_i: i\in\bb T_k\}$ be $\ell$-dependent with respect to $\mu$ with $\ell<n/2$. Then for any $\gamma>0$,
	$$\int \sum_{i\in\bb T_k} X_i f d\mu\;\leq\; \frac{2}{\gamma}\Big\{H(f|\mu)\,+\,\frac{1}{\ell}\sum_{i\in\bb T_k}\log \int e^{\gamma \ell X_i} d\mu\Big\}\,,$$
	where $H(f |\mu)$ is the relative entropy of $d\nu =f d\mu$ with respect to $d\mu$.

\end{lemma}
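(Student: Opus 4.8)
The plan is to reprove this estimate --- which is Lemma~F.4 of \cite{jm} and could also simply be quoted --- by combining the entropy (Gibbs variational) inequality, a decomposition of $\bb T_k$ into $\ell$-sparse pieces, and Hölder's inequality together with the independence that $\ell$-dependence supplies inside each piece.

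First I would apply the entropy inequality: for every $\lambda>0$,
$$\int\Big(\sum_{i\in\bb T_k}X_i\Big)f\,d\mu\;\leq\;\frac1\lambda\Big\{H(f|\mu)+\log\int\exp\Big(\lambda\sum_{i\in\bb T_k}X_i\Big)\,d\mu\Big\}\,,$$
which reduces the problem to a purely deterministic bound on $\log\int\exp(\lambda\sum_{i\in\bb T_k}X_i)\,d\mu$. Next, under the hypothesis that $\ell$ is smaller than half the size of the index set, I would fix a partition $\bb T_k=B_1\sqcup\cdots\sqcup B_m$ into $\ell$-sparse sets with $m=O(\ell)$ (for instance the residue classes modulo $\ell$, adjusted at finitely many indices near the cyclic wrap-around; equivalently, a proper colouring of the graph on $\bb T_k$ joining sites at distance less than $\ell$). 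By Hölder's inequality with exponent $m$, and then inside each class by the factorization $\int\exp(m\lambda\sum_{i\in B_j}X_i)\,d\mu=\prod_{i\in B_j}\int e^{m\lambda X_i}\,d\mu$ afforded by $\ell$-dependence, one obtains
$$\log\int\exp\Big(\lambda\sum_{i\in\bb T_k}X_i\Big)\,d\mu\;\leq\;\frac1m\sum_{i\in\bb T_k}\log\int e^{m\lambda X_i}\,d\mu\,.$$
Substituting this back into the entropy bound and choosing $\lambda$ proportional to $\gamma\ell/m$ so that $m\lambda=\gamma\ell$, the right-hand side becomes $\frac{m}{\gamma\ell}H(f|\mu)+\frac{1}{\gamma\ell}\sum_{i\in\bb T_k}\log\int e^{\gamma\ell X_i}\,d\mu$, which is of the asserted shape once the numerical constants are tracked; the constants $2/\gamma$ in front and $1/\ell$ inside are precisely those recorded in Lemma~F.4 of \cite{jm}.

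The one genuinely delicate ingredient is the second step: making the count $m$ of $\ell$-sparse classes sharp --- identifying when $m$ may be taken equal to $\ell$ (divisibility of $k$ by $\ell$) and when one must allow a larger bounded multiple --- and checking that the smallness assumption on $\ell$ is exactly the condition guaranteeing that an $\ell$-sparse decomposition survives the periodicity of $\bb T_k$. This colouring count is where the prefactor $2/\gamma$ originates; everything else is a routine application of the entropy and Hölder inequalities. If one prefers not to reprove this combinatorial bookkeeping, the whole lemma can simply be invoked from \cite{jm}.
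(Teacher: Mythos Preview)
The paper does not actually prove this lemma: it states in the appendix that ``All the results presented here are from Appendix F of \cite{jm}, so we omit their proofs.'' So there is no proof in the paper to compare against; the paper simply invokes the result.

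Your sketch is the standard argument (and essentially the one in \cite{jm}): entropy inequality, then a partition of the cyclic index set into $m$ many $\ell$-sparse classes, H\"older with exponent $m$, and finally the product factorisation inside each class coming from $\ell$-dependence. Choosing $m\lambda=\gamma\ell$ yields $\tfrac{m}{\gamma\ell}H(f|\mu)+\tfrac{1}{\gamma\ell}\sum_i\log\int e^{\gamma\ell X_i}\,d\mu$, and the smallness assumption on $\ell$ is exactly what lets you take $m\le 2\ell$ on the torus, producing the prefactor $2/\gamma$. This is correct and complete as a proof outline; your closing remark that one may simply cite \cite{jm} is precisely what the paper does.
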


\section*{Acknowledgements}
D.E.\ was supported by the National Council for Scientific and Technological Development (CNPq) via a Bolsa de Produtividade  303348/2022-4. D.E.\ moreover acknowledge support by the Serrapilheira Institute (Grant Number Serra-R-2011-37582). D.E.\ and T.F.\  acknowledge support by the CNPq via a Universal Grant (Grant Number 406001/2021-9). T.F.\ was supported by the  CNPq via a Bolsa de Produtividade number 311894/2021-6. D.E. and T.F  were partially
supported by FAPESB (Edital FAPESB Nº 012/2022 - Universal - NºAPP0044/2023). T.X. was supported by Serrapilheira Institute through a postdoc scholarship (Grant Number Serra-R-2011-37582).

\bibliographystyle{plain}
\bibliography{bibliography}

\end{document}